\newtheorem{thm}{Theorem}[section]
\newtheorem{lemma}[thm]{Lemma}
\newtheorem{proposition}[thm]{Proposition}
\newtheorem{definition}[thm]{Definition}
\newtheorem{corollary}[thm]{Corollary}
\newcommand{\p}{\mathbb{P}}
\newcommand{\q}{\mathbb{Q}}
\newcommand{\R}{\mathbb{R}}
\newcommand{\ot}{\mathrm{ot}}
\newcommand{\dom}{\mathrm{dom}}
\newcommand{\h}{\mathrm{ht}}
\newcommand{\restrict}{\upharpoonright}
\newcommand{\pr}{\mathrm{pr}}
\begin{document}

\title{Entangledness in Suslin lines and trees}

\author{John Krueger}

\address{John Krueger \\ Department of Mathematics \\ 
	University of North Texas \\
	1155 Union Circle \#311430 \\
	Denton, TX 76203}
\email{jkrueger@unt.edu}

\date{October 2019; revised March 2020}

\thanks{2010 \emph{Mathematics Subject Classification}: 
	Primary 03E05, 03E35; Secondary 06A07.}

\thanks{\emph{Key words and phrases}: Suslin line, Suslin tree, entangled, weakly entangled, free. }

\thanks{This material is based upon work supported by the National Science Foundation under Grant
	No. DMS-1464859.}

\begin{abstract}
We introduce the idea of a weakly entangled linear order, and show that it is 
consistent for a Suslin line to be weakly entangled. 
We generalize the notion of entangled linear orders to $\omega_1$-trees, and prove 
that an $\omega_1$-tree is entangled iff it is free. 
We force the existence of a Suslin tree which is $n$-entangled, but all of whose 
derived trees of dimension $n+1$ are special, for any positive $n < \omega$.
\end{abstract}

\maketitle

This article is concerned with the property of entangledness in Suslin lines and trees. 
The idea of an entangled linear order was originally introduced by Abraham and Shelah \cite{AS} 
in the context of $\omega_1$-dense sets of reals. 
Recall that an uncountable linear order $L$ is \emph{$n$-entangled}, where $n$ is a positive integer, 
if for any pairwise disjoint sequence $\langle (a_{\xi,0},\ldots,a_{\xi,n-1}) : \xi < \omega_1 \rangle$ 
of increasing $n$-tuples of $L$ and any function $g : n \to 2$, there exist $\xi, \beta < \omega_1$ 
such that for all $i < n$, $a_{\xi,i} <_L a_{\beta,i}$ 
iff $g(i) = 1$.\footnote{We say $n$-tuples $(a_{0},\ldots,a_{n-1})$ and $(b_0,\ldots,b_{n-1})$ are \emph{disjoint} if 
$a_i \ne b_j$ for all $i, j < n$.} 
And $L$ is \emph{entangled} if it is $n$-entangled for all positive integers $n$.

The concept of entangledness is closely tied to topological properties of the linear order $L$. 
For example, if $L$ is $2$-entangled then $L$ has the 
countable chain condition, and if $L$ is $3$-entangled 
then $L$ is separable. 
So any $3$-entangled dense linear order is order isomorphic 
to a set of reals. 
Todorcevic \cite{stevoentangled} proved that if there exists an entangled linear order, 
then there exist c.c.c.\ forcing posets $\p$ and $\q$ such that $\p \times \q$ is not c.c.c. 
It follows that Martin's axiom together with the negation of the continuum hypothesis implies that there does not 
exist an entangled linear order.

Recall that a Suslin line is a linear order with the 
countable chain condition which is not separable. 
By the remarks above, a Suslin line cannot be $3$-entangled. 
In this article we introduce a natural weakening of the 
property of entangledness which can consistently 
be satisfied by a Suslin line. 
For any positive integer $n$, we say that a linear order $L$ is \emph{weakly $n$-entangled} if the property 
described in the first paragraph holds, except only for pairwise disjoint sequences 
$\langle (a_{\xi,0},\ldots,a_{\xi,n-1}) : \xi < \omega_1 \rangle$ 
of increasing $n$-tuples of $L$ which have the property that there exist 
$c_0 <_L \ldots <_L c_{n-1}$ such that for all $\xi < \omega_1$ and $i < n-1$, 
$a_{\xi,i} <_L c_i <_L a_{\xi,i+1}$.

We will prove that it is consistent for a Suslin line 
to be weakly $n$-entangled for all positive integers $n$. 
Any dense c.c.c.\ linear order $L$ 
is weakly $2$-entangled iff it is $2$-entangled, so it 
is consistent for a Suslin line to be $2$-entangled. 
However, this equivalence fails if $L$ is not dense.
If $L$ is dense and separable, then $L$ is $n$-entangled iff $L$ is weakly $n$-entangled. 
Thus, we have found a natural weakening of entangledness which coincides with 
entangledness for dense separable linear orders, but can consistently be satisfied 
by Suslin lines.

It is a reasonable question to ask whether the concept of entangledness 
has any significance for partial orders other than 
linear orders. 
In this article, we introduce a natural definition  
of entangledness in the class of $\omega_1$-trees. 
Recall that an $\omega_1$-tree is a tree of height $\omega_1$ 
all of whose levels are countable, and a Suslin tree is an $\omega_1$-tree 
which has no uncountable chains or antichains. 
As is well-known, there exists a Suslin line iff there exists a Suslin tree.

Let $(T,<_T)$ be an $\omega_1$-tree. 
For any distinct nodes $x$ and $y$ of $T$, define 
$\Delta(x,y)$ as the order type of the set $\{ z \in T : z <_T x, y \}$. 
For any positive integer $n$, we say that an $\omega_1$-tree $T$ is \emph{$n$-entangled} if 
for all sequences $\langle (a_{\xi,0}, \ldots, a_{\xi,n-1}) : \xi < \omega_1 \rangle$ of 
injective $n$-tuples which satisfy that the set of ordinals 
$$
\{ \Delta(a_{\xi,i},a_{\xi,j}) : i < j < n, 
\ \xi < \omega_1 \}
$$ 
is bounded in $\omega_1$, for all $g : n \to 2$ there exist $\xi < \beta < \omega_1$ such that 
for all $i < n$, $a_{\xi,i} <_T a_{\beta,i}$ iff $g(i) = 1$. 
The restriction on $\Delta$ is required, since any Suslin tree fails to have the property without 
this restriction. 
We will see that an $\omega_1$-tree $T$ is $1$-entangled 
iff it is Suslin, and more generally, $T$ is 
$n$-entangled iff 
all of its derived trees of dimension $n$ are Suslin. 
We will also prove that for any positive integer $n$, 
it is consistent for a Suslin 
tree to be $n$-entangled, but all of its derived trees 
of dimension $n+1$ are special.

\section{Background}

A \emph{tree} is a set $T$ together with a strict partial ordering, which we will always denote by $<_T$, 
such that for any node $x$ 
of $T$, the set of nodes below $x$ is well-ordered by $<_T$. 
To simplify notation, we will identify $T$ with $(T,<_T)$. 
The order type of the set of nodes below a given node $x$ is the \emph{height} of $x$, denoted by $\h_T(x)$. 
The \emph{height} of $T$ is the least ordinal $\gamma$ such that there are no nodes in $T$ of height $\gamma$. 
Let $T_\beta$ denote the set of nodes of $T$ of height $\beta$, which is called \emph{level $\beta$ of $T$}, 
and let $T \restrict \gamma := \bigcup \{ T_\beta : \beta < \gamma \}$. 
If $x <_T y$ and $\h_T(y) = \h_T(x) + 1$, we say that 
$y$ is an \emph{immediate successor} of $x$.

Nodes $x$ and $y$ of $T$ are \emph{comparable} if 
either $x = y$, $x <_T y$, or $y <_T x$, and otherwise 
they are \emph{incomparable}. 
A \emph{chain} of $T$ is a set of pairwise comparable nodes, and an \emph{antichain} of $T$ is a 
set of pairwise incomparable nodes. 
A node of $T$ is \emph{terminal} if it has nothing above it in $T$, and otherwise is \emph{non-terminal}. 
A \emph{branch} is a maximal chain. 
A subset of $T$ is \emph{cofinal} if the heights of its nodes constitutes a cofinal subset of the height of $T$. 
A tree of height $\omega_1$ whose levels are countable is an \emph{$\omega_1$-tree}. 

A tree $T$ is \emph{normal} if (a) for every node $x$ in $T$ such that $\h_T(x)+1$ is less than 
the height of $T$, $x$ has incomparable nodes above it, 
(b) every node of $T$ has nodes comparable to it at any 
height less than the height of the tree, 
and (c) any two nodes of the same limit ordinal height have a different set of nodes below them. 
An \emph{Aronszajn tree} is an $\omega_1$-tree with no cofinal branch, or equivalently, 
no uncountable chain. 
A \emph{Suslin tree} is an $\omega_1$-tree with no uncountable chain and no uncountable antichain. 
If $T$ is a normal $\omega_1$-tree, then $T$ is Suslin iff $T$ has no uncountable antichain. 
An $\omega_1$-tree $T$ is \emph{special} if there exists a function $f : T \to \omega$ such that 
$x <_T y$ implies $f(x) \ne f(y)$. 
It is not hard to see that any special $\omega_1$-tree is an Aronszajn tree 
and is not a Suslin tree. 
A \emph{subtree} of a tree is any subset of the tree equipped with the same partial ordering. 
Any subtree is easily a tree. 
For all $a \in T$, $T_a$ denotes the subtree of $T$ consisting of all nodes $b$ such that $a \le_T b$. 
If $U$ is an uncountable subtree of $T$, then 
if $T$ is either Aronszajn, special, or Suslin, then $U$ is Aronszajn, special, or Suslin respectively. 

Let $T$ be a tree. 
If $x \in T$ and $\beta \le \h_T(x)$, let $\pr_T(x,\beta)$ denote the unique node less than or equal to $x$ 
of height $\beta$. 
Define a function $\Delta$ on pairs of nodes by 
$$
\Delta(x,y) := \ot \{ z : z \le_T x, y \},
$$
where $\ot$ denotes order type with respect to $<_T$. 
If $x \le_T y$, then $\Delta(x,y) = \h_T(x)+1$. 
If $x$ and $y$ are incomparable, 
then $\Delta(x,y)$ is the least ordinal $\beta$ less than or equal to 
the heights of $x$ and $y$ such that 
$\pr_T(x,\beta) \ne \pr_T(y,\beta)$. 
Note that if $T$ is normal then the set $\{ z : z \le_T x,y \}$ has a maximum 
element, and the height of that maximum element is equal to $\Delta(x,y) - 1$. 
In particular, if $T$ is normal then $\Delta(x,y)$ 
is a successor ordinal.

Assume that $T$ is an $\omega_1$-tree. 
The natural forcing for adding a cofinal branch of $T$ is the forcing $\p_T$ whose 
underlying set is $T$ and partially ordered by $b \le_{\p_T} a$ if $a \le_T b$ for all 
$a$ and $b$ in $T$. 
Note that a subset of $T$ is an antichain of $T$ in the sense of trees 
iff it is an antichain of $\p_T$ in the sense of forcings. 
Thus, $T$ is a Suslin tree iff $\p_T$ is $\omega_1$-c.c. 
Forcing with $T$ means taking a generic extension by the forcing $\p_T$.
 
For each positive integer $n$, let $T^n$ denote the set of all $n$-tuples 
of nodes of $T$ each of the same height, partially ordered by 
$(a_0,\ldots,a_{n-1}) <_{T^n} (b_0,\ldots,b_{n-1})$ if for all $i < n$, $a_i <_T b_i$. 
It is easy to verify that $T^n$ is an $\omega_1$-tree. 
Also, if $T$ is normal, Aronszajn, or special, then $T^n$ is normal, Aronszajn, or special respectively. 
It is not necessarily true, however, that $T$ being Suslin implies that $T^n$ is Suslin. 
By a notation introduced above, if $\vec a \in T^n$, then $T_{\vec a}$ is the subtree of $T^n$ consisting 
of all $n$-tuples $\vec b$ such that $\vec a \le_{T^n} \vec b$. 
We are mostly interested in $T_{\vec a}$ when $\vec a$ is \emph{injective}, meaning that its 
elements are all distinct. 
A tree of the form $T_{\vec a}$, where $\vec a \in T^n$ is injective, is called a 
\emph{derived tree of $T$ of dimension $n$}.

\begin{lemma}
	Assume that $T$ is a normal Suslin tree. 
	Let $2 \le n < \omega$. 
	Then for all injective $\vec a = (a_0,\ldots,a_{n-1}) \in T^n$, 
	if $T_{\vec a}$ is Suslin, then for all $m < n-1$, $T_{(a_0,\ldots,a_m)}$ forces 
	that $T_{(a_{m+1},\ldots,a_{n-1})}$ is Suslin.
\end{lemma}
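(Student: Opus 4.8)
The plan is to realize $\p_{T_{\vec a}}$ as a two-step iteration $\p_{T_{\vec a_0}} * \dot\q$, where $\vec a_0 = (a_0,\ldots,a_m)$ and $\dot\q$ is forced to be isomorphic to $\p_{T_{\vec a_1}}$ for $\vec a_1 = (a_{m+1},\ldots,a_{n-1})$, and then to invoke the standard fact that a two-step iteration $\p * \dot\q$ is c.c.c.\ only if $\Vdash_{\p}$ ``$\dot\q$ is c.c.c.''

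I would begin with the bookkeeping. Each of $T_{\vec a_0}$ and $T_{\vec a_1}$ is a derived tree of the normal $\omega_1$-tree $T$, hence is itself a normal $\omega_1$-tree, and the tree ordering on each is the same in $V$ as in any generic extension. The coordinate map $\rho(\vec b) = (b_0,\ldots,b_m)$ is an order-preserving surjection of $T_{\vec a}$ onto $T_{\vec a_0}$---surjectivity uses normality of $T$ to extend $a_{m+1},\ldots,a_{n-1}$ to any desired height---so an uncountable antichain of $T_{\vec a_0}$ would lift, by choosing preimages, to one of $T_{\vec a}$. Since $T_{\vec a}$ is Suslin, $T_{\vec a_0}$ therefore has no uncountable antichain, i.e.\ $\p_{T_{\vec a_0}}$ is c.c.c.; in particular forcing with $T_{\vec a_0}$ preserves $\omega_1$, so $T_{\vec a_1}$ is still an $\omega_1$-tree in the extension, and it is still normal there. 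By the characterization of Suslinness for normal $\omega_1$-trees recalled in the Background section, it thus suffices to prove that $T_{\vec a_1}$ has no uncountable antichain after forcing with $T_{\vec a_0}$, equivalently that $\Vdash_{\p_{T_{\vec a_0}}}$ ``$\p_{T_{\vec a_1}}$ is c.c.c.''

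For the main point I would check that $\rho$, read as a map $\p_{T_{\vec a}} \to \p_{T_{\vec a_0}}$ of forcings, is a projection: given $\vec b \in T_{\vec a}$ and an extension $\vec c$ of $(b_0,\ldots,b_m)$ in $T_{\vec a_0}$, use normality of $T$ to extend $b_{m+1},\ldots,b_{n-1}$ to the height of $\vec c$ and concatenate with $\vec c$; the result is a node of $T_{\vec a}$ extending $\vec b$ that projects to $\vec c$. Hence $\p_{T_{\vec a}} \cong \p_{T_{\vec a_0}} * \dot\q$, where $\dot\q$ names the quotient $\p_{T_{\vec a}}/\dot G$. If $G$ is $\p_{T_{\vec a_0}}$-generic, then $G$ is a cofinal branch of $T_{\vec a_0}$, and I claim the map $\vec b \mapsto (b_{m+1},\ldots,b_{n-1})$ is an isomorphism of the quotient onto $\p_{T_{\vec a_1}}$ as computed in $V[G]$: it is a bijection because among conditions of a fixed height the first $m+1$ coordinates are pinned down by $G$, and it is both order-preserving and order-reflecting because comparability in the first $m+1$ coordinates of two nodes of the branch $G$ is automatic once the heights are comparable. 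Since $T_{\vec a}$ is Suslin, $\p_{T_{\vec a}}$ is c.c.c., so the fact quoted above gives $\Vdash_{\p_{T_{\vec a_0}}}$ ``$\p_{T_{\vec a_1}}$ is c.c.c.'', completing the proof.

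The only points needing care, I expect, are the height-matching in the projection and quotient computations, and remembering that ``no uncountable antichain'' delivers Suslinness only through normality, via the Background characterization. One can also bypass the quotient computation by a direct argument: given $p \Vdash$ ``$\dot A$ is an uncountable antichain of $T_{\vec a_1}$'', choose in $V$ conditions $q_\xi \le p$ of strictly increasing heights and tuples $\vec d_\xi \in T_{\vec a_1}$ of height above that of $q_\xi$ with $q_\xi \Vdash \vec d_\xi \in \dot A$, extend each $q_\xi$ (in the tree) to the height of $\vec d_\xi$, and concatenate with $\vec d_\xi$; the resulting tuples lie in $T_{\vec a}$ and, because any comparability among them would let a single condition force two comparable members into $\dot A$, they form an uncountable antichain of $T_{\vec a}$, a contradiction.
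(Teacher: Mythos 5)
Your argument is correct. The paper takes a related but slicker route: instead of factoring $\p_{T_{\vec a}}$ as a two-step iteration $\p_{T_{\vec a_0}} * \dot\q$ via a projection and then identifying the quotient $\dot\q$ with $\p_{T_{\vec a_1}}$ in the extension, it observes that the coordinate-splitting map $\vec b \mapsto ((b_0,\ldots,b_m),(b_{m+1},\ldots,b_{n-1}))$ is a dense embedding of $\p_{T_{\vec a}}$ into the \emph{product} $\p_{T_{\vec a_0}} \times \p_{T_{\vec a_1}}$ (density is exactly your height-matching observation from normality), and then quotes the standard fact that a product $\p \times \q$ is $\omega_1$-c.c.\ iff $\p$ is $\omega_1$-c.c.\ and $\p$ forces that $\q$ is $\omega_1$-c.c. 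This yields the same conclusion with no quotient analysis at all: the second factor of the product is literally $\p_{T_{\vec a_1}}$ as computed in the ground model, so there is nothing to identify after forcing. Your version needs the extra (correct, but more delicate) verifications that $\rho$ is a projection and that the quotient forcing is isomorphic to $\p_{T_{\vec a_1}}$ in $V[G]$ --- in particular the observation that the generic branch pins down the first $m+1$ coordinates, which is where the work hides; the payoff is that the iteration picture matches the shape of the statement more directly. Your closing direct argument, lifting a name for an uncountable antichain of $T_{\vec a_1}$ to an uncountable antichain of $T_{\vec a}$ by concatenating tree-extensions of the forcing conditions with the decided tuples, is also sound and is essentially a hands-on unwinding of either decomposition; it is in fact the same style of argument the paper uses elsewhere (e.g.\ in Proposition 2.10).
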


\begin{proof}
	Assume that $T_{\vec a}$ is Suslin and $m < n-1$. 
	Let $\p$, $\q$, and $\R$ be the natural forcings for adding a cofinal branch of the trees 
	$T_{\vec a}$, $T_{(a_0,\ldots,a_m)}$, and $T_{(a_{m+1},\ldots,a_{n-1})}$ respectively. 
	Since $T_{\vec a}$ is Suslin, $\p$ is $\omega_1$-c.c. 
	Now a standard result in forcing is that a product $\p \times \q$ of two forcings $\p$ and $\q$ 
	is $\omega_1$-c.c.\ iff $\p$ is $\omega_1$-c.c.\ and $\p$ forces that $\q$ is $\omega_1$-c.c. 
	So it suffices to show that $\q \times \R$ is $\omega_1$-c.c.
	
	We claim that $\q \times \R$ is forcing equivalent to $\p$, and hence is $\omega_1$-c.c., which 
	finishes the proof. 
	Given $(\vec c,\vec d)$ in the product $\q \times \R$, by normality we can find $\vec e$ and 
	$\vec f$ above $\vec c$ and $\vec d$ respectively such that the elements of $\vec e$ and $\vec f$ 
	all have the same height. 
	It easily follows from this observation that the map which sends 
	$(b_0,\ldots,b_{n-1})$ to the pair $((b_0,\ldots,b_{m}), (b_{m+1},\ldots,b_{n-1}))$ is a dense 
	embedding of $\p$ into $\q \times \R$.
	\end{proof}

\begin{definition}[\cite{devlin}]
	Let $T$ be an $\omega_1$-tree. 
	For each $1 \le n < \omega$, $T$ is \emph{$n$-free} if every derived tree of $T$ 
	of dimension $n$ is Suslin. 
	The tree $T$ is \emph{free} if it is $n$-free for every $1 \le n < \omega$.
\end{definition}

Note that $T$ is $1$-free iff $T$ is Suslin. 
It is easy to see that if $1 \le n < m$ and $T$ is $m$-free, then $T$ is $n$-free.

\nocite{devlin}

\section{Suslin Lines and Trees}

In this section we will develop some additional background material which is needed for our main results. 
We will discuss Suslin lines and trees, a forcing for adding a Suslin tree with finite conditions, 
and prove some useful technical lemmas which we will use later. 
Most of the results in this section are well-known.

\begin{definition}
	Let $L$ be a linear order.
	\begin{enumerate}
		\item $L$ is \emph{dense} if for all $a <_L b$, there exists $c$ such that 
		$a <_L c <_L b$;
		\item $L$ has the \emph{countable chain condition}, or is \emph{c.c.c.}, 
		if there does not exist an uncountable collection of 
		pairwise disjoint nonempty open intervals;
		\item $L$ is \emph{separable} if there exists a countable set $D \subseteq L$ such that for all 
		$a <_L b$, if there exists $c$ such that $a <_L c <_L b$, then there exists $d \in D$ 
		such that $a <_L d <_L b$.
		\end{enumerate}
	\end{definition}

Observe that if a linear order is c.c.c., then it does not have any 
uncountable strictly increasing or decreasing sequences. 
Also, any separable linear order is c.c.c.

\begin{definition}
	A \emph{Suslin line} is a linear order which is c.c.c.\ but not separable.
	\end{definition}

Assume that $T$ is a normal Suslin tree such that the set of immediate successors of any node 
in $T$ is equipped with a linear ordering isomorphic to the rationals. 
For any node $x$, denote this ordering of the successors of $x$ by $<_x$. 
We will refer to such a tree as a tree with \emph{rational successors}. 
For any normal Suslin tree $T$ with rational successors, 
we define a linear order $S$, called the \emph{lexicographical order} of $T$, 
as follows.

Consider distinct nodes $a$ and $b$ of $T$. 
If $a <_T b$, then we define $a <_S b$, and if $b <_T a$, then we define $b <_S a$. 
Now assume that $a$ and $b$ are incomparable in $T$. 
Since $T$ is normal, there exists a largest ordinal $\gamma$ strictly less than 
the heights of $a$ and $b$ such that $x := \pr_T(a,\gamma)$ is equal to $\pr_T(b,\gamma)$, 
namely, $\gamma = \Delta(a,b) - 1$. 
Let $y := \pr_T(a,\gamma+1)$ and $z := \pr_T(b,\gamma+1)$. 
Then $y$ and $z$ are distinct immediate successors of $x$. 
Define $a <_S b$ if $y <_x z$, and $b <_S a$ if $z <_x y$. 
A routine argument which splits into many different cases shows that $S$ is transitive, 
and it is clearly total and irreflexive. 
Hence, $S$ is a linear order. 
One can also easily show that $S$ is dense.

The equivalence between the existence of a Suslin line and the existence of a 
Suslin tree was originally proved in \cite{miller}.

\begin{lemma}
	Assume that $T$ is a normal Suslin tree with rational successors. 
	Let $S$ be the lexicographical order of $T$. 
	Then $S$ is a Suslin line.
\end{lemma}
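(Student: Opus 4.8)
I would prove both halves of the claim --- that $S$ is c.c.c.\ and that $S$ is not separable --- and both rest on one observation about how the cones of $T$ sit inside $S$. For a node $t$, write $T_t = \{ s \in T : t \le_T s \}$. The observation is: $t$ is the $<_S$-least element of $T_t$, and if $y$ is any node with $y \notin T_t$, then all of $T_t$ lies strictly on one $<_S$-side of $y$ --- the same side for every element of $T_t$. This is a short case analysis: $t \le_T s$ gives $t \le_S s$, so $t = \min_{<_S} T_t$; and if $y \notin T_t$ then either $y <_T t$, whence $y <_T s$ and so $y <_S s$ for every $s \in T_t$, or $y$ is $T$-incomparable to $t$, whence for every $s \in T_t$ we have $\Delta(y,s) = \Delta(y,t)$ and $\pr_T(s,\Delta(y,t)) = \pr_T(t,\Delta(y,t))$, so by the definition of $<_S$ the truth value of ``$y <_S s$'' does not depend on $s \in T_t$. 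In particular $T_t \subseteq (a,b)$ whenever $a <_S t <_S b$ and $a,b \notin T_t$.

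For non-separability, let $D \subseteq S$ be countable; I must exhibit a nonempty open interval of $S$ disjoint from $D$. Pick $\alpha < \omega_1$ with $\h_T(d) < \alpha$ for all $d \in D$, choose any $x \in T_\alpha$ (nonempty by normality), and let $y <_x z$ be two immediate successors of $x$. First, $(y,z)$ is nonempty: by normality $y$ has a proper extension $y' >_T y$, and then $y <_S y' <_S z$, since $y'$ and $z$ agree exactly up to $x$ and then continue through $y$ and through $z$ respectively with $y <_x z$. Second, no $d \in D$ lies in $(y,z)$: since $\h_T(d) < \alpha$, $d$ is either below $x$ or $T$-incomparable to $x$, and in either case $d$ relates to $y$ and to $z$ in exactly the same way under $<_S$ (if $d <_T x$ then $d <_S y$ and $d <_S z$; if $d$ is $T$-incomparable to $x$ then $\Delta(d,y) = \Delta(d,x) = \Delta(d,z)$ with a common branching node, so $d <_S y$ iff $d <_S z$). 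Hence $(y,z)$ witnesses that $S$ is not separable.

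For the countable chain condition, suppose for contradiction that $\langle (a_\xi,b_\xi) : \xi < \omega_1 \rangle$ is a sequence of pairwise disjoint nonempty open intervals of $S$. The key step is to find, for each $\xi$, a node $t_\xi$ with $T_{t_\xi} \subseteq (a_\xi,b_\xi)$. To do so, use density of $S$ (established just above) to pick $a_\xi <_S c_1 <_S c_2 <_S b_\xi$; let $u = \max\{z : z \le_T c_1,\ z \le_T c_2\}$ (which exists by normality) and let $v_2 = \pr_T(c_2,\h_T(u)+1)$, an immediate successor of $u$. If $c_1$ and $c_2$ are $T$-incomparable, also let $v_1 = \pr_T(c_1,\h_T(u)+1)$, so $v_1 \ne v_2$ and $v_1 <_u v_2$ since $c_1 <_S c_2$; if $c_1$ and $c_2$ are $T$-comparable then necessarily $c_1 <_T c_2$, so $u = c_1$. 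Since the immediate successors of $u$ are order-isomorphic to the rationals, there is an immediate successor $w$ of $u$ with $v_1 <_u w <_u v_2$ in the incomparable case, or simply $w <_u v_2$ in the comparable case. A direct computation with $\Delta$, exactly as in the first paragraph, shows that every $s \in T_w$ satisfies $c_1 <_S s <_S c_2$, so $T_w \subseteq (c_1,c_2) \subseteq (a_\xi,b_\xi)$ and we may take $t_\xi := w$. Finally, the $t_\xi$ are pairwise $T$-incomparable and distinct: if $t_\xi \le_T t_\eta$ with $\xi \ne \eta$ (or $t_\xi = t_\eta$), then $t_\eta \in T_{t_\eta} \subseteq T_{t_\xi}$, so $t_\eta$ would lie in $(a_\xi,b_\xi) \cap (a_\eta,b_\eta)$, contradicting disjointness. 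Thus $\{ t_\xi : \xi < \omega_1 \}$ is an uncountable antichain of $T$, contradicting that $T$ is Suslin.

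The facts that $S$ is a dense linear order are already in hand, so the only genuine work is the $\Delta$-bookkeeping. The step I expect to be the main obstacle is producing the cone $T_{t_\xi}$ inside a given nonempty open interval: one has to choose the interior points $c_1,c_2$ correctly, split on their $T$-comparability, and it is here that the hypothesis of rational (hence densely ordered) successors is used essentially, since that is exactly what provides the branching node $w$ strictly between $v_1$ and $v_2$. The convexity-style observation of the first paragraph and the non-separability argument are comparatively mechanical once the right level $\alpha$, respectively the right points $c_1,c_2$, have been picked.
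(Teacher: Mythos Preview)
Your proof is correct and follows essentially the same approach as the paper: for non-separability you pick a node $x$ above the heights of $D$ and use two immediate successors of $x$, and for c.c.c.\ you locate a cone $T_{t_\xi}$ inside each interval and argue that the $t_\xi$ form an uncountable antichain. The only difference is cosmetic: the paper finds the cone directly from the endpoints $a_\xi, b_\xi$ (splitting on whether $a_\xi <_T b_\xi$ or they are incomparable), whereas you first pass to interior points $c_1 <_S c_2$ via density --- an unnecessary but harmless detour, since the same case split applies to $a_\xi, b_\xi$ themselves.
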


\begin{proof}
	We have already established that $S$ is a linear order. 
	To see that $S$ is not separable, consider a countable set $D \subseteq S$. 
	Then we can find $x \in T$ such that for all $a \in D$, $\h_T(a) < \h_T(x)$. 
	Pick $y$ and $z$ which are immediate successors of $x$ such that $y <_x z$. 
	Then for all $a \in T \restrict \h_T(x)$, $a <_T y$ iff $a <_T x$ iff $a <_T z$. 
	It easily follows that there is no member of $D$ which is in between $y$ and $z$.
	
	To see that $S$ is c.c.c, we first observe that for all $a <_S b$, 
	there exists $x \in T$ such that for all $y \ge_T x$, $a <_S y <_S b$. 
	Namely, if $a <_T b$, then let $x$ be an immediate successor of $a$ such that 
	$x <_a \pr_T(b,\h_T(x)+1)$. 
	Suppose that $a$ and $b$ are incomparable. 
	Choose $x$ as an 
	immediate successor of $c := \pr_T(a,\Delta(a,b)-1) = 
	\pr_T(b,\Delta(a,b)-1)$ such that 
	$$
	\pr_T(a,\Delta(a,b)) <_{c} x <_c \pr_T(b,\Delta(a,b)).
	$$
	Now assume that $\{ (a_i,b_i) : i < \omega_1 \}$ is an uncountable family of 
	pairwise disjoint open intervals of $S$. 
	For each $i < \omega_1$ choose $x_i$ such that 
	for all $y \ge_T x_i$, $a_i <_S y <_S b_i$. 
	Then the disjointness of the open intervals easily implies that 
	$\{ x_i : i < \omega_1 \}$ is an antichain of $T$, which contradicts that $T$ 
	is Suslin.
\end{proof}

\begin{lemma}
	Assume that $T$ is a normal Suslin tree with rational successors. 
	Let $S$ be the lexicographical order of $T$. 
	Suppose that $a$, $b$, and $c$ are nodes of $T$ satisfying: 
	\begin{enumerate}
		\item $\h_T(c) < \h_T(a), \h_T(b)$;
		\item $a <_S c <_S b$.
		\end{enumerate}
	Then for all $d$, if $d \le_T a,b$, then $\h_T(d) < \h_T(c)$. 
	In particular, $a$ and $b$ are incomparable in $T$.
	\end{lemma}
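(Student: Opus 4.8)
The plan is to argue by contradiction. Write $\gamma := \h_T(c)$, suppose that $d \le_T a, b$ with $\h_T(d) \ge \gamma$, and derive a contradiction by producing two incompatible statements about a single rational ordering $<_x$ on a set of immediate successors. The starting observation is that since $d \le_T a$ and $d \le_T b$ and $\gamma \le \h_T(d)$, the nodes $a$ and $b$ have the same predecessor at every height $\le \gamma$; in particular $\pr_T(a,\beta) = \pr_T(b,\beta)$ for all $\beta \le \gamma$, and $\pr_T(a,\gamma) = \pr_T(b,\gamma) = \pr_T(d,\gamma)$.

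First I would unwind what $a <_S c$ says. Since $\h_T(a) > \h_T(c)$ we cannot have $a <_T c$, and $a <_S c$ rules out $c <_T a$; hence $a$ and $c$ are incomparable in $T$. Let $\eta := \Delta(a,c) - 1$. Because $a$ and $c$ are incomparable and $\h_T(c) = \gamma$, the set $\{ z : z \le_T a, c \}$ (which has a maximum by normality) cannot contain a node of height $\gamma$, for such a node would be $\pr_T(c,\gamma) = c$, forcing $c \le_T a$; thus $\eta < \gamma$. By the definition of the lexicographic order, $\pr_T(a,\eta+1) <_x \pr_T(c,\eta+1)$, where $x := \pr_T(a,\eta) = \pr_T(c,\eta)$.

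Then I would treat $c <_S b$. Here $b <_T c$ is impossible by hypothesis (1), so either $c <_T b$ or $b$ and $c$ are incomparable. If $c <_T b$, then $\pr_T(b,\gamma) = c$, and since $\pr_T(a,\gamma) = \pr_T(b,\gamma)$ this gives $c \le_T a$, hence $c <_T a$, contradicting the previous paragraph. So $b$ and $c$ are incomparable; put $\eta' := \Delta(b,c) - 1$, which again satisfies $\eta' < \gamma$ by the same reasoning, and the definition of $<_S$ gives $\pr_T(c,\eta'+1) <_{x'} \pr_T(b,\eta'+1)$ with $x' := \pr_T(b,\eta') = \pr_T(c,\eta')$. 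Now I use that $a$ and $b$ agree below height $\gamma$: since $\eta+1 \le \gamma$ and $\eta'+1 \le \gamma$, the equalities $\pr_T(a,\beta) = \pr_T(b,\beta)$ for $\beta \le \gamma$ force $\Delta(a,c) = \Delta(b,c)$, whence $\eta = \eta'$, $x = x'$, and $\pr_T(a,\eta+1) = \pr_T(b,\eta+1)$. Substituting into the two displayed inequalities yields $\pr_T(a,\eta+1) <_x \pr_T(c,\eta+1)$ and $\pr_T(c,\eta+1) <_x \pr_T(a,\eta+1)$, contradicting that $<_x$ is a strict linear order. This contradiction establishes $\h_T(d) < \h_T(c)$. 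Finally, for the ``in particular'' clause: if $a$ and $b$ were comparable, say $a \le_T b$ (the case $b \le_T a$ being symmetric), then $d := a$ would satisfy $d \le_T a, b$ yet $\h_T(d) = \h_T(a) > \h_T(c)$ by (1), contradicting what was just proved.

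The main obstacle is the bookkeeping around the two split heights $\eta = \Delta(a,c)-1$ and $\eta' = \Delta(b,c)-1$: verifying that they both lie strictly below $\gamma$ (this is where normality and the hypothesis $\h_T(d) \ge \gamma$ are both used) and that they actually coincide, so that the two inequalities genuinely refer to the same node $x$ and the same pair of successors. Once that is nailed down, the final contradiction is immediate.
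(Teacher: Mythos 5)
Your proof is correct and is essentially the paper's argument: both use the hypothesis on $d$ to show that $b$ splits off from $c$ at the same node and in the same direction as $a$ does, contradicting $c <_S b$. The paper phrases this by taking the witness $y \le_T a$, $z \le_T c$, $y <_x z$ for $a <_S c$ and pushing $y$ up through $d$ to get $y \le_T b$, rather than by explicitly computing $\Delta(a,c) = \Delta(b,c)$ as you do, but the content is the same.
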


\begin{proof}
	We have that $a <_S c$ but it is not the case that $a <_T c$. 
	So there exists $x <_T a, c$, and immediate successors $y$ and $z$ of $x$ 
	such that $y \le_T a$, $z \le_T c$, and $y <_x z$. 
	Suppose for a contradiction that $d \le_T a,b$ but $\h_T(c) \le \h_T(d)$. 
	Since $y$ and $d$ are both less than or equal to $a$ but $y$ has height less than 
	or equal to the height of $d$, it follows that $y \le_T d$. 
	Hence $y \le_T b$. 
	It follows that $y$ witnesses that $b <_S c$, contradicting that $c <_S b$.
	\end{proof}

We now turn to developing a forcing poset for adding a Suslin tree with rational successors 
using finite conditions. 
This forcing, which is a minor variation of the forcing of 
S.\ Tennenbaum \cite{tennenbaum}, will be used in some form in all of the consistency results 
of this article. 
In addition to defining and proving the main facts about this forcing, we will also 
prove several technical lemmas which will be needed in later sections.

We fix some notation. 
Let $\q$ denote the rationals. 
For each $\alpha < \omega_1$, let 
$\q_\alpha := \{ (\alpha,q) : q \in \q \}$. 
Define $(\alpha,q) <_\alpha (\alpha,r)$ if $q < r$ in $\q$. 
With this ordering, $\q_\alpha$ is isomorphic to the rationals. 
Let $\q^* := \bigcup \{ \q_\alpha : \alpha < \omega_1 \}$. 
Define a function $h$ on $\q^*$ by $h(\alpha,q) := \alpha$ for all $(\alpha,q) \in \q^*$. 
The forcing poset $\p$ we will define introduces a Suslin tree $T$ such that 
for all $\alpha < \omega_1$, $\q_\alpha$ is equal to level $\alpha$ of $T$. 
In particular, the function $h$ will coincide with the height function $\h_T$ in the generic extension.

\begin{definition}
A condition in $\p$ is any finite tree $p$ whose nodes belong to $\q^*$ 
and satisfies that $x <_p y$ implies 
$h(x) < h(y)$.\footnote{The relation on a tree $p \in \p$ will be denoted by $<_p$. 
	Note that $h(x)$ is not the height of the node $x$ in $p$, 
	but rather its first coordinate as an ordered pair. 
	For a finite ordered set, being a tree is equivalent to its relation being 
	irreflexive, transitive, and \emph{downwards linear}, meaning that any two nodes below a 
	given node are comparable in the relation.}
Let $q \le p$ if $q$ end-extends $p$, that is, the underlying set of $p$ is a subset of the 
underlying set of $q$ and for all $x$ and $y$ in $p$, $x <_p y$ iff $x <_q y$.
\end{definition}

For a condition $p \in \p$ and an ordinal $\alpha < \omega_1$, let $p \restrict \alpha$ denote the subtree 
of $p$ consisting of nodes $x$ in $p$ such that $h(x) < \alpha$, and $p \setminus \alpha$ the subtree 
consisting of nodes $x$ 
such that $h(x) \ge \alpha$.

\begin{lemma}
	Let $p \in \p$ and $a$ and $b$ nodes of $p$ such that 
	$a$ is the immediate predecessor of $b$. 
	Let $x \in \q^*$ such that 
	$x$ is not in $p$ and satisfies that $h(a) < h(x) < h(b)$. 
	Then there exists $q \le p$ with underlying set $p \cup \{ x \}$ 
	such that $a <_q x <_q b$, and for any pair of nodes $(c,d)$ from $p$ other than 
	the pair $(a,b)$, 
	$c$ is the immediate predecessor of $d$ in $q$ iff 
	$c$ is the immediate predecessor of $d$ in $p$.
	\end{lemma}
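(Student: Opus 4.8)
The plan is to write down an explicit tree order $<_q$ on the set $p \cup \{x\}$ and then check the requirements directly. Since $q$ must end-extend $p$, the relation $<_q$ is forced to agree with $<_p$ on pairs of nodes of $p$, so the only decision is which nodes of $p$ lie below $x$ in $q$ and which lie above. Guided by the demand that $a <_q x <_q b$, I would put $c <_q x$ exactly when $c \le_p a$, put $x <_q d$ exactly when $b \le_p d$, and make $x$ incomparable to all remaining nodes of $p$. The key structural fact I will exploit throughout is that $a$ is the immediate predecessor of $b$ in $p$, which gives $\{c \in p : c <_p b\} = \{c \in p : c \le_p a\}$; thus the set of $<_q$-predecessors of $x$ is exactly the $<_p$-chain of nodes $\le_p a$.

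The first task is to verify that $q$ is a tree, i.e.\ that $<_q$ is irreflexive, transitive, and downwards linear. Irreflexivity is immediate from the definition. For transitivity I would split on whether the middle node of a would-be transitivity triple is $x$ or lies in $p$; every case collapses to transitivity of $<_p$ together with $a <_p b$. For downwards linearity, the set of $<_q$-predecessors of a node $w \in p$ is $\{c \in p : c <_p w\}$ when $b \not\le_p w$, and that set together with $x$ when $b \le_p w$; in the latter case I would use downwards linearity of $p$ at $w$ to see that each $c <_p w$ is $<_p$-comparable with $b$, and then the immediate-predecessor hypothesis to conclude that $c$ is $<_q$-comparable with $x$. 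The $<_q$-predecessors of $x$ itself form the $<_p$-chain of nodes $\le_p a$, which is linear. Next, $q$ is a condition of $\p$ because $u <_q v$ implies $h(u) < h(v)$: for pairs of nodes of $p$ this is inherited from $p$, and for pairs involving $x$ it follows from $h(a) < h(x) < h(b)$ together with $h(c) \le h(a)$ whenever $c \le_p a$ and $h(b) \le h(d)$ whenever $b \le_p d$.

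Finally, $q \le p$ and the underlying set of $q$ equals $p \cup \{x\}$ by construction, and $a <_q x <_q b$ holds by definition. For the immediate-predecessor clause, take $c, d \in p$ with $(c,d) \ne (a,b)$; since $<_p$ and $<_q$ agree on $p$, the claim can only fail if inserting $x$ created a node strictly between $c$ and $d$, i.e.\ $c <_q x <_q d$, which means $c \le_p a$ and $b \le_p d$. But then $(c,d) \ne (a,b)$ forces $c <_p a$ or $b <_p d$, and in either case $a$ or $b$ is a node of $p$ lying strictly $<_p$-between $c$ and $d$, contradicting that $c$ is the immediate predecessor of $d$ in $p$ (and conversely, nothing $<_q$-strictly between $c$ and $d$ can be new). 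I do not anticipate a genuine obstacle; the only place that needs care is the downwards-linearity check, where one must invoke both downwards linearity of $p$ and the hypothesis that $a$ is the immediate predecessor of $b$ in exactly the right way.
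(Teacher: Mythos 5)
Your proposal is correct and follows essentially the same route as the paper: the same explicit definition of $<_q$ (nodes $\le_p a$ below $x$, nodes $\ge_p b$ above $x$, everything else unchanged), a case-check that $q\in\p$, and the observation that the immediate-predecessor clause can only fail if $c <_q x <_q d$, which forces $a$ or $b$ strictly $<_p$-between $c$ and $d$. You simply spell out the tree-axiom verification that the paper calls routine.
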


\begin{proof}
	Let $q$ have underlying set $p \cup \{ x \}$, and 
	define $c <_q d$ if either:
	\begin{enumerate}
		\item $c <_p d$, 
		\item $c \le_p a$ and $d = x$, or
		\item $c = x$ and $b \le_p d$.
		\end{enumerate}
	A routine argument by checking cases shows that $q$ is in $\p$. 
	Clearly $q \le p$ and $a <_q x <_q b$.
	
	Consider a pair of nodes $(c,d)$ from $p$ which is different than the pair $(a,b)$. 
	We claim that $c$ is the immediate predecessor of $d$ in $p$ iff 
	$c$ is the immediate predecessor of $d$ in $q$. 
	The reverse implication is immediate from the fact that $q$ end-extends $p$. 
	Conversely, suppose that $c$ is the immediate predecessor of $d$ in $p$. 
	If $d$ is equal to $b$, then $c = a$ is the unique predecessor of $d$ in $p$, which 
	contradicts that the pair $(c,d)$ is different from the pair $(a,b)$. 
	Thus, $d \ne b$. 
	Suppose for a contradiction that $c$ is not the immediate predecessor of $d$ in $q$. 
	Since $q$ end-extends $p$ and $x$ is the only node of $q$ which is not in $p$, 
	this means that $c <_q x <_q d$. 
	By the definition of $<_q$ and since $b \ne d$, we have that $c \le_p a$ and $b <_p d$. 
	Thus, $c <_p b <_p d$, which contradicts that $c$ is the immediate predecessor of $d$ in $p$.
	\end{proof}

\begin{lemma}
	Let $p \in \p$. 
	Suppose that $\{ (a_i,b_i) : i < n \}$ is a family of distinct pairs, where $0 < n < \omega$, such that 
	for each $i < n$, $a_i$ is the immediate predecessor of $b_i$ in $p$.  
	Let $x_0, \ldots, x_{n-1}$ be distinct members of 
	$\q^*$ satisfying that for all $i < n$, 
	$h(a_i) < h(x_i) < h(b_i)$. 
	Then there exists $q \le p$ with underlying set equal to the underlying set of $p$ together with 
	$x_0,\ldots,x_{n-1}$ such that for all $i < n$, $a_i <_q x_i <_q b_i$.
	\end{lemma}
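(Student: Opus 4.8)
The plan is to prove the lemma by induction on $n$, adding the new nodes $x_0, \ldots, x_{n-1}$ one at a time and invoking the preceding single-node insertion lemma at each step. Concretely, I would build a descending chain of conditions $p = q_0 \ge q_1 \ge \cdots \ge q_n$, where for each $j < n$ the condition $q_{j+1}$ is obtained from $q_j$ by inserting the node $x_j$ strictly between $a_j$ and $b_j$; the final condition $q := q_n$ then has underlying set $p \cup \{x_0, \ldots, x_{n-1}\}$, as required. (As in the single-node lemma, I read the hypothesis as including that none of the $x_i$ lies in $p$, which is in any case forced by the conclusion about the underlying set of $q$.)

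To run this construction, at stage $j$ I must know two things before applying the previous lemma to the condition $q_j$, the pair $(a_j, b_j)$, and the node $x_j$: first, that $x_j$ is not a node of $q_j$; and second, that $a_j$ is the immediate predecessor of $b_j$ in $q_j$. The first is immediate, since $q_j$ has underlying set $p \cup \{x_0, \ldots, x_{j-1}\}$, the node $x_j$ does not belong to $p$, and the $x_i$ are pairwise distinct. The second — the persistence of the immediate-predecessor pairs through the successive insertions — is the real content, and it is exactly what the final clause of the single-node lemma is designed to supply. I would establish, by a secondary induction on $j$, the claim that for every $i$ with $j \le i < n$ the pair $(a_i, b_i)$ is an immediate-predecessor pair in $q_j$: this holds for $q_0 = p$ by hypothesis, and if it holds for $q_j$ then, since the pairs $\{(a_i,b_i) : i < n\}$ are distinct, each $(a_i,b_i)$ with $j < i < n$ is a pair of nodes of $q_j$ different from $(a_j,b_j)$, so by the conclusion of the single-node lemma it remains an immediate-predecessor pair in $q_{j+1}$. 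Taking $i = j$ in this claim gives precisely what is needed at stage $j$.

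Granting the claim, the construction goes through without obstruction. It then remains only to check that $q := q_n$ works: it lies below $p$ because end-extension is transitive; its underlying set is $p$ together with the $x_i$ by construction; and for each $i < n$ we have $a_i <_{q_{i+1}} x_i <_{q_{i+1}} b_i$ directly from the single-node lemma at stage $i$, whence, since $q_n$ end-extends $q_{i+1}$ and the nodes $a_i, x_i, b_i$ already belong to $q_{i+1}$, these relations are inherited by $q$. The only genuinely delicate point is the bookkeeping in the secondary induction — tracking which immediate-predecessor pairs survive each insertion — and since the preceding lemma was formulated with exactly this application in mind, no new combinatorial difficulty should arise.
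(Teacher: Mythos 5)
Your proof is correct and is essentially the paper's argument: the paper's entire proof reads ``Apply the preceding lemma inductively $n$ many times,'' and your write-up supplies exactly the bookkeeping that one-line proof leaves implicit, correctly identifying that the final clause of the single-node lemma is what guarantees each pair $(a_i,b_i)$ remains an immediate-predecessor pair after the earlier insertions.
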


\begin{proof}
	Apply the preceding lemma inductively $n$ many times.
	\end{proof}

The next lemma describes the basic method which we will use for proving the compatibility of conditions in $\p$ 
and related forcings.

\begin{lemma}
	Let $p$ and $q$ be in $\p$, $\alpha < \beta < \omega_1$, and $n < \omega$. 
	Assume:
	\begin{enumerate}
		\item $p \restrict \alpha = q \restrict \beta$;
		\item for all $x \in p \setminus \alpha$, $h(x) < \beta$;
		\item $\{ b_i : i < n \}$ is a set of pairwise incomparable nodes in $p \setminus \alpha$ and 
		$\{ c_i : i < n \}$ is a set of distinct minimal nodes of $q \setminus \beta$;
		\item for each $i < n$, the set $\{ x \in p \restrict \alpha : x <_p b_i \}$ is equal to 
		the set $\{ y \in q \restrict \beta : y <_q c_i \}$.
		\end{enumerate}
	Then there exists $r \in \p$ which end-extends $p$ and $q$, has underlying set $p \cup q$, 
	and satisfies that for all $b \in p \setminus \alpha$ and $c \in q \setminus \beta$, 
	$b <_r c$ iff for some $i < n$, $b \le_p b_i$ and $c_i \le_q c$.\footnote{If $n = 0$, then the sets 
	described in (3) are empty, so $q$ has no relations between members of $p \setminus \alpha$ 
	and $q \setminus \beta$.}
	\end{lemma}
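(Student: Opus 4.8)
The plan is to write $r$ down explicitly as the ``transitive closure'' of $<_p \cup <_q$ together with the $n$ bridges $b_i <_r c_i$, and then verify everything by a geography-driven case analysis. The first thing to record is that geography. By hypotheses (1) and (2), the nodes of $p \cup q$ fall into three pairwise disjoint blocks according to their $h$-value: $A := p \restrict \alpha = q \restrict \beta$ (the nodes with $h$-value $< \alpha$), $B := p \setminus \alpha$ (the nodes with $h$-value in $[\alpha,\beta)$), and $C := q \setminus \beta$ (the nodes with $h$-value $\ge \beta$). Then $p = A \cup B$, $q = A \cup C$, the orders $<_p$ and $<_q$ agree on $A$, $B$ is disjoint from $q$, and $C$ is disjoint from $p$. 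Now define, for $x, y \in p \cup q$, that $x <_r y$ iff either (a) $x <_p y$, or (b) $x <_q y$, or (c) there is $i < n$ with $x \le_p b_i$ and $c_i \le_q y$.

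A short check against the geography shows two things at once. First, $<_r$ restricted to $p$ is $<_p$ and restricted to $q$ is $<_q$: clause (b) relates two nodes of $p$ only when both lie in $A$, where it coincides with $<_p$; clause (c) can never relate two nodes of $p$, since its right-hand clause forces $h(y) \ge h(c_i) \ge \beta$, contradicting (2); and symmetrically for $q$, this time invoking (4) to see that a node of $A$ lying $\le_p b_i$ also lies $<_q c_i$. Second, for $b \in B = p \setminus \alpha$ and $c \in C = q \setminus \beta$ one has $b <_r c$ iff clause (c) holds for the pair, since clauses (a), (b) cannot relate a node of $B$ to a node of $C$. Thus, once we know $r \in \p$, it end-extends both $p$ and $q$, has underlying set $p \cup q$, and has exactly the new relations claimed.

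It remains to see $r \in \p$, i.e.\ (by the footnote to the definition of $\p$) that $<_r$ is irreflexive, transitive, downwards linear, and $h$-increasing. Irreflexivity and the $h$-increasing property are immediate, the latter because in clause (c) we get $h(x) \le h(b_i) < \beta \le h(c_i) \le h(y)$. For transitivity, given $u <_r v <_r w$, I would split on which block contains the middle node $v$; every case reduces to transitivity of $<_p$ or of $<_q$ except when one of the two links is a genuine $B$-to-$C$ link through some bridge $b_i \mapsto c_i$, and then one prepends $u \le_p b_i$ (when $v \in B$) or appends $c_i \le_q w$ (when $v \in C$) to produce the bridge witnessing $u <_r w$. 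The step I expect to be the only real obstacle is downwards linearity: given $u <_r w$ and $v <_r w$, show $u$ and $v$ are $<_r$-comparable. If $w \in A$ this is tree-ness of $A$, and if $w \in B$ then $u, v$ must both lie in $p$ strictly below $w$, so it is tree-ness of $p$. The delicate case is $w \in C$, where each of the pairs $u, w$ and $v, w$ may be linked directly inside $q$ or through a bridge. If both are bridges $b_i \mapsto c_i$ and $b_j \mapsto c_j$, then $c_i, c_j$ are both $\le_q w$, hence $<_q$-comparable, hence — being distinct minimal nodes of $q \setminus \beta$ — equal, so $i = j$ and we reduce to tree-ness of $p$ through $b_i$. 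If one is a bridge and the other lies inside $q$, I would compare $c_i$ with the relevant node of $q$ by downwards linearity of $q$, and use (4) together with the minimality of $c_i$ in $q \setminus \beta$ to push the comparison down into $A$ or onto $b_i$, so that tree-ness of $p$ or of $q$ finishes it. With downwards linearity in hand, $r \in \p$, and the remaining assertions of the statement were already verified in the previous paragraph.
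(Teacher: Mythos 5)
Your definition of $r$ is exactly the paper's: $b <_r c$ iff $b <_p c$, or $b <_q c$, or $b \le_p b_i$ and $c_i \le_q c$ for some $i < n$; the paper simply declares the verification that this is irreflexive, transitive, and downwards linear to be routine, while you carry it out (correctly, including the key use of hypothesis (4) and the minimality of the $c_i$ in the downwards-linearity case at a node of $q \setminus \beta$). The proposal is correct and takes essentially the same approach as the paper.
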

	
\begin{proof}
	Define $r$ with underlying set $p \cup q$ so that for all distinct $b$ and $c$ in $r$, $b <_r c$ iff 
	at least one of the following is true:
	\begin{enumerate}
		\item $b <_p c$,
		\item $b <_q c$, or
		\item for some $i < n$, $b \le_p b_i$ and $c_i \le_q c$.
		\end{enumerate}
	The relation on $r$ is irreflexive, and the proof that it is transitive and downwards 
	linear is a routine verification. 
	Clearly $r$ end-extends $p$ and $q$, and by definition has the required property described in the lemma.
\end{proof}

\begin{corollary}
	The forcing poset $\p$ is $\omega_1$-Knaster.
\end{corollary}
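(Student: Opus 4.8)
The plan is to show that any uncountable subset of $\p$ has an uncountable subset that is pairwise compatible, using the standard $\Delta$-system argument combined with the compatibility criterion provided by the preceding lemma. So let $\{ p_\eta : \eta < \omega_1 \}$ be an uncountable family of conditions in $\p$.

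First I would normalize the conditions. Each $p_\eta$ is a finite tree whose nodes lie in $\q^*$, and the first coordinates of these nodes form a finite set of countable ordinals; let $a_\eta$ denote the finite set $h[p_\eta] \subseteq \omega_1$. Apply the $\Delta$-system lemma to $\{ a_\eta : \eta < \omega_1 \}$ to obtain an uncountable $I \subseteq \omega_1$ and a finite root $R$ such that for distinct $\eta, \eta' \in I$, $a_\eta \cap a_{\eta'} = R$. By further thinning $I$ (using that there are only countably many possibilities), I may assume that for all $\eta \in I$ the restriction $p_\eta \restrict (\sup R + 1)$ is one fixed condition $p^*$ — more precisely, that the part of $p_\eta$ supported on nodes with $h$-value in $R$ is the same across all $\eta \in I$, as is the tree structure it carries. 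I would also arrange, by another thinning, that there is a fixed ordinal $\alpha$ with $R \subseteq \alpha$ and $h(x) < \alpha$ for every $x$ in this common part, and that for distinct $\eta < \eta'$ in $I$ the ``new'' part of $p_\eta$ (nodes with $h$-value outside $R$) lies entirely below the new part of $p_{\eta'}$ in the sense of $h$-values; that is, the tails $a_\eta \setminus R$ occupy disjoint intervals of ordinals arranged in increasing order along $I$. Finally, by thinning once more I would fix the ``type'' of how each new part attaches to the common part: for each minimal node $c$ of the tail of $p_\eta$, the set $\{ x \in p^* : x <_{p_\eta} c \}$ depends only on which ``slot'' $c$ occupies, not on $\eta$.

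Next, given two conditions $p_\eta$ and $p_{\eta'}$ from the thinned family with $\eta < \eta'$, I would verify that they satisfy the hypotheses of the compatibility lemma with the roles $p := p_{\eta'}$, $q := p_\eta$, the ordinal $\alpha$ as above, $\beta$ chosen to be an ordinal with $R \subseteq \beta$ that is above all $h$-values appearing in the tail of $p_{\eta'}$ (this is where the increasing-intervals arrangement is used: the tail of $p_\eta$ lies above $\beta$), and $n := 0$. Hypothesis (1), $p \restrict \alpha = q \restrict \beta$, holds because both equal the common part $p^*$; hypothesis (2), that every node of $p \setminus \alpha$ has $h$-value below $\beta$, holds by the choice of $\beta$; and hypotheses (3) and (4) are vacuous when $n = 0$. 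The lemma then produces $r \in \p$ end-extending both $p_\eta$ and $p_{\eta'}$, so they are compatible. Since this holds for every pair from the uncountable thinned family, that family is pairwise compatible, which is exactly $\omega_1$-Knaster.

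The main obstacle is the bookkeeping in the normalization step: one must be careful that ``being a finite tree of a given isomorphism type, with nodes in $\q^*$ having prescribed $h$-values in $R$ and a prescribed attachment pattern for the tail'' really does partition the conditions into only countably many classes, so that an uncountable homogeneous subfamily survives the thinning. This is routine — the $h$-values in $R$ are fixed, $\q$ is countable, and there are only finitely many tree structures on a finite set — but it is the part that requires attention. Once the conditions are lined up so that one sits entirely ``below'' the other off the common root, invoking the compatibility lemma with $n = 0$ is immediate, since in that degenerate case the lemma simply glues the two trees along their shared lower part with no new order relations between the disjoint upper parts.
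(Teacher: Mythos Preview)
Your approach is essentially the paper's: thin the family so that any two conditions share a common lower part and have upper parts separated by an ordinal, then invoke Lemma~2.8 with $n = 0$. The paper streamlines the thinning by indexing conditions along a stationary set and using the index itself as the cutoff (so that $p_\alpha \restrict \alpha = p_\beta \restrict \beta$ for $\alpha < \beta$ in the stationary set), rather than going through an explicit $\Delta$-system, but the content is the same.

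One bookkeeping slip: in your application of Lemma~2.8 the roles of $p$ and $q$ are reversed. Since you arranged the tail of $p_\eta$ to lie below the tail of $p_{\eta'}$, you must take $p := p_\eta$ and $q := p_{\eta'}$, and choose $\beta$ above the tail of $p_\eta$ but \emph{below} the tail of $p_{\eta'}$; with your current choices hypothesis~(1) fails because $p_\eta \restrict \beta$ contains the entire tail of $p_\eta$, not just the common part. With the roles swapped the argument goes through. (The extra thinning to fix the attachment type of each tail to the root is unnecessary when $n = 0$, though harmless.)
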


\begin{proof}
	Let $\langle p_\alpha : \alpha < \omega_1 \rangle$ be a sequence of distinct conditions of $\p$. 
	Since each $p_\alpha$ is finite, a straightforward thinning out 
	argument yields a stationary set $S \subseteq \omega_1$ so that for all 
	$\alpha < \beta$ in $S$, $p_\alpha \restrict \alpha = p_\beta \restrict \beta$ and 
	for all $x \in p_\alpha \setminus \alpha$, $h(x) < \beta$. 
	Now for any $\alpha < \beta$ in $S$, apply Lemma 2.8 
	(where $n = 0$ for assumption (3)) 
	to see that $p_\alpha$ and $p_\beta$ are compatible.
	\end{proof}

Consider a generic filter $G$ on $\p$. 
Define $T$ in $V[G]$ to be the set of nodes which belong to some tree in $G$, and let $b <_T a$ 
if for some $p \in G$, $b <_p a$. 
The following is a list of basic facts about $T$ which follow easily from 
the definition of $\p$ and Lemma 2.6:
\begin{enumerate}
	\item $T$ is a tree;
	\item the underlying set of $T$ is equal to $\q^*$, 
	and for all $\alpha < \omega_1$, 
	level $\alpha$ of $T$ is exactly $\q_\alpha$; 
	in particular, $T$ is an $\omega_1$-tree;
	\item $T$ is normal;
	\item $T$ has rational successors.
	\end{enumerate}
Let $\dot T$ be a $\p$-name which is forced to be the tree just described. 
Observe that by (2), $\p$ forces that the functions $h$ and $\h_{\dot T}$ coincide.

\begin{proposition}
	The forcing poset $\p$ forces that $\dot T$ is a Suslin tree.
	\end{proposition}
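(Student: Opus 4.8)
The plan is to carry out the classical Tennenbaum-style argument showing that a finite-condition generic tree is Suslin, using Lemma 2.8 as the tool for producing common extensions. By Corollary 2.9, $\p$ is $\omega_1$-Knaster, hence c.c.c., and in particular preserves $\omega_1$; and by the basic facts (1)--(4) recorded above, $\p$ forces that $\dot T$ is a normal $\omega_1$-tree all of whose levels are countable. Since a normal $\omega_1$-tree is Suslin exactly when it has no uncountable antichain, it is enough to show that $\p$ forces that $\dot T$ has no uncountable antichain.

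Suppose not, and fix $p \in \p$ forcing that $\dot A$ is an uncountable antichain of $\dot T$. Since each level of $\dot T$ is forced countable, for every $\alpha < \omega_1$ the set of $q \le p$ for which there is a node $x \in q$ with $h(x) \ge \alpha$ and $q \Vdash x \in \dot A$ is dense below $p$; here $h$ is the first-coordinate function on $\q^*$, which $\p$ forces to agree with $\h_{\dot T}$. For each $\alpha < \omega_1$ fix such a condition $p_\alpha \le p$ together with a node $x_\alpha \in p_\alpha$ satisfying $h(x_\alpha) \ge \alpha$ and $p_\alpha \Vdash x_\alpha \in \dot A$.

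The next step is to thin out the sequence $\langle (p_\alpha, x_\alpha) : \alpha < \omega_1 \rangle$ as in the proof of Corollary 2.9, with a little extra care. Applying Fodor's lemma to the regressive function $\alpha \mapsto \sup\{ h(z) + 1 : z \in p_\alpha,\ h(z) < \alpha \}$, and thinning further using that the relevant finite objects all lie in the countable set $\q^* \restrict \xi$, one obtains a stationary set $S \subseteq \omega_1$, an ordinal $\xi < \min S$, a fixed finite tree $e$, and a fixed set $L \subseteq e$ such that for all $\alpha \in S$: $p_\alpha$ has no node of height in $[\xi, \alpha)$; $p_\alpha \restrict \xi = e$; $\{ z \in e : z <_{p_\alpha} x_\alpha \} = L$; and such that for all $\alpha < \beta$ in $S$, every node of $p_\alpha$ has height $< \beta$ (only two elements of $S$ will be used). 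Now fix $\alpha < \beta$ in $S$. The nodes of $p_\beta$ that are $\le_{p_\beta} x_\beta$ and have height $\ge \beta$ form a nonempty $<_{p_\beta}$-chain (it contains $x_\beta$), and I let $c_0$ be its $<_{p_\beta}$-least element; then $c_0$ is a minimal node of $p_\beta \setminus \beta$ with $c_0 \le_{p_\beta} x_\beta$, and every $p_\beta$-predecessor of $c_0$ has height $< \xi$, so the set of $p_\beta$-predecessors of $c_0$ equals $\{ z \in e : z <_{p_\beta} x_\beta \} = L$. I then apply Lemma 2.8 with the substitutions $p := p_\alpha$, $q := p_\beta$, split ordinals $\xi < \beta$, $n := 1$, $b_0 := x_\alpha$, and $c_0$ as above. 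Hypothesis (1) holds since $p_\alpha \restrict \xi = e = p_\beta \restrict \beta$ (the latter because $p_\beta$ has no node of height in $[\xi,\beta)$), hypothesis (2) holds since every node of $p_\alpha$ of height $\ge \xi$ has height in $[\alpha,\beta)$, and hypotheses (3) and (4) hold by the choice of $c_0$ together with $h(x_\alpha) \ge \alpha > \xi$. Lemma 2.8 then produces a condition $r \in \p$ with $r \le p_\alpha$, $r \le p_\beta$, and $x_\alpha <_r x_\beta$.

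To finish, note that $x_\alpha \ne x_\beta$ because $h(x_\alpha) < \beta \le h(x_\beta)$. Since $r \le p_\alpha$ forces $x_\alpha \in \dot A$, $r \le p_\beta$ forces $x_\beta \in \dot A$, and $x_\alpha <_r x_\beta$ forces $x_\alpha <_{\dot T} x_\beta$, the condition $r$ forces that $\dot A$ contains two distinct comparable nodes, contradicting that $\dot A$ is an antichain. I expect the main obstacle to be the bookkeeping of the thinning step, and in particular the verification of hypothesis (4) of Lemma 2.8 in the case where $x_\beta$ has $p_\beta$-predecessors of height $\ge \beta$: one cannot glue directly at $x_\beta$, but gluing at the least such predecessor $c_0$ works, the key point being that the height-$<\xi$ portion of the branch of $p_\beta$ below $x_\beta$ is automatically below $c_0$ and, after the thinning, coincides with the corresponding portion of the branch of $p_\alpha$ below $x_\alpha$.
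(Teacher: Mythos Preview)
Your proposal is correct and follows essentially the same approach as the paper's proof: reduce to ruling out an uncountable antichain, thin the sequence of decided witnesses so that the low parts of the conditions agree and the sets of low predecessors of the distinguished nodes coincide, then apply Lemma~2.8 with $n=1$ to the node $x_\alpha$ and the minimal high predecessor of $x_\beta$ (the paper's $a_\beta^*$, your $c_0$) to force comparability. The only cosmetic differences are that the paper enumerates the antichain as $\langle \dot a_\alpha : \alpha < \omega_1\rangle$ and thins in two passes (first to get $p_\alpha \restrict \alpha = p_\beta \restrict \beta$, then to push $h(a_\alpha) \ge \alpha$ and match the predecessor sets), whereas you pick $x_\alpha$ of height $\ge \alpha$ at the outset and do the Fodor-style thinning in one step with an explicit root $e$ and predecessor set $L$.
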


\begin{proof}
	Since $\p$ forces that $\dot T$ is a normal $\omega_1$-tree, it suffices to show that 
	$\p$ forces that $\dot T$ does not have an uncountable antichain. 
	Suppose for a contradiction that some condition $p$ forces that 
	$\langle \dot a_\alpha : \alpha < \omega_1 \rangle$ is an uncountable antichain of $\dot T$. 
	For each $\alpha < \omega_1$, choose $p_\alpha \le p$ and $a_\alpha$ such that 
	$a_\alpha \in p_\alpha$ and $p_\alpha$ forces that $a_\alpha = \dot a_\alpha$.

	Using a standard thinning out argument, we can find a stationary set 
	$S \subseteq \omega_1$ satisfying that for all $\alpha < \beta$ in $S$, 
	$p_\alpha \restrict \alpha = p_\beta \restrict \beta$, and for all $x \in p_\alpha \setminus \alpha$, 
	$h(x) < \beta$. 
	By Lemma 2.8, for all $\alpha < \beta$ in $S$, 
	$p_\alpha$ and $p_\beta$ are compatible in $\p$. 
	By thinning out further if necessary, 
	we may assume that for all $\alpha \in S$, 
	$h(a_\alpha) \ge \alpha$ 
	(otherwise we can find $\alpha < \beta$ for which $a_\alpha = a_\beta$, 
	which contradicts that $p_\alpha$ and $p_\beta$ are compatible). 
	Now fix a stationary set $U \subseteq S$ such that for all $\alpha < \beta$ in $U$, 
	$$
	\{ y \in p_\alpha \restrict \alpha : y <_{p_\alpha} a_\alpha \} = 
	\{ y \in p_\beta \restrict \beta : y <_{p_\beta} a_\beta \}.
	$$
	
	Consider $\alpha < \beta$ in $U$. 
	Let $a_\beta^*$ be the minimum member of $p_\beta \setminus \beta$ below $a_\beta$. 
	Applying Lemma 2.8 to $p_\alpha$ and $p_\beta$ and the sets 
	$\{ a_\alpha \}$ and $\{ a_\beta^* \}$, we can find $r \le p_\alpha, p_\beta$ 
	such that $a_{\alpha} <_r a_\beta^*$. 
	Then $a_\alpha <_r a_\beta$, which contradicts that $r$ forces that $\dot a_\alpha$ 
	and $\dot a_\beta$ are incomparable in $\dot T$.
	\end{proof}

Since $\dot T$ has rational successors, we can define the lexicographical ordering on $\dot T$ 
which is a Suslin line in $V^\p$. 
We let $\dot S$ be a $\p$-name for this Suslin line.

\begin{lemma}
	Assume that $p \in \p$, $x$, $y$, $z$, $a$, and $b$ are distinct members of $p$, and 
	\begin{enumerate}
		\item $x <_p y <_p a$;
		\item $x <_p z <_p b$;
		\item $h(y) = h(z) = h(x) + 1$;
		\item $y <_{h(x)+1} z$.
		\end{enumerate}
	Then $p$ forces that $a <_{\dot S} b$.
	\end{lemma}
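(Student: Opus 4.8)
The plan is to argue in an arbitrary generic extension $V[G]$ with $p \in G$, writing $T := \dot T[G]$ and $S := \dot S[G]$, and to verify $a <_S b$ directly from the definition of the lexicographical order. The first point is that, because every $q \le p$ end-extends $p$, the restriction of $<_T$ to the nodes $x,y,z,a,b$ is already decided by $p$: in $V[G]$ we have $x <_T y <_T a$, $x <_T z <_T b$, and, since $h$ and $\h_T$ coincide, $\h_T(y) = \h_T(z) = \h_T(x)+1$, while $y \ne z$. As $y$ and $z$ are distinct nodes of the same height, they are incomparable in $T$; also $x <_T a$ and $x <_T b$.

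The crux is the claim that $\Delta(a,b) = \h_T(x)+1$, with $x$ the $<_T$-largest node below both $a$ and $b$; in particular $a$ and $b$ are incomparable in $T$. To prove it, take any $d$ with $d \le_T a,b$. Then $d$ is comparable with $y$ (both lie below $a$) and with $z$ (both lie below $b$). If $\h_T(d) \ge \h_T(y)$, then $y \le_T d \le_T b$, so $y$ and $z$ are two distinct nodes of the same height lying below $b$, which is impossible. Hence $\h_T(d) \le \h_T(x)$, and since $d$ and $x$ both lie below $a$ this gives $d \le_T x$. Combined with $x <_T a,b$, this shows $\{ u : u \le_T a, b \} = \{ u : u \le_T x \}$, so $\Delta(a,b) = \h_T(x)+1$ and $\pr_T(a,\h_T(x)) = \pr_T(b,\h_T(x)) = x$.

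Finally I would read off the conclusion from the definition of $<_S$ for the incomparable pair $a,b$. Setting $\gamma := \Delta(a,b) - 1 = \h_T(x)$, we have $\pr_T(a,\gamma) = \pr_T(b,\gamma) = x$, and since $y \le_T a$ and $z \le_T b$ have height $\gamma+1$, also $\pr_T(a,\gamma+1) = y$ and $\pr_T(b,\gamma+1) = z$. By definition $a <_S b$ iff $y <_x z$, where $<_x$ is the ordering of the immediate successors of $x$; in the generic tree $T$ this ordering is the restriction of $<_{h(x)+1}$ (the rational ordering of level $h(x)+1$, that is, of $\q_{h(x)+1}$) to the successors of $x$, so hypothesis (4) yields exactly $y <_x z$. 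Hence $a <_S b$, and as $G$ was arbitrary, $p$ forces $a <_{\dot S} b$.

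The only step carrying real content is the identification $\Delta(a,b) = \h_T(x)+1$; the rest is unwinding definitions. It is worth noting the contrast with Lemma 2.4: there one is given the relation $a <_S c <_S b$ and deduces tree-structural information, whereas here the incomparability of $a$ and $b$ and the exact location of their splitting point have to be extracted from the tree recorded in $p$ before the definition of $<_S$ can be applied. A minor point to keep straight is the identification of $<_x$ with the restriction of $<_{h(x)+1}$, which is part of how $T$ is obtained from $\p$.
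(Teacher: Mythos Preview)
Your argument is correct and follows the same approach as the paper: show that $y$ and $z$ are immediate successors of $x$ in $\dot T$, that $a$ and $b$ split precisely at $x$, and then read off $a <_{\dot S} b$ from the definition of the lexicographical order. The paper's proof is a two-line sketch that simply asserts $a$ and $b$ are incomparable and that $y,z$ witness the order; you have supplied the verification it omits, namely that $\Delta(a,b)=\h_T(x)+1$ (equivalently, that no node at height $\ge \h_T(x)+1$ lies below both $a$ and $b$), which is exactly what is needed for $y$ and $z$ to be the relevant witnesses.
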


\begin{proof}
	Note that $p$ forces that $y$ and $z$ are both immediate successors of $x$ in $\dot T$, and $a$ 
	and $b$ are incomparable in $\dot T$. 
	By the definition of the lexicographical ordering of $\dot T$, $p$ forces that 
	$a <_{\dot S} b$ as witnessed by $y$ and $z$.
	\end{proof}

\section{Entangledness in Suslin lines}

The notion of an entangled set of reals was introduced by Abraham and Shelah \cite{AS}. 
We can define the idea more generally for linear orders, as noted by Todorcevic 
\cite{stevoentangled}.

\begin{definition}
	Let $L$ be an uncountable linear order. For each $1 \le n < \omega$, $L$ is \emph{$n$-entangled} 
	if for any sequence 
	$$
	\langle (a_{\xi,0},\ldots,a_{\xi,n-1}) : \xi < \omega_1 \rangle
	$$
	of pairwise disjoint increasing $n$-tuples from $L$, 
	for any function $g : n \to 2$, 
	there exist $\xi, \delta < \omega_1$ 
	such that for all $i < n$, 
	$a_{\xi,i} <_L a_{\delta,i}$ iff $g(i) = 1$.
	We say that $L$ is \emph{entangled} if it is $n$-entangled for all $1 \le n < \omega$. 	
\end{definition}

We make several easy observations. 
Any uncountable 
linear order is $1$-entangled, so the notion only becomes substantial 
when $n \ge 2$. 
We can replace ``increasing $n$-tuples'' with ``injective $n$-tuples'' and get an equivalent definition. 
If $L$ is $n$-entangled, then any uncountable suborder of it is also $n$-entangled. 
If there exists a countable set $Y \subseteq L$ such that $L \setminus Y$ is $n$-entangled, then 
$L$ is $n$-entangled. 
If $1 \le m < n < \omega$, then $n$-entangled implies $m$-entangled.

We introduce some useful terminology for discussing entangledness. 
A function $g : n \to 2$ will be referred to as a \emph{type}. 
If the property described in the definition holds for 
$\xi$, $\delta$ and $g$, we say that 
the sequence of $n$-tuples \emph{realizes} the type $g$, and that the pair of $n$-tuples 
$(a_{\xi,0},\ldots,a_{\xi,n-1})$ and $(a_{\delta,0},\ldots,a_{\delta,n-1})$ \emph{satisfies} the type $g$. 
Satisfying a type is not order dependent, so the pair of $n$-tuples also satisfies the type $1 - g$.  
Hence, a sequence of $n$-tuples realizes a type $g$ iff it realizes the type $1 - g$. 
We sometimes identify a type with a sequence of $0$'s and $1$'s in the obvious way. 

Todorcevic \cite{stevoentangled} pointed out without proof that any $2$-entangled linear order is 
c.c.c., and any $3$-entangled linear order is separable. 
We prove these facts for the benefit of the reader.

\begin{proposition}
	Let $L$ be an uncountable linear order. 
	If $L$ is $2$-entangled, then $L$ is c.c.c.
	\end{proposition}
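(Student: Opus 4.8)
The plan is to prove the contrapositive: I will show that if $L$ is not c.c.c., then $L$ is not $2$-entangled. So suppose $L$ is not c.c.c.\ and fix an uncountable family $\{(a_\xi,b_\xi) : \xi < \omega_1\}$ of pairwise disjoint nonempty open intervals of $L$, where $a_\xi <_L b_\xi$ for each $\xi$. The natural candidate for a counterexample to $2$-entangledness is the sequence of $2$-tuples $\langle (a_\xi,b_\xi) : \xi < \omega_1\rangle$ together with the type $g : 2 \to 2$ defined by $g(0) = 1$ and $g(1) = 0$; the content of the argument is (i) arranging that these $2$-tuples are genuinely pairwise disjoint, and (ii) checking that the type $g$ cannot be realized.

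For (i), the obstacle is that disjoint open intervals may share endpoints (for instance $(0,1)$ and $(1,2)$), so I first thin out the family. The key point is that a given point $p \in L$ is the left endpoint of at most one member of the family and the right endpoint of at most one member: if $x <_L y$ then $(p,x) \subseteq (p,y)$ and $(y,p) \subseteq (x,p)$, so two distinct nonempty intervals can neither share a left endpoint nor share a right endpoint. Consequently each index $\xi$ shares an endpoint with at most two other indices ($b_\eta = a_\xi$ for at most one $\eta$, and $a_\eta = b_\xi$ for at most one $\eta$, while $a_\eta=a_\xi$ or $b_\eta=b_\xi$ forces $\eta=\xi$). A routine recursion of length $\omega_1$ — at stage $\alpha$ only countably many indices are forbidden — then produces an uncountable set $W \subseteq \omega_1$ such that $\{a_\xi,b_\xi\} \cap \{a_\eta,b_\eta\} = \emptyset$ for all distinct $\xi,\eta \in W$. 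Re-enumerating $W$ in order type $\omega_1$, we obtain a sequence $\langle (a_\xi,b_\xi) : \xi < \omega_1\rangle$ of pairwise disjoint increasing $2$-tuples of $L$.

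For (ii), apply $2$-entangledness to this sequence with the type $g$ above. This yields $\xi,\delta < \omega_1$ with $a_\xi <_L a_\delta$ and with $b_\xi <_L b_\delta$ failing. In particular $\xi \ne \delta$, so disjointness gives $b_\xi \ne b_\delta$, whence $b_\delta <_L b_\xi$. Combining with $a_\delta <_L b_\delta$ (the $\delta$-tuple is increasing) we get $a_\xi <_L a_\delta <_L b_\delta <_L b_\xi$. Choosing any $c$ in the nonempty interval $(a_\delta,b_\delta)$, we have $a_\xi <_L c <_L b_\xi$, so $c$ lies in both $(a_\xi,b_\xi)$ and $(a_\delta,b_\delta)$, contradicting the pairwise disjointness of the original family. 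Hence $L$ is not $2$-entangled, which is what we wanted.

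The only genuinely non-formal step is the thinning in (i); once the $2$-tuples are pairwise disjoint, the remainder is a single application of $2$-entangledness to one carefully chosen type.
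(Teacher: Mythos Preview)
Your proof is correct and follows essentially the same approach as the paper: both argue by contrapositive, thin the family of disjoint intervals so that the endpoint pairs $(a_\xi,b_\xi)$ become pairwise disjoint $2$-tuples, and then show that the type $(1,0)$ cannot be realized because it would force one interval to contain another. The only stylistic difference is that you phrase step (ii) as ``apply $2$-entangledness and derive a contradiction,'' whereas the paper directly argues that no pair satisfies the type $(1,0)$; these are logically equivalent.
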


\begin{proof}
	Suppose that $L$ is not c.c.c. 
	Then there exist pairwise disjoint nonempty open intervals $I_\alpha = (a_\alpha,b_\alpha)$ 
	for $\alpha < \omega_1$. 
	If $\alpha \ne \beta$, then $a_\alpha \ne a_\beta$. 
	For otherwise, assuming that $b_\alpha < b_\beta$ for concreteness, 
	any element between $a_\alpha$ and $b_\alpha$ is in $I_\alpha \cap I_\beta$. 
	Now $b_\alpha = a_\beta$ is possible, but since it can only happen for at most one $\beta$ by the previous 
	observation, we can inductively select uncountably 
	many such intervals where this does not happen. 
	This way we may assume without loss of generality that the sequence of 
	pairs $\langle (a_\alpha,b_\alpha) : \alpha < \omega_1 \rangle$ is pairwise disjoint.
	
	Suppose for a contradiction that for some $\alpha \ne \beta$ in $\omega_1$, the pair $(a_\alpha,b_\alpha)$ and 
	$(a_\beta,b_\beta)$ satisfies the type $(1,0)$. 
	Without loss of generality, $a_\alpha < a_\beta$ and $b_\beta < b_\alpha$. 
	Then $I_\beta \subseteq I_\alpha$, 
	and since $I_\beta \ne \emptyset$, 
	$I_\alpha$ and $I_\beta$ are not disjoint, which is a contradiction. 
	Hence, $L$ is not $2$-entangled.
	\end{proof}

\begin{proposition}
	Let $L$ be an uncountable linear order. 
	If $L$ is $3$-entangled, then $L$ is separable.
	\end{proposition}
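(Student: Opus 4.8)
The plan is to argue the contrapositive along the lines of Proposition~3.4: assuming $L$ is not separable, I will construct a pairwise disjoint sequence of increasing triples from $L$ which fails to realize a particular type $g : 3 \to 2$. First, since $3$-entangled implies $2$-entangled, Proposition~3.4 gives that $L$ is c.c.c.; suppose toward a contradiction that $L$ is not separable. A key preliminary is the claim that the set $Z$ of points of $L$ having an immediate successor or an immediate predecessor is countable. If not, then (passing to one of the two cases) uncountably many $x$ have an immediate successor $x^+$, and since each pair $(x,x^+)$ shares a coordinate with at most two other such pairs, a routine thinning yields an uncountable pairwise disjoint sequence $\langle (x_\xi,x_\xi^+) : \xi < \omega_1 \rangle$ of increasing $2$-tuples. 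Applying $2$-entangledness to the type $(1,0)$ produces $\xi,\delta$ with $x_\xi <_L x_\delta$ and $x_\delta^+ <_L x_\xi^+$, whence $x_\xi <_L x_\delta <_L x_\delta^+ <_L x_\xi^+$ and $x_\delta$ lies strictly between $x_\xi$ and its immediate successor --- impossible. So $Z$ is countable; consequently, for any countable $D \subseteq L$ the set $D \cup Z$ is countable, so by non-separability it is not dense, i.e.\ some nonempty open interval of $L$ is disjoint from $D \cup Z$.

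Now build $\langle (a_\alpha,c_\alpha,b_\alpha) : \alpha < \omega_1 \rangle$ by recursion on $\alpha$. Given the triples for $\beta < \alpha$, let $D_\alpha := \{ a_\beta,c_\beta,b_\beta : \beta < \alpha \}$ and fix, using the previous paragraph, a nonempty open interval $(u,v)$ with $(u,v) \cap (D_\alpha \cup Z) = \emptyset$. Pick $c_\alpha \in (u,v)$. Since $c_\alpha \notin Z$ it has neither an immediate predecessor nor an immediate successor, so the open intervals $(u,c_\alpha)$ and $(c_\alpha,v)$ are nonempty; choose $a_\alpha \in (u,c_\alpha)$ and $b_\alpha \in (c_\alpha,v)$. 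Then $a_\alpha <_L c_\alpha <_L b_\alpha$, and all three lie in $(u,v)$ and hence outside $D_\alpha$; thus all the chosen points are distinct, and for each $\beta < \alpha$ none of $a_\beta,c_\beta,b_\beta$ lies in $(a_\alpha,b_\alpha) \subseteq (u,v)$. In particular $\langle (a_\alpha,c_\alpha,b_\alpha) : \alpha < \omega_1 \rangle$ is a pairwise disjoint sequence of increasing triples.

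It remains to identify an unrealized type. Fix $\beta < \alpha$. Since $a_\beta,c_\beta,b_\beta$ are distinct from $a_\alpha,b_\alpha$ and avoid $(a_\alpha,b_\alpha)$, exactly one of the following holds: (I) $b_\beta <_L a_\alpha$; (II) $a_\beta >_L b_\alpha$; (III) $a_\beta <_L a_\alpha$ and $b_\beta >_L b_\alpha$, in which case moreover either $c_\beta <_L a_\alpha$ or $c_\beta >_L b_\alpha$. A direct reading off of coordinates shows that the pair of triples indexed by $\beta$ and $\alpha$, in that order, satisfies the type $(1,1,1)$ in case (I), $(0,0,0)$ in case (II), $(1,1,0)$ in the first subcase of (III), and $(1,0,0)$ in the second subcase of (III). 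Since a pair of triples satisfies a type if and only if it satisfies the complementary type, every pair from our sequence satisfies one of the complementary pairs $\{(1,1,1),(0,0,0)\}$, $\{(1,1,0),(0,0,1)\}$, $\{(1,0,0),(0,1,1)\}$ --- and never $(0,1,0)$ or $(1,0,1)$. Hence the sequence does not realize the type $g = (0,1,0)$, so $L$ is not $3$-entangled, a contradiction. Therefore $L$ is separable.

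The step I expect to be the crux is the countability of $Z$: it is exactly what allows each stage of the recursion to find a nonempty open interval avoiding both the finite-so-far set $D_\alpha$ and all ``jump'' points, inside which a genuine triple with strictly interior middle coordinate exists. A merely c.c.c.\ linear order (for instance $\R \times \{0,1\}$ ordered lexicographically) can fail this property, so $2$-entangledness, and not just the c.c.c., is genuinely needed here; the rest is a direct if case-heavy verification.
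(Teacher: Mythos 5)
Your argument is correct, but it takes a genuinely different route from the paper's. Both proofs argue the contrapositive and both begin by citing Proposition 3.2 to get the c.c.c., but the paper then recursively chooses increasing pairs $(a_\xi,b_\xi)$ spanning nonempty intervals that miss all earlier points, proves two auxiliary claims (each element of $L$ occurs in only countably many of the pairs; nested configurations $a_\xi <_L a_\delta <_L b_\delta <_L b_\xi$ occur cofinally), and finally forms the triples $(a_{\xi_i},a_{\delta_i},b_{\xi_i})$, which never satisfy the type $(1,0,1)$. You instead front-load the work into the countability of the set $Z$ of jump points, which you derive from $2$-entangledness applied to the pairs $(x,x^+)$ with the type $(1,0)$ --- essentially the same device the paper uses in Proposition 3.2, here exploiting emptiness of the intervals $(x,x^+)$ rather than disjointness of a family of intervals. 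With $Z$ countable, non-separability supplies at each stage an interval avoiding $Z$ and all earlier points, inside which a triple with a strictly interior middle coordinate exists; your four-case analysis correctly shows that the resulting pairwise disjoint sequence of increasing triples never satisfies $(0,1,0)$ or its complement $(1,0,1)$, so that type is not realized. What your route buys is a cleaner endgame: because the middle coordinate is interior by construction, the case analysis is immediate and exhaustive, and the paper's two intermediate claims are replaced by the single up-front lemma on $Z$. (Minor aside: $\R\times\{0,1\}$ with the lexicographic order does witness that the c.c.c.\ alone cannot make $Z$ countable, though that particular order is in fact separable in the paper's sense, so it illustrates only the need for $2$-entangledness in your lemma, not a failure of the proposition.)
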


\begin{proof}
	Assume that $L$ is not separable, and we will show that $L$ is not $3$-entangled. 
	Since $3$-entangled implies $2$-entangled, if $L$ is not $2$-entangled then we are done. 
	So assume that $L$ is $2$-entangled. 
	Then $L$ is c.c.c.\ by Proposition 3.2.

	Define an injective sequence of increasing pairs 
	$\langle (a_\xi,b_\xi) : \xi < \omega_1 \rangle$ from $L$ recursively as follows. 
	Let $\delta < \omega_1$, and assume that $a_\beta$ and $b_\beta$ are defined for all $\beta < \delta$. 
	Since $L$ is not separable, the set $E_\delta := \{ a_\beta, b_\beta : \beta < \delta \}$ is not dense. 
	If there are uncountably many pairs $a <_L b$ which form a nonempty open interval which does not meet 
	$E_\delta$, then we may choose one such pair $(a_\delta, b_\delta)$ which is not equal to the pair 
	$(a_\beta,b_\beta)$ for any $\beta < \delta$. 
	If there are only countably many such pairs, then note that for any such pair $a <_L b$, the open interval with 
	endpoints $a$ and $b$ must be countable. 
	But then we could add all such pairs and anything between them to $E_\delta$ to produce a countable 
	dense set, which is a contradiction.

	We claim that each element of $L$ occurs in a pair of the form $(a_\beta,b_\beta)$ at most 
	countably often. 
	Suppose for a contradiction that $a \in L$ occurs uncountably often. 
	Then it occurs as the first or second member of such a pair uncountably often. 
	Without loss of generality assume the former, since the other case is similar.  
	So there exists an uncountable set $X \subseteq \omega_1$ such that for all $\beta \in X$, $a = a_\beta$. 
	Then for all $\beta < \xi$ in $X$, since $b_\beta$ is not in between $a_\xi$ and $b_\xi$ and 
	$b_\beta \ne b_\xi$, we 
	must have that $a_\beta = a = a_\xi <_L b_\xi <_L b_\beta$. 
	So $\langle b_\beta : \beta \in X \rangle$ is an uncountable strictly decreasing sequence of elements of $L$, 
	which contradicts that $L$ is c.c.c.

	Using the claim, we can thin out our sequence of pairs $(a_\beta,b_\beta)$ for $\beta < \omega_1$  
	to be pairwise disjoint on an uncountable set. 
	Thus, without loss of generality we may assume that 
	the sequence $\langle (a_\beta,b_\beta) : \beta < \omega_1 \rangle$ 
	is a sequence of pairwise disjoint increasing pairs from $L$. 
	So for all $\beta < \xi < \omega_1$, neither $a_\beta$ nor $b_\beta$ are in the closed 
	interval $[a_\xi,b_\xi]$.
	
	We claim that for all $\gamma < \omega_1$, there are $\gamma < \xi < \delta < \omega_1$ such that 
	$$
	a_\xi <_L a_\delta <_L b_\delta <_L b_\xi
	$$
	First, since $L$ is c.c.c., 
	there exist $\gamma < \xi < \delta < \omega_1$ such that 
	the open intervals $(a_\xi,b_\xi)$ and $(a_\delta,b_\delta)$ are not disjoint. 
	By the choice of our pairs, neither $a_\xi$ nor $b_\xi$ are in the closed interval $[a_\delta,b_\delta]$. 
	Since the intervals $(a_\xi,b_\xi)$ and $(a_\delta,b_\beta)$ 
	meet each other, obviously the only way that this can happen is that 
	$a_\xi <_L a_\delta <_L b_\delta <_L b_\xi$, as required.
	
	Using this claim, we can define by recursion 
	a sequence $\langle (\xi_i,\delta_i) : i < \omega_1 \rangle$ 
	of increasing pairs of ordinals in $\omega_1$ such that:
	\begin{enumerate}
		\item for all $i < \omega_1$, 
		$a_{\xi_i} <_L a_{\delta_i} <_L b_{\delta_i} <_L b_{\xi_i}$;
		\item for all $i < j < \omega_1$, $\delta_i < \xi_j$.
		\end{enumerate}
	For each $i < \omega_1$, let $\vec x_i := (a_{\xi_i},a_{\delta_i},b_{\xi_i})$. 
	Then $\langle \vec x_i : i < \omega_1 \rangle$ is a pairwise disjoint sequence of 
	increasing $3$-tuples. 
	We claim that this sequence does not realize the type $g := (1,0,1)$, which implies 
	that $L$ is not $3$-entangled.
	
	Consider $i < j < \omega_1$, and we will show that $\vec x_i$ and $\vec x_j$ 
	do not satisfy $g$. 
	First, assume that $a_{\xi_j} <_L a_{\xi_i}$. 
	Since $a_{\xi_i}$ is not in the closed interval $[a_{\xi_j},b_{\xi_j}]$, 
	we must have that $b_{\xi_j} <_L a_{\xi_i}$. 
	It follows that $\vec x_i$ and $\vec x_j$ satisfy the type $(0,0,0)$, and hence do not satisfy 
	the type $g$.
	
	Secondly, assume that $a_{\xi_i} <_L a_{\xi_j}$. 
	If $\vec x_i$ and $\vec x_j$ satisfy $g = (1,0,1)$, then we must have that 
	$$
	a_{\xi_i} <_L a_{\xi_j}, \ a_{\delta_j} <_L a_{\delta_i}, \ b_{\xi_i} <_L b_{\xi_j}.
	$$
	Combining this with the other information we have, it follows that 
	$$
	a_{\xi_i} <_L a_{\xi_j} <_L a_{\delta_j} <_L a_{\delta_i} <_L b_{\xi_i} <_L b_{\xi_j}.
	$$
	In particular, $b_{\xi_i}$ is in the interval $[a_{\xi_j},b_{\xi_j}]$, which is a contradiction 
	since $\xi_i < \xi_j$.
	\end{proof}

\begin{corollary}
	Any dense $3$-entangled linear order is order isomorphic to a set of reals.
	\end{corollary}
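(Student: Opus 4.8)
The plan is to obtain the corollary immediately from Proposition 3.4 together with the classical fact, essentially due to Cantor, that every dense separable linear order order-embeds into $\R$. So let $L$ be a dense $3$-entangled linear order. Since $3$-entangledness implies $2$-entangledness, $L$ is uncountable, and Proposition 3.4 gives that $L$ is separable; fix a countable $D \subseteq L$ witnessing clause (3) of Definition 3.1. Since $L$ is dense, for any $a <_L b$ there is an element of $L$ strictly between $a$ and $b$, and hence, by the choice of $D$, an element of $D$ strictly between them.

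For the classical fact I would argue as follows. Since $D$ is countable there is an order embedding $\phi : D \to (0,1)$, because any countable linear order order-embeds into any countable dense linear order without endpoints. Extend $\phi$ to all of $L$ by setting
$$
\psi(x) := \sup\bigl( \{ 0 \} \cup \{ \phi(d) : d \in D, \ d <_L x \} \bigr),
$$
which is a well-defined real number in $[0,1]$, as the set on the right is nonempty and bounded above by $1$. It then remains to check that $\psi$ is strictly increasing. Given $x <_L y$, density of $L$ provides some $c$ with $x <_L c <_L y$, and two applications of the order-density of $D$ — first inside $(x,c)$, then inside a subinterval — produce $d, d' \in D$ with $x <_L d <_L d' <_L y$; since every $e \in D$ with $e <_L x$ has $\phi(e) < \phi(d)$ we get $\psi(x) \le \phi(d)$, while $d' <_L y$ gives $\phi(d') \le \psi(y)$, so $\psi(x) \le \phi(d) < \phi(d') \le \psi(y)$. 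Hence $\psi$ is strictly increasing, in particular injective, and is therefore an order isomorphism of $L$ onto the set of reals $\psi[L] \subseteq \R$.

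This argument presents no real obstacle; the only step requiring any care is the strict monotonicity of $\psi$, which is why one extracts two points of $D$ strictly between $x$ and $y$ rather than one, using both the density of $L$ and the order-density of $D$ supplied by separability. Possible endpoints of $L$ (at most one minimum and one maximum) need no special treatment with the displayed formula: the constant $0$ absorbs the minimum, and for the maximum $M$ and any $y <_L M$ the same inequality yields $\psi(y) < \psi(M)$.
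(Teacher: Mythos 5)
Your proof is correct and takes essentially the same route as the paper: you reduce to separability via the preceding proposition (note this is Proposition 3.3, not 3.4, in the paper's numbering) and then invoke the classical fact that a dense separable linear order embeds into $\mathbb{R}$, which the paper simply cites as well-known and you prove in full via the $\sup$-extension of an embedding of the countable order-dense subset. The only delicate point is the strict monotonicity of $\psi$, and your device of extracting two points $d <_L d'$ of $D$ between $x$ and $y$ handles it correctly, so there is no gap.
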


\begin{proof}
	As is well-known, any dense separable linear order is isomorphic to a set of reals.
	\end{proof}
 
\begin{corollary}
	Assume that $L$ is a Suslin line. 
	Then $L$ is not $3$-entangled.
	\end{corollary}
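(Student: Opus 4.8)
The plan is to read this off directly from Proposition 3.3. Recall that by definition a Suslin line is c.c.c.\ but not separable. So the argument is a one-line contradiction: if $L$ were $3$-entangled, then Proposition 3.3 would give that $L$ is separable, contradicting the assumption that $L$ is a Suslin line. Hence $L$ is not $3$-entangled.

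Since all the real work has already been carried out in Propositions 3.2 and 3.3, there is no genuine obstacle here. The only thing worth remarking is \emph{why} we bother stating this separately: it motivates the search, carried out in the rest of the paper, for a meaningful weakening of entangledness (weak $n$-entangledness) that a Suslin line \emph{can} consistently satisfy, in contrast to full $3$-entangledness, which is outright impossible for a Suslin line. One could, if desired, instead give a self-contained proof by repeating the construction inside the proof of Proposition 3.3 — build a pairwise disjoint sequence of increasing pairs $\langle (a_\beta,b_\beta) : \beta < \omega_1\rangle$ witnessing non-separability, pass to a nested configuration $a_{\xi_i} <_L a_{\delta_i} <_L b_{\delta_i} <_L b_{\xi_i}$ using c.c.c., and form the $3$-tuples $\vec x_i = (a_{\xi_i}, a_{\delta_i}, b_{\xi_i})$, which fail to realize the type $(1,0,1)$ — but this would merely duplicate Proposition 3.3, so the appeal to that proposition is the natural route.

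Concretely, I would write: \emph{By definition, $L$ is not separable. If $L$ were $3$-entangled, then $L$ would be separable by Proposition 3.3, a contradiction. Therefore $L$ is not $3$-entangled.}
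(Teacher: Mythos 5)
Your proof is correct and is exactly the argument the paper intends: the corollary is stated without proof because it follows immediately from Proposition 3.3 together with the definition of a Suslin line as a c.c.c.\ non-separable linear order. Nothing to add.
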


We now introduce a natural weakening of the notion of entangledness in linear orders 
which can be satisfied by a Suslin line. 
Recall that any linear order is $1$-entangled, so we restrict our attention to $n \ge 2$.

\begin{definition}
	Let $L$ be a linear order and $2 \le n < \omega$. 
	A sequence of pairwise disjoint increasing $n$-tuples 
	$\langle (a_{\xi,0},\ldots,a_{\xi,n-1}) : \xi \in X \rangle$ from $L$, 
	where $X \subseteq \omega_1$, 
	is \emph{separated} if there exist 
	$c_0, c_1, \cdots, c_{n-2}$ in $L$ such that:
	$$
	\forall \xi \in X \ \forall i < n-1 \ \ a_{\xi,i} <_L c_i <_L a_{\xi,i+1}.
	$$
\end{definition}

\begin{definition}
	Let $L$ be an uncountable linear order. 
	For each $2 \le n < \omega$, 
	$L$ is \emph{weakly $n$-entangled} 
	if for any separated sequence 
	$$
	\langle (a_{\xi,0},\ldots,a_{\xi,n-1}) : \xi < \omega_1 \rangle
	$$
	of pairwise disjoint increasing $n$-tuples from $L$, 
	for any function $g : n \to 2$, there exist 
	$\xi, \delta < \omega_1$ such that for all $i < n$, 
	$a_{\xi,i} <_L a_{\delta,i}$ iff $g(i) = 1$. 
	We say that $L$ is \emph{weakly entangled} if it is weakly $n$-entangled for all $2 \le n < \omega$. 	
\end{definition}

We carry over all of the terminology about entangledness which was introduced after Definition 3.1
to weak entangledness in the obvious way. 
Note that an uncountable 
linear order $L$ is weakly 
$n$-entangled iff any separated sequence of pairwise disjoint increasing 
$n$-tuples from $L$ which is indexed by some uncountable subset of $\omega_1$ realizes any type.

It turns out that weakly $2$-entangled and $2$-entangled coincide for dense c.c.c.\ linear orders. 
This equivalence will not hold, however, when $n \ge 3$. 
Namely, by Corollary 3.5 a Suslin line cannot 
be $3$-entangled, but as we will see below, 
a Suslin line can be weakly entangled.

\begin{proposition}
	Let $L$ be a dense linear order which is c.c.c. 
	Then $L$ is $2$-entangled iff $L$ is weakly $2$-entangled.
\end{proposition}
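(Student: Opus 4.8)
The plan is to prove the nontrivial direction: if $L$ is a dense c.c.c.\ linear order which is weakly $2$-entangled, then $L$ is $2$-entangled. (The converse is immediate, since $2$-entangledness applies to all pairwise disjoint increasing pairs, in particular the separated ones.) So let $\langle (a_\xi, b_\xi) : \xi < \omega_1 \rangle$ be an arbitrary pairwise disjoint sequence of increasing pairs from $L$, and let $g : 2 \to 2$ be a type; I want to find $\xi, \delta$ such that the pair realizes $g$. By the remarks after Definition 3.1, realizing a type is equivalent to realizing its complement, so it suffices to handle the types $(1,1)$ and $(1,0)$ (the remaining types $(0,0)$ and $(0,1)$ are their complements). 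Moreover $(1,0)$ is handled by essentially the argument of Proposition 3.2: since $L$ is c.c.c., it was shown there that no sequence of pairwise disjoint open intervals is uncountable, but in fact the relevant content is that two members of a pairwise disjoint increasing sequence of \emph{pairs} always realize $(1,0)$ somewhere once $L$ is c.c.c.\ — more carefully, I should argue that if the sequence never realizes $(1,0)$ then the intervals $(a_\xi,b_\xi)$ are nested or pairwise disjoint in a way that contradicts c.c.c.\ (using density so that each interval is nonempty). The crux is therefore the type $(1,1)$, i.e.\ showing there are $\xi,\delta$ with $a_\xi <_L a_\delta$ and $b_\xi <_L b_\delta$.

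For the type $(1,1)$, I would try to pass from the given (possibly unseparated) sequence of pairs to a separated subsequence indexed by an uncountable set, and then invoke weak $2$-entangledness. The key device is density: for each pair $(a_\xi, b_\xi)$, since $a_\xi <_L b_\xi$ and $L$ is dense, choose $c_\xi$ with $a_\xi <_L c_\xi <_L b_\xi$. Now I want an uncountable $X \subseteq \omega_1$ and a single element $c$ such that $a_\xi <_L c <_L b_\xi$ for all $\xi \in X$; such an $X$ gives a separated sequence (with the constant "wall" $c_0 := c$), to which weak $2$-entangledness applies and yields the type $(1,1)$ on an uncountable subset, finishing the proof. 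The existence of such a $c$ and $X$ is where c.c.c.\ does the real work: if no single point lies in uncountably many of the intervals $(a_\xi,b_\xi)$, I should derive a contradiction with c.c.c., e.g.\ by recursively carving out an uncountable pairwise disjoint family of nonempty open subintervals — at stage $\alpha$, the previously chosen countably many intervals cannot cover all the $c_\xi$'s (each covered only countably often), so pick a $c_\xi$ avoiding them and shrink $(a_\xi,b_\xi)$ around it away from the finitely many chosen intervals' endpoints that are relevant. Density is used again to ensure the shrunken intervals are nonempty.

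The main obstacle, and the step I expect to require the most care, is precisely this extraction of an uncountable "centered" subfamily — making the recursive disjointification rigorous, handling the bookkeeping of which previously selected intervals could interfere, and confirming that the resulting uncountable pairwise disjoint family of nonempty open intervals genuinely contradicts c.c.c. One subtlety is that shrinking an interval $(a_\xi, b_\xi)$ to isolate $c_\xi$ must be done with new endpoints obtained from density, and one must ensure these new intervals are still nonempty and still pairwise disjoint across the recursion; a clean way is to observe that it suffices to separate the chosen point $c_\xi$ from the countably many already-chosen disjoint intervals by choosing the new interval inside the relevant "gap" containing $c_\xi$. Once the separated uncountable subsequence is in hand, the appeal to weak $2$-entangledness is immediate, and combining the $(1,0)$ case with the $(1,1)$ case (and their complements) covers all four types, completing the argument that $L$ is $2$-entangled.
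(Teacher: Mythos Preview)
Your plan has a genuine gap, and it lies exactly where you suspected the difficulty would be. The key step in your $(1,1)$ argument is the claim that in a dense c.c.c.\ linear order, given $\omega_1$ nonempty open intervals $(a_\xi,b_\xi)$, some point $c$ must lie in uncountably many of them (so that an uncountable separated subsequence exists). This is false. In a Suslin line one can build a family of intervals $\{I_x : x \in T\}$ indexed by a Suslin tree $T$, with $I_y \subsetneq I_x$ when $x <_T y$ and $I_x \cap I_y = \emptyset$ when $x,y$ are incomparable; the endpoint-pairs are pairwise disjoint, yet for any point $c$ the set $\{x : c \in I_x\}$ is a chain of $T$ and hence countable. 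Your recursive disjointification cannot succeed here (the line is c.c.c.), and the reason your bookkeeping breaks is that ``each $c_\xi$ is in only countably many intervals'' does \emph{not} imply ``each chosen $J_\beta$ contains only countably many $c_\xi$'s''. So Case~2 of the paper's proof --- no separated subsequence exists --- genuinely occurs, and you need a separate argument there.

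Your treatment of $(1,0)$ is also off. If $(1,0)$ is never realized, then $a_\xi < a_\delta$ implies $b_\xi < b_\delta$, i.e.\ no interval contains another; but partial overlap is still allowed, so you do \emph{not} get ``nested or pairwise disjoint''. Concretely, in $\mathbb{R}$ the sequence $\langle (r_\xi, r_\xi + 1) : \xi < \omega_1\rangle$ with distinct $r_\xi$ never realizes $(1,0)$, so c.c.c.\ and density alone cannot produce that type. The paper handles both problems together: it splits on whether a separated subsequence exists, and in the bad case recursively builds an uncountable $Y$ on which each earlier pair $(a_\xi,b_\xi)$ avoids the open interval $(a_\delta,b_\delta)$ for $\xi < \delta$. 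On $Y$ any two pairs fall into a clean trichotomy (disjoint to the left, nested, or disjoint to the right); c.c.c.\ forces the nested option to occur somewhere (giving $(1,0)$) and forces a disjoint option to occur somewhere (giving $(1,1)$), since otherwise one obtains either $\omega_1$ pairwise disjoint intervals or an $\omega_1$-increasing sequence.
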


\begin{proof}
	The forward implication is obvious.
	For the reverse implication, assume that $L$ is weakly $2$-entangled, and we will show that $L$ is $2$-entangled. 
	Fix a sequence $\langle (a_{\xi},b_{\xi}) : \xi < \omega_1 \rangle$ of pairwise disjoint 
	increasing pairs from $L$, and we will show that this sequence realizes any type.
	
	We split the proof into two cases. 
	First, assume that there exists an uncountable set 
	$X \subseteq \omega_1$ and some $c \in L$ such that 
	for all $\xi \in X$, $a_{\xi} <_L c <_L b_{\xi}$. 
	In this case, the sequence $\langle (a_\xi,b_\xi) : \xi \in X \rangle$ is separated. 
	Since $L$ is weakly $2$-entangled, this sequence realizes any type, 
	and therefore so does our original sequence.

	Secondly, assume that there does not exist such a set $X$. 
	Then for any $c \in L$, there exists some $\gamma < \omega_1$ such that 
	for all $\gamma < \delta < \omega_1$, 
	$c$ is not in the open interval with endpoints $a_\delta$ and $b_\delta$. 
	Using this fact, by a straightforward recursion we can find 
	an uncountable set $Y \subseteq \omega_1$ such that for all $\xi < \delta$ in $Y$, 
	neither $a_\xi$ nor $b_\xi$ are strictly in between $a_\delta$ and $b_\delta$.
	
	Observe that for all $\xi < \delta$ in $Y$, either:
	\begin{enumerate}
		\item $a_\xi < b_\xi < a_\delta$, 
		\item $a_\xi < a_\delta < b_\delta < b_\xi$, or 
		\item $b_\delta < a_\xi < b_\xi$,
		\end{enumerate}
	with the other possibilities being excluded by the choice of $Y$. 
	In the first and third case, the types $(1,1)$ and $(0,0)$ are both satisfied 
	by $(a_\xi,b_\xi)$ and $(a_\delta,b_\delta)$. 
	In the second case, the types $(1,0)$ and $(0,1)$ are both satisfied by 
	$(a_\xi,b_\xi)$ and $(a_\delta,b_\delta)$. 
	Hence, we will be done if we can show that case 2 occurs for some $\xi < \delta$ in $Y$, 
	and either case 1 or case 3 occurs for some $\xi < \delta$ in $Y$.

	Suppose for a contradiction that case 2 does not occur. 
	Then for all $\xi < \delta$, cases 1 and 3 imply that 
	the intervals $(a_\xi,b_\xi)$ and $(a_\delta,b_\delta)$ are pairwise disjoint and nonempty 
	since $L$ is dense. 
	But this contradicts that $L$ is c.c.c. 
	Thus, indeed case 2 occurs. 
	Now assume that neither case 1 nor 3 occurs. 
	Then for all $\xi < \delta$ in $Y$, $a_\xi < a_\delta$. 
	So $L$ contains a strictly increasing sequence of order type $\omega_1$, which implies 
	that $L$ is not c.c.c., which again is a contradiction.
	\end{proof}

Let us see that we cannot remove the assumption of being dense in the above proposition.
Namely, the linear order $M$ in the next result is not dense.

\begin{proposition}
	Suppose that $L$ is a dense weakly $2$-entangled c.c.c.\ linear order. 
	Then there exists a weakly $2$-entangled c.c.c.\ linear order $M$ which is not $2$-entangled. 
	Moreover, $M$ is separable iff $L$ is separable.
	\end{proposition}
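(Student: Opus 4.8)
The plan is to let $M$ be the lexicographic product $L \times \{0,1\}$: its underlying set is $\{(a,i) : a \in L, \ i \in \{0,1\}\}$, ordered by $(a,i) <_M (b,j)$ iff $a <_L b$, or $a = b$ and $i < j$. This is an uncountable linear order which is not dense, since nothing lies strictly between $(a,0)$ and $(a,1)$. First I would verify that $M$ is c.c.c.: given a nonempty open interval $((p,i),(q,j))$ of $M$, nonemptiness forces $p \ne q$ and hence $p <_L q$; as $L$ is dense, the $L$-interval $(p,q)$ is then nonempty, and if two such $L$-intervals share a point $c$ then $(c,0)$ lies in both corresponding $M$-intervals. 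So an uncountable pairwise disjoint family of nonempty open intervals of $M$ would yield one in $L$, contradicting that $L$ is c.c.c. Next, $M$ is not $2$-entangled: fixing an uncountable $A \subseteq L$, the sequence $\langle ((a,0),(a,1)) : a \in A \rangle$ is a pairwise disjoint sequence of increasing pairs which fails to realize the type $(1,0)$, because $(a,0) <_M (b,0)$ and $(a,1) <_M (b,1)$ are each equivalent to $a <_L b$, so cannot hold for exactly one of the two coordinates.

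The crux is to show that $M$ is weakly $2$-entangled. I would start with a separated sequence $\langle ((a_\xi,i_\xi),(b_\xi,j_\xi)) : \xi < \omega_1 \rangle$ of pairwise disjoint increasing pairs of $M$ with separator $(c,k)$, so that $(a_\xi,i_\xi) <_M (c,k) <_M (b_\xi,j_\xi)$, which forces $a_\xi \le_L c \le_L b_\xi$ for all $\xi$. By pairwise disjointness there are at most two indices with $a_\xi = c$ and at most two with $b_\xi = c$; discarding these I may assume $a_\xi <_L c <_L b_\xi$, and in particular $a_\xi <_L b_\xi$, for all remaining $\xi$. Again by pairwise disjointness, each element of $L$ appears among $\{a_\xi,b_\xi\}$ for at most two indices $\xi$, so the graph on the remaining index set that joins $\xi$ to $\xi'$ whenever $\{a_\xi,b_\xi\} \cap \{a_{\xi'},b_{\xi'}\} \ne \emptyset$ has finite degree; a straightforward recursion of length $\omega_1$ then produces an uncountable independent set $X$. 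Now $\langle (a_\xi,b_\xi) : \xi \in X \rangle$ is a separated sequence of pairwise disjoint increasing pairs of $L$, so by weak $2$-entangledness of $L$ it realizes every type. Finally, for distinct $\xi,\delta \in X$ the elements $a_\xi, a_\delta, b_\xi, b_\delta$ are all distinct, so $(a_\xi,i_\xi) <_M (a_\delta,i_\delta)$ iff $a_\xi <_L a_\delta$, and likewise for the second coordinates; hence a pair realizing a type $g$ in the $L$-sequence realizes $g$ in the $M$-sequence, and $M$ is weakly $2$-entangled.

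For the moreover clause, the easy direction is that if $D$ is countable and dense in $L$ then $D \times \{0,1\}$ is countable and dense in $M$: a nonempty open interval of $M$ has endpoints with distinct $L$-coordinates $p <_L q$, and picking $d \in D$ with $p <_L d <_L q$ puts $(d,0)$ inside it. Conversely, if $D$ is countable and dense in $M$, I would show the projection $\pi[D]$, where $\pi(a,i) = a$, is dense in $L$: for $p <_L q$ the $M$-interval $((p,1),(q,0))$ is nonempty (it contains $(s,0)$ for any $L$-point $s$ between $p$ and $q$, which exists since $L$ is dense), and any $(d',m) \in D$ strictly inside it has $p <_L d' <_L q$, so $d' \in \pi[D]$.

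The main obstacle is the weak $2$-entangledness of $M$, and within that the bookkeeping required to replace a separated sequence of pairs of $M$ by an honestly pairwise disjoint separated sequence of pairs of $L$ while keeping uncountably many indices; the finite-degree independent-set argument takes care of this, after which transferring a realized type back up to $M$ is immediate because all the relevant $L$-coordinates are then distinct.
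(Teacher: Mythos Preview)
Your proof is correct and uses the same construction as the paper, namely $M = L \times 2$ with the lexicographic order; the arguments for c.c.c., failure of $2$-entangledness, and separability are essentially identical (the paper in fact leaves the last two to the reader).

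The only substantive difference is in the bookkeeping for weak $2$-entangledness: rather than your finite-degree graph argument, the paper first applies pigeonhole to thin to an uncountable subsequence on which the second coordinates $i_\xi$ and $j_\xi$ are constant, say $m$ and $n$. After that, pairwise disjointness of the $L$-pairs $\langle (a_\xi,b_\xi)\rangle$ is immediate: $a_\xi = a_{\xi'}$ would force $(a_\xi,m) = (a_{\xi'},m)$, and $a_\xi = b_{\xi'}$ is ruled out by $a_\xi <_L c <_L b_{\xi'}$. This shortcut avoids the independent-set extraction entirely, though your route is perfectly valid and only marginally longer.
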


\begin{proof}
	Consider $M := L \times 2$ with the lexicographical ordering. 
	So $(a,m) <_{M} (b,n)$ iff either (1) $a <_L b$, or (2) $a = b$, $m = 0$, 
	and $n = 1$.\footnote{See \cite{linear} for more information about this linear order.} 
	Note that for any $a \in L$, there is nothing in $M$ in between $(a,0)$ and $(a,1)$. 
	Fixing any injective sequence $\langle a_i : i < \omega_1 \rangle$ of elements of $L$, 
	it is easy to verify that the sequence $\langle ( (a_i,0), (a_i,1) ) : i < \omega_1 \rangle$ fails 
	to realize the type $(1,0)$ and so witnesses that $M$ is not $2$-entangled.
	
	Let us show that $M$ is weakly $2$-entangled. 
	Consider a separated sequence of pairwise disjoint increasing pairs 
	$\langle (x_i,y_i) : i < \omega_1 \rangle$ from $M$. 
	Fix $z \in M$ such that for all $i < \omega_1$, 
	$x_i <_{M} z <_{M} y_i$. 
	For some $m, n < 2$, there are uncountably many $i < \omega_1$ such that $x_i \in L \times \{ m \}$ 
	and $y_i \in L \times \{ n \}$. 
	Without loss of generality, assume that this holds for all $i < \omega_1$. 
	For each $i < \omega_1$, 
	write $x_i = (a_i,m)$ and $y_i = (b_i,n)$. 
	Note that for all $i < \omega_1$, since $z$ is in between $x_i$ and $y_i$, we cannot have that $a_i = b_i$. 
	It easily follows that $a_i <_L b_i$ by the lexicographical ordering. 

	Write $z = (c,k)$ for some $k < 2$. 	
	Now it could be the case that for some $i < \omega_1$, 
	$c$ is equal to either $a_i$ or $b_i$. 
	However, this can happen at most twice. 
	So without loss of generality, assume that $c$ is not equal to $a_i$ or $b_i$ for all $i < \omega_1$.
	By the lexicographical ordering, we have that $a_i <_L c <_L b_i$ for all $i < \omega_1$.
	 
	Thus, $\langle (a_i,b_i) : i < \omega_1 \rangle$ is a separated sequence of pairwise disjoint 
	increasing pairs of $L$. 
	Since $L$ is weakly $2$-entangled, this sequence realizes any type. 
	By the lexicographical ordering, our original sequence 
	$\langle (x_i,y_i) : i < \omega_1 \rangle$ realizes any type as well, as can be easily verified. 
	The other claims that $M$ is c.c.c., and that $M$ is separable iff $L$ is separable, follow 
	by similar arguments which we will leave for the reader.
	\end{proof}

Our next goal is to prove that for dense separable linear orders, 
$n$-entangled and weakly $n$-entangled are equivalent.

\begin{lemma}
	Let $L$ be a dense separable linear order. 
	Then for any pairwise disjoint sequence of increasing 
	pairs $\langle (a_\xi,b_\xi) : \xi < \omega_1 \rangle$ from $L$, 
	there exists an uncountable set $X \subseteq \omega_1$ and some $c \in L$ 
	such that for all $\xi \in X$, $a_\xi <_L c <_L b_\xi$. 
	\end{lemma}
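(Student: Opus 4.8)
The plan is to exploit the separability of $L$ by working with a countable dense set $D$. Fix such a set $D \subseteq L$. For each $\xi < \omega_1$, since $a_\xi <_L b_\xi$, either there is some element of $L$ strictly between $a_\xi$ and $b_\xi$, in which case density of $D$ gives a point $d_\xi \in D$ with $a_\xi <_L d_\xi <_L b_\xi$, or else the open interval $(a_\xi, b_\xi)$ is empty. I would first argue that the second alternative can occur for only countably many $\xi$: if $(a_\xi, b_\xi) = \emptyset$, then $b_\xi$ is the immediate successor of $a_\xi$; but a dense linear order has no two elements forming an immediate-successor pair unless there is nothing between them, and in a separable dense order the set of such "jumps" (pairs with empty interior) is countable — indeed, each such gap is determined by a cut realized in $L$, and distinct gaps correspond to pairwise disjoint nonempty intervals once we note $D$ is dense, contradicting... actually more directly: since the pairs $(a_\xi,b_\xi)$ are pairwise disjoint as tuples, if uncountably many had empty interior we would still need a cleaner argument. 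The cleanest route: just discard the (at most countably many, by an argument below, or else by restricting) $\xi$ with $(a_\xi,b_\xi)=\emptyset$ and assume every $(a_\xi,b_\xi)$ is nonempty, hence pick $d_\xi \in D \cap (a_\xi, b_\xi)$.

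Now $\langle d_\xi : \xi < \omega_1 \rangle$ is a sequence of elements of the countable set $D$, so by the pigeonhole principle there is a single $c \in D$ and an uncountable set $X \subseteq \omega_1$ with $d_\xi = c$ for all $\xi \in X$. Then for all $\xi \in X$ we have $a_\xi <_L c <_L b_\xi$, which is exactly what is required. So modulo handling the degenerate pairs, the proof is a one-line pigeonhole argument.

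The one point that needs care — and what I expect to be the main (minor) obstacle — is justifying that only countably many $\xi$ can have $(a_\xi, b_\xi)$ empty. Here is the argument I would use: suppose $(a_\xi, b_\xi) = \emptyset$ for all $\xi$ in some uncountable $Z \subseteq \omega_1$. Since the pairs are pairwise disjoint, the endpoints $\{a_\xi : \xi \in Z\}$ and $\{b_\xi : \xi \in Z\}$ give, for each $\xi \in Z$, a "gap" of $L$. Because $L$ is separable, $L$ has at most countably many gaps: if $(a_\xi,b_\xi)$ and $(a_\eta,b_\eta)$ are two such empty intervals with, say, $b_\xi \le_L a_\eta$, then since $a_\xi <_L b_\xi$ with nothing between, $D$ cannot meet $(a_\xi, b_\xi)$, yet by density of $D$ in the full order there must be a point of $D$ in any nonempty interval of $L$ containing an interior point — so the empty intervals $(a_\xi,b_\xi)$ are themselves pairwise disjoint subsets of $L$ each containing no point of $D$, and distinct gaps are separated by elements of $D$; more precisely, the map sending a gap to the pair consisting of the $D$-points immediately flanking it (using density of $D$) is finite-to-one into $D \times D$, forcing the set of gaps to be countable. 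Hence $Z$ is countable, contradiction. I would write this as a short remark and then restrict the original sequence to the co-countable set where the intervals are nonempty, after which the pigeonhole step finishes the proof.
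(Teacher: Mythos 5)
Your core argument --- choose a point of the countable dense set $D$ inside each interval $(a_\xi,b_\xi)$ and apply the pigeonhole principle to get a single $c$ working for uncountably many $\xi$ --- is exactly the paper's proof and is correct. The long digression about pairs with empty open interval is unnecessary, though: the lemma assumes $L$ itself is dense, so for every $\xi$ there is some element strictly between $a_\xi$ and $b_\xi$, and separability (as defined in the paper) then immediately supplies $d_\xi \in D$ with $a_\xi <_L d_\xi <_L b_\xi$; the degenerate case you treat as the main obstacle simply cannot occur, and the proof is the one-line pigeonhole argument you describe.
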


\begin{proof}
	Fix a countable dense subset $D$ of $L$. 
	For each $\xi < \omega_1$, fix $c_\xi \in D$ such that 
	$a_\xi <_L c_\xi <_L b_\xi$. 
	Since $D$ is countable, we can find $X \subseteq \omega_1$ uncountable and $c \in L$ 
	such that for all $\xi \in D$, $c_\xi = c$.
\end{proof}

\begin{proposition}
	Suppose that $L$ is a dense separable linear order. 
	Then for all $2 \le n < \omega$, $L$ is $n$-entangled iff $L$ is weakly $n$-entangled. 
	In particular, $L$ is entangled iff $L$ is weakly entangled.
\end{proposition}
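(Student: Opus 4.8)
The plan is to prove the forward direction trivially and the reverse direction by extracting a separated uncountable subsequence from an arbitrary pairwise disjoint sequence of increasing $n$-tuples, using the density and separability of $L$, and then invoking weak $n$-entangledness.

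For the forward implication: every separated sequence of pairwise disjoint increasing $n$-tuples is in particular a pairwise disjoint sequence of increasing $n$-tuples, so if $L$ is $n$-entangled then it realizes every type, hence $L$ is weakly $n$-entangled.

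For the reverse implication, suppose $L$ is weakly $n$-entangled and fix a pairwise disjoint sequence $\langle (a_{\xi,0},\ldots,a_{\xi,n-1}) : \xi < \omega_1 \rangle$ of increasing $n$-tuples from $L$ together with a type $g : n \to 2$; I want to show the sequence realizes $g$. First I would apply Lemma 3.12 iteratively: for $i = 0$, apply it to the pairwise disjoint sequence of increasing pairs $\langle (a_{\xi,0},a_{\xi,1}) : \xi < \omega_1 \rangle$ to obtain an uncountable $X_0 \subseteq \omega_1$ and $c_0 \in L$ with $a_{\xi,0} <_L c_0 <_L a_{\xi,1}$ for all $\xi \in X_0$; then, given $X_{i-1}$, apply Lemma 3.12 to $\langle (a_{\xi,i},a_{\xi,i+1}) : \xi \in X_{i-1} \rangle$ (which is again a pairwise disjoint sequence of increasing pairs, indexed by an uncountable set, so the proof of Lemma 3.12 applies verbatim after reindexing) to obtain an uncountable $X_i \subseteq X_{i-1}$ and $c_i \in L$ with $a_{\xi,i} <_L c_i <_L a_{\xi,i+1}$ for all $\xi \in X_i$. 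After $n-1$ steps, set $X := X_{n-2}$; then for all $\xi \in X$ and all $i < n-1$ we have $a_{\xi,i} <_L c_i <_L a_{\xi,i+1}$, so $\langle (a_{\xi,0},\ldots,a_{\xi,n-1}) : \xi \in X \rangle$ is a separated sequence of pairwise disjoint increasing $n$-tuples indexed by an uncountable subset of $\omega_1$. By the characterization of weak $n$-entangledness noted after Definition 3.9, this subsequence realizes $g$, and therefore so does the original sequence. This proves $L$ is $n$-entangled.

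Finally, for the last sentence: since every uncountable linear order is $1$-entangled and $2$-entangled implies $1$-entangled, $L$ is entangled iff $L$ is $n$-entangled for all $2 \le n < \omega$, which by what was just shown holds iff $L$ is weakly $n$-entangled for all $2 \le n < \omega$, i.e.\ iff $L$ is weakly entangled. There is no real obstacle here; the only point requiring a little care is checking that Lemma 3.12 may be reapplied along a thinned, uncountable index set, which is immediate since its statement and proof only use that the index set is order-isomorphic to $\omega_1$.
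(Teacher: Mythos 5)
Your proof is correct and takes essentially the same route as the paper: the paper likewise applies its Lemma 3.10 (your ``Lemma 3.12'') inductively $n-1$ times to extract an uncountable separated subsequence and then invokes the characterization of weak $n$-entangledness for sequences indexed by uncountable subsets of $\omega_1$. The only difference is that you spell out the induction and the reindexing explicitly, which the paper leaves implicit.
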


\begin{proof}
	The forward direction is immediate. 
	For the reverse direction, assume that $L$ is weakly $n$-entangled, and 
	let $\langle \vec x_i : i < \omega_1 \rangle$ be a sequence of pairwise disjoint 
	increasing $n$-tuples of $L$. 
	Using Lemma 3.10 inductively $n-1$ many times, we can find an uncountable set $X \subseteq \omega_1$ 
	such that the sequence $\langle \vec x_i : i \in X \rangle$ is separated. 
	Since $L$ is weakly $n$-entangled, this sequence realizes any type, hence so does the original sequence.
	\end{proof}

\begin{proposition}
	For all $2 \le n < \omega$, an uncountable set of reals is $n$-entangled iff it is weakly $n$-entangled. 
	In particular, an uncountable set of reals is entangled iff it is weakly entangled.
	\end{proposition}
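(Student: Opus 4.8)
The plan is to reduce the statement to Proposition 3.11. An uncountable set of reals $L$, regarded as a linear order, need not be dense; but the only way density can fail is that $L$ has a \emph{jump}, meaning a pair $a <_L b$ with no element of $L$ strictly between them. The key point is that $L$ has at most countably many jumps: if $(a,b)$ and $(a',b')$ are distinct jumps, then the corresponding open intervals of $\mathbb{R}$ are disjoint and nonempty, so one can injectively select a rational out of each. Moreover $L$ is separable in the sense of Definition 2.1: choosing one point of $L$ from each of the countably many open intervals with rational endpoints that meet $L$ yields a countable $D \subseteq L$ witnessing that $L$ is separable. Thus the countably many jumps of $L$ constitute the sole obstruction to running the arguments of Lemma 3.10 and Proposition 3.11, and they can be sidestepped by discarding countably many indices from the given sequence of $n$-tuples.

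For the nontrivial direction, assume $L$ is weakly $n$-entangled and let $\langle \vec x_\xi : \xi < \omega_1 \rangle$, with $\vec x_\xi = (a_{\xi,0},\dots,a_{\xi,n-1})$, be a sequence of pairwise disjoint increasing $n$-tuples of $L$. Since the tuples are pairwise disjoint, each real occurs in at most one of them, so for each $i < n-1$ the map $\xi \mapsto (a_{\xi,i},a_{\xi,i+1})$ is injective; hence only countably many $\xi$ can have $(a_{\xi,i},a_{\xi,i+1})$ equal to a jump. Discarding these countably many indices over all $i < n-1$, we may assume that for every $\xi$ and every $i < n-1$ there is an element of $L$ strictly between $a_{\xi,i}$ and $a_{\xi,i+1}$, hence, by the defining property of $D$, an element $c_{\xi,i} \in D$ with $a_{\xi,i} <_L c_{\xi,i} <_L a_{\xi,i+1}$. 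Since $D^{n-1}$ is countable, on an uncountable set $X \subseteq \omega_1$ the tuple $(c_{\xi,0},\dots,c_{\xi,n-2})$ equals a fixed $(c_0,\dots,c_{n-2})$; fixing any $\xi \in X$ and using $a_{\xi,i} <_L c_i <_L a_{\xi,i+1} <_L c_{i+1}$ for $i < n-2$ shows $c_0 <_L \dots <_L c_{n-2}$, so $\langle \vec x_\xi : \xi \in X \rangle$ is separated. By weak $n$-entangledness (applied after re-enumerating $X$ by $\omega_1$, or via the remark following Definition 3.7) this subsequence realizes every type $g : n \to 2$, hence so does the original sequence, and $L$ is $n$-entangled. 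The ``in particular'' clause then follows by letting $n$ range over all integers $\ge 2$, recalling that every linear order is $1$-entangled.

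The genuine content is the fact that a set of reals has only countably many jumps, together with the observation that pairwise disjointness of the $n$-tuples confines the ``bad'' indices to a countable set — at most one per jump per coordinate. I expect this to be the crux, though it is not hard; the rest is a routine transcription of Lemma 3.10 and Proposition 3.11. The one point needing care is that the separating elements $c_i$ must lie in $L$ itself, not merely in $\mathbb{Q}$, which is exactly why separability of $L$ — and not merely of $\mathbb{R}$ — is what gets invoked.
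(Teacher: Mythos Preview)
Your proof is correct. Both your argument and the paper's rest on the same observation---that a set of reals has only countably many jumps---but they deploy it differently. The paper removes a countable subset $Y$ from $L$ so that $L \setminus Y$ becomes dense (and remains separable and weakly $n$-entangled), then invokes Proposition~3.11 as a black box to conclude that $L \setminus Y$ is $n$-entangled, and finally uses the remark after Definition~3.1 that adding back a countable set preserves $n$-entangledness. You instead leave $L$ intact and, given a sequence of $n$-tuples, discard the countably many indices where some adjacent pair is a jump; after that the argument of Lemma~3.10 goes through verbatim. Your route is slightly more hands-on (it inlines the proof of Proposition~3.11 rather than citing it) but in exchange avoids having to check that weak $n$-entangledness passes to the uncountable suborder $L \setminus Y$, a step the paper uses without comment.
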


\begin{proof}
	Standard arguments about the real number line show that (1) any set of reals is 
	a separable linear order, and 
	(2) for any uncountable set of reals $X$, there is a countable set $Y \subseteq X$ such that 
	$X \setminus Y$ is a dense linear order. 
	Let $X$ be an uncountable set of reals, and assume that $X$ is weakly $n$-entangled. 
	Fix a countable set $Y \subseteq X$ such that $X \setminus Y$ is dense. 
	Then $X \setminus Y$ is a dense separable linear order which is weakly $n$-entangled, 
	so by Proposition 3.11, $X \setminus Y$ is $n$-entangled. 
	Since $Y$ is countable, it follows that $X$ is $n$-entangled as well.
\end{proof}

\begin{thm}
	There exists an $\omega_1$-Knaster forcing poset which adds a Suslin line which is weakly entangled.
	\end{thm}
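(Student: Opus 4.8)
The plan is to modify the Tennenbaum-style forcing $\p$ from Section~2 so that the generic Suslin tree $\dot T$ has the additional property that its lexicographical order $\dot S$ is weakly $n$-entangled for every $n \ge 2$. The forcing remains $\omega_1$-Knaster: one expands a condition $p \in \p$ to a condition $(p, F_p)$, where $F_p$ is a finite ``promise'' of the form that records, for finitely many potential bad configurations, a commitment that we will eventually realize some prescribed type. Since this is proved for each $n$ separately and then amalgamated (weakly entangled means weakly $n$-entangled for all $n$), it suffices to handle a fixed $n$ and a fixed type $g : n \to 2$, and then take a product or iteration over all $(n,g)$; or, more cleanly, build the promises to handle all $(n,g)$ at once inside a single forcing.

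The key steps, in order: (1) Set up the forcing. A condition is a pair $(p, \mathcal{F})$ where $p \in \p$ and $\mathcal{F}$ is a finite set of ``tasks'': each task is (a name for) a separated sequence of increasing $n$-tuples together with a type $g$, represented in $p$ by finite approximations, plus a finite set of pairs of indices that have already been tried and failed. Order: $(q, \mathcal{G}) \le (p, \mathcal{F})$ if $q \le p$ in $\p$, $\mathcal{F} \subseteq \mathcal{G}$, and the end-extension $q$ respects all commitments in $\mathcal{F}$ (it does not close off the possibility of later realizing the promised types, and it does not add a new ``failure'' to any task in $\mathcal{F}$). (2) Prove the crucial Extension Lemma: given $(p, \mathcal{F})$ and a task asking to realize type $g$ for a separated sequence $\langle \vec x_\xi : \xi < \omega_1 \rangle$, and given that uncountably many of the $\vec x_\xi$ are ``active'' (their coordinates not yet decided to be below the stem), there is $\xi < \delta$ and an extension $(q, \mathcal{F})$ forcing that $\vec x_\xi$ and $\vec x_\delta$ satisfy $g$. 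Here one uses Lemma~2.8 together with Lemma~2.13 (which translates tree-relations $y <_{h(x)+1} z$ into $\dot S$-order statements $a <_{\dot S} b$): to get $a_{\xi,i} <_{\dot S} a_{\delta,i}$ one splits a common ancestor appropriately; the separatedness hypothesis — the existence of $c_i$ with $a_{\xi,i} <_S c_i <_S a_{\xi,i+1}$ — is exactly what keeps the $n$ coordinates ``in separate columns'' (via Lemma~2.11) so that these $n$ splittings can be carried out simultaneously by a single application of Lemma~2.7/2.8. (3) Prove $\omega_1$-Knaster for the augmented forcing by the usual $\Delta$-system / thinning argument as in Corollary~2.9 and Proposition~2.12, where one additionally thins so that the finite task-approximations in $\mathcal{F}_{p_\alpha}$ and $\mathcal{F}_{p_\beta}$ match below the relevant ordinal; amalgamating the promises then reduces to amalgamating the underlying conditions via Lemma~2.8. (4) Density: for each task, for each $\gamma < \omega_1$, the set of conditions that either decide the task trivially or have realized the promised type past $\gamma$ is dense — this is where the Extension Lemma is invoked, together with a bookkeeping argument showing that if $\langle \vec x_\xi : \xi < \omega_1 \rangle$ is forced to be a genuine separated sequence of pairwise disjoint increasing $n$-tuples then uncountably many indices remain active at every stage. (5) Conclude: in $V[G]$, $\dot S$ is a Suslin line by Proposition~2.11-style reasoning (the augmentation does not destroy Suslinness, again by Knaster-ness and normality), and the genericity over all tasks gives that every separated sequence realizes every type, i.e.\ $\dot S$ is weakly entangled.

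The main obstacle is the Extension Lemma (step 2), and within it, handling separatedness correctly. The subtlety is that a separated sequence need not have its $n$-tuples lying on $n$ fixed, cleanly separated ``branches'' of the tree; the witnesses $c_i \in S$ only separate the coordinates in the \emph{linear order} $S$, and one must use Lemma~2.11 (the technical lemma relating $<_S$-betweenness to tree-incomparability and heights) to extract, on an uncountable subsequence, enough tree-structure: namely that for large $\xi$, $a_{\xi,i}$ and $a_{\xi,i+1}$ are tree-incomparable with their splitting point below a fixed height determined by $c_i$. Once that is arranged, the $n$ coordinates can be extended independently — below the stems they agree with fixed finite data, and above the stems we are free to route each coordinate left or right of a freshly chosen immediate successor as dictated by $g(i)$, using Lemma~2.7 $n$ times and Lemma~2.13 to read off the $\dot S$-comparisons. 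A secondary obstacle is the bookkeeping/density argument ensuring uncountably many active indices survive; this is routine but must be stated carefully, since a malicious sequence could try to ``use up'' indices by forcing early decisions, and one rules this out exactly as in the proof of Proposition~2.11 using that $\p$ is $\omega_1$-Knaster and each condition is finite.
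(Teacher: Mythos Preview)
Your proposal takes a substantially more complicated route than the paper, and the extra machinery is both unnecessary and not fully specified. The paper uses the \emph{unmodified} forcing $\p$ of Definition~2.5 and proves directly that $\p$ already forces $\dot S$ to be weakly $n$-entangled for every $n$. There are no promises, no tasks, no side conditions, and no bookkeeping over names for sequences. The entire content of the proof is your step~(2): given a condition $p$ forcing that $\langle \vec a_\xi \rangle$ is a separated sequence and given a type $g$, one extends $p$ to a condition forcing that some pair $\vec a_\xi, \vec a_\beta$ satisfies $g$. Once you have this extension lemma, weak entangledness follows immediately by density --- no augmentation of $\p$ is needed.

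Your step~(2) sketch is close to what the paper actually does, and you have correctly identified the role of separatedness: via Lemma~2.4, the separating points $c_i$ force the minimal upper parts $a_{\xi,i}^*$ (for distinct $i$) to be pairwise incomparable in $p_\xi$, so the $n$ coordinates can be handled independently. One then amalgamates $p_\xi$ and $p_\beta$ via Lemma~2.8, putting $a_{\xi,i} <_q a_{\beta,i}^*$ exactly for $i$ with $g(i)=1$ (this already forces $a_{\xi,i} <_{\dot S} a_{\beta,i}$), and for $i$ with $g(i)=0$ one uses Lemma~2.7 to insert fresh immediate successors with the right rational order and reads off $a_{\beta,i} <_{\dot S} a_{\xi,i}$ via Lemma~2.11.

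The problematic part of your proposal is the promises framework itself. You have not said precisely what a ``task'' is: a task apparently involves a \emph{name} for a separated sequence, but $\p$-names are not finite objects, so it is unclear how a finite condition carries one. You would then need a density argument showing that every name for a separated sequence eventually appears as a task, and another that each task is eventually fulfilled; this is an iteration-style bookkeeping that you have only gestured at. All of this evaporates once you notice that the extension lemma alone gives the result over plain $\p$.
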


\begin{proof}
	Let $\p$ be the forcing of Definition 2.5 which adds a normal Suslin tree with rational successors. 
	Recall that the conditions of $\p$ are finite trees whose elements belong to 
	$\q^* = \bigcup \{ \q_\alpha : \alpha < \omega_1 \}$, where 
	$\q_\alpha = \{ (\alpha,q) : q \in \q \}$ for each $\alpha < \omega_1$. 
	The function $h$ maps an element $(\alpha,q)$ in $\q^*$ to $\alpha$. 
	For each $p \in \p$ and $\alpha < \omega_1$, $p \restrict \alpha$ and $p \setminus \alpha$ 
	are the subtrees of $p$ consisting of nodes $x \in p$ such that $h(x) < \alpha$ or $h(x) \ge \alpha$ 
	respectively. 
	Let $\dot T$ be a $\p$-name for the generic tree and 
	let $\dot S$ be a $\p$-name for the lexicographical ordering of $T$.

	We will prove that $\p$ forces that $\dot S$ is weakly $n$-entangled for all $2 \le n < \omega$. 
	Fix $2 \le n < \omega$ and a function $g : n \to 2$. 
	Suppose that $p \in \p$, $c_0, \ldots, c_{n-2}$ are in $\q^*$, and 
	$p$ forces that 
	$$
	\langle (\dot a_{\xi,0},\ldots,\dot a_{\xi,n-1}) : \xi < \omega_1 \rangle
	$$
	is a pairwise disjoint sequence of increasing $n$-tuples of $\dot S$ such that 
	for all $i < n-1$, $\dot a_{\xi,i} <_{\dot S} c_i <_{\dot S} \dot a_{\xi,i+1}$. 
	By extending $p$ further if necessary, we may assume without loss of generality 
	that $c_0,\ldots,c_{n-2}$ are in $p$. 
	Our goal is to find $q \le p$ and $\xi < \beta < \omega_1$ such that $q$ forces that 
	$(\dot a_{\xi,0},\ldots,\dot a_{\xi,n-1})$ and 
	$(\dot a_{\beta,0},\ldots,\dot a_{\beta,n-1})$ 
	satisfy the type $g$.

	For each $\xi < \omega_1$, fix a condition $p_\xi \le p$ 
	and elements $a_{\xi,0}, \ldots, a_{\xi,n-1}$ in $p_\xi$ 
	such that $p_\xi$ forces that for all $i < n$, 
	$\dot a_{\xi,i} = a_{\xi,i}$. 
	By a straightforward thinning out argument, we can 
	find a stationary set $U \subseteq \omega_1$ 
	of limit ordinals larger than the ordinals 
	$h(c_0),\ldots,h(c_{n-2})$ 
	such that for all $\xi < \beta$ in $U$, 
	$p_\xi \restrict \xi = p_\beta \restrict \beta$, 
	and for all $x \in p_\xi$, $h(x) < \beta$.

	By Lemma 2.8, for all $\xi < \beta$ in $U$, $p_\xi$ and $p_\beta$ are compatible. 
	It follows that the $n$-tuples 
	$(a_{\xi,0},\ldots,a_{\xi,n-1})$ and $(a_{\beta,0},\ldots,a_{\beta,n-1})$ 
	must be disjoint. 
	Thinning out further if necessary, we may assume without loss of generality that:
	\begin{enumerate}
		\item for all $\xi \in U$, the elements of the $n$-tuple $(a_{\xi,0},\ldots,a_{\xi,n-1})$ 
		are in $p_\xi \setminus \xi$;
		\item for all $\xi < \beta$ in $U$ and $i < n$, 
		the sets 
		$\{ x \in p_\xi \restrict \xi : x <_{p_\xi} a_{\xi,i} \}$ and 
		$\{ x \in p_\beta \restrict \beta : x <_{p_\beta} a_{\beta,i} \}$ are equal.
		\end{enumerate}

	For each $\xi \in U$ and $i < n$, let $a_{\xi,i}^*$ denote the minimal member of 
	the tree $p_{\xi} \setminus \xi$ which is less than or equal to $a_{\xi,i}$. 
	Then $a_{\xi,i}^*$ is minimal in the tree $p_\xi \setminus \xi$. 
	Clearly, for all $\xi < \beta$ in $U$ and $i < n$, the 
	sets 
	$\{ x \in p_\xi \restrict \xi : x <_{p_\xi} a_{\xi,i} \}$ and 
	$\{ x \in p_\beta \restrict \beta : x <_{p_\beta} a_{\beta,i}^* \}$ are equal.
	
	We claim that for all $\xi \in U$ and $i < j < n$, $a_{\xi,i}^* \ne a_{\xi,j}^*$, and hence 
	$a_{\xi,i}$ and $a_{\xi,j}$ are incomparable in $p_\xi$. 
	Recall that $p_\xi$ forces that $a_{\xi,i} <_{\dot S} c_i <_{\dot S} a_{\xi,j}$. 
	If $a_{\xi,i}^* = a_{\xi,j}^*$, then $a_{\xi,i}^*$ is below 
	both $a_{\xi,i}$ and $a_{\xi,j}$ in $p_\xi$.  
	By Lemma 2.4, it follows that 
	$p_\xi$ forces that the height of $a_{\xi,i}^*$ in $\dot T$ is less than the 
	height of $c_i$ in $\dot T$. 
	This is impossible since $h(a_{\xi,i}^*) \ge \xi > h(c_i)$, and the functions 
	$h$ and $\h_{\dot T}$ are forced to be equal.
	
	Fix $\xi < \beta$ in $U$. 
	Let $y := \{ i < n : g(i) = 1 \}$. 
	Applying Lemma 2.8, we can find $q \le p_\xi, p_\beta$ whose underlying set is 
	equal to the union of the underlying sets of $p_\xi$ and $p_\beta$ 
	such that for all $b \in p_\xi \setminus \xi$ and $c \in p_\beta \setminus \beta$, 
	$b <_{q} c$ iff for some $i \in y$, 
	$b \le_{p_\xi} a_{\xi,i}$ and $a_{\beta,i}^* \le_{p_\beta} c$. 
	In particular, for all $i < n$ with $g(i) = 1$, 
	$a_{\xi,i} <_{q} a_{\beta,i}$. 
	By the definition of the lexicographical order $\dot S$, 
	$q$ forces that $a_{\xi,i} <_{\dot S} a_{\beta,i}$.

	To complete the proof, it suffices to find $r \le q$ which forces that for all 
	$i \in n \setminus y$, $a_{\beta,i} <_{\dot S} a_{\xi,i}$. 
	As shown above, for all $i < j < n$, $a_{\beta,i}^* \ne a_{\beta,j}^*$, and for each $i < n$, 
	$a_{\xi,i}^*$ and $a_{\beta,i}^*$ have the same immediate predecessor $d_i$ in 
	$p_\xi \restrict \xi = p_\beta \restrict \beta$. 
	So the set 
	$$
	\{ (d_i, a_{\xi,i}^*) : i \in n \setminus y \} \cup 
	\{ (d_i, a_{\beta,i}^* ) : i \in n \setminus y \}
	$$
	is a finite family of distinct pairs such that  
	the first member of each pair is the immediate predecessor in the tree $q$ 
	of the second member. 
	Also, since $\xi$ is a limit ordinal, 
	$h(d_i) < \xi \le h(a_{\xi,i}^*), h(a_{\beta,i}^*)$ for all $i \in n \setminus y$.
	
	Since $q$ is finite and the rationals are infinite, 
	we can find a family of distinct rationals 
	$$
	\{ q_{\delta,i} : \delta \in \{ \beta, \xi \}, i \in n \setminus y \}
	$$ 
	such that $q_{\beta,i} < q_{\xi,i}$ for all $i \in n \setminus y$, 
	and none of these rationals appear in $q$. 
	Let $x_i := (h(d_i)+1,q_{\beta,i})$ and 
	$y_i := (h(d_i)+1,q_{\xi,i})$ be in $\q_{h(d_i)+1}$ for all $i \in n \setminus y$. 
	By Lemma 2.7, we can find $r \le q$ such that for all 
	$i \in n \setminus y$, $d_i <_r x_i <_r a_{\beta,i}^*$ and 
	$d_i <_r y_i <_r a_{\xi,i}^*$. 
	Then for each $i \in n \setminus y$, $d_i <_r x_i <_r a_{\beta,i}$, 
	$d_i <_r y_i < a_{\xi,i}$, $x_i <_{\h(d_i)+1} y_i$, and $h(x_i)$ and $h(y_i)$ are equal to $h(d_i) + 1$. 
	By Lemma 2.11, for all $i \in n \setminus y$, $r$ forces that 
	$a_{\beta,i} <_{\dot S} a_{\xi,i}$.
	\end{proof}

\section{Entangledness in Trees}

It is reasonable to ask whether the idea of entangledness has any significance for 
partial orders other than linear orders. 
In this section we address this question in the class of $\omega_1$-trees. 

Consider an $\omega_1$-tree $T$. 
Roughly speaking, $T$ being $n$-entangled should mean that any pairwise disjoint 
sequence of $n$-tuples $\langle (a_{\xi,0},\ldots,a_{\xi,n-1}) : \xi < \omega_1 \rangle$ of nodes of $T$, 
satisfying some property, realizes in some sense any type $g : n \to 2$. 
We will need to specify a reasonable property of sequences to assume 
and to define what it means to satisfy a given type. 
Note that any pairwise disjoint sequence of $n$-tuples has an uncountable subsequence such that any 
node in one of the $n$-tuples has height less than the nodes appearing in $n$-tuples with larger index. 
But in that case, the only possible relations between 
a node $a$ in one $n$-tuple and a node $b$ in another with larger height is that either 
$a <_T b$ or $a \not <_T b$. 
Thus, for satisfying a type, 
it makes sense to indicate $a <_T b$ with the integer $1$ and $a \not <_T b$ with the integer $0$. 

A natural definition of $T$ being $1$-entangled would be that for any sequence  
$\langle a_\xi : \xi < \omega_1 \rangle$ 
of nodes in $T$, where $\h_T(a_\xi) < \h_T(a_\beta)$ for all $\xi < \beta < \omega_1$, 
there exist $\xi < \beta < \omega_1$ such that $a_{\xi} <_T a_{\beta}$, and 
there exist $\gamma < \delta$ such that $a_{\gamma} \not <_T a_\delta$. 
Thus, $T$ is $1$-entangled iff $T$ has no uncountable antichains and no uncountable chains, in other 
words, iff $T$ is Suslin.
Thus, all further discussion of entangledness in $\omega_1$-trees will be restricted to Suslin trees. 

Suppose that the tree $T$ is Suslin. 
Let us consider $n$-entangledness in $T$ for $n > 1$. 
We first observe that, since $T$ is Suslin, 
for all $\gamma < \omega_1$ there exists a node $x$ with height $\gamma$ which has 
two incomparable nodes above it. 
For suppose otherwise that $\gamma$ is a counterexample. 
Since the levels of $T$ are countable, we can find 
uncountably many nodes of height greater than $\gamma$ which have 
the same node below them on level $\gamma$. 
By the choice of $\gamma$, any two such nodes are comparable, and hence they form 
an uncountable chain, contradicting that $T$ is Suslin.

The following argument shows that some additional restriction must be placed on the 
sequences referred to in the definition of $2$-entangled. 
Using the observation from the previous paragraph, we can build by recursion a sequence of triples 
$\langle (x_i,y_i,z_i) : i < \omega_1 \rangle$ satisfying:
\begin{enumerate}
	\item for all $i < \omega_1$, 
	$x_i <_T y_i$, $x_i <_T z_i$, and $y_i$ and $z_i$ are incomparable;
	\item for all $i < j < \omega_1$, 
	$\h_T(y_i)$ and $\h_T(z_i)$ are less than $h_T(x_j)$.
	\end{enumerate}
Now consider the pairwise disjoint sequence $\langle (y_\xi,z_\xi) : \xi < \omega_1 \rangle$. 
We claim that this sequence does not realize the type $(1,1)$. 
Consider $\xi < \beta < \omega_1$, and suppose that $y_\xi <_T y_\beta$. 
Since $x_\beta <_T y_\beta$ and $\h_T(y_\xi) < \h_T(x_\beta)$, 
it follows that $y_\xi <_T x_\beta$. 
Since $x_\beta <_T z_\beta$, we have that $y_\xi <_T z_\beta$. 
But $y_\xi$ and $z_\xi$ are incomparable, so they cannot both be below the node $z_\beta$. 
Hence, $z_\xi \not <_T z_\beta$. 
Thus, the type $(1,1)$ is not realized.

Note that in the above example, for all $\xi < \omega_1$, $\Delta(y_\xi,z_\xi) = \h_T(x_\xi)+1$. 
In particular, the set $\{ \Delta(y_\xi,z_\xi) : \xi < \omega_1 \}$ is unbounded in $\omega_1$. 
Thus, the following definition of $n$-entangledness excludes this example.

\begin{definition}
	Let $1 \le n < \omega$. 
	An $\omega_1$-tree $T$ is \emph{$n$-entangled} if 
	for all sequences $\langle (a_{\xi,0}, \ldots, a_{\xi,n-1}) : \xi < \omega_1 \rangle$ of 
	injective $n$-tuples which satisfy that the set 
	$$
	\{ \Delta(a_{\xi,i},a_{\xi,j}) : i < j < n, 
	\ \xi < \omega_1 \}
	$$ 
	is bounded in $\omega_1$, for all $g : n \to 2$ there exist $\xi < \beta < \omega_1$ such that 
	for all $i < n$, $a_{\xi,i} <_T a_{\beta,i}$ iff $g(i) = 1$. 
	The tree $T$ is \emph{entangled} if it is $n$-entangled for all $1 \le n < \omega$. 
	\end{definition}

We carry over all of the terminology about entangledness in linear orders which was introduced after Definition 3.1
to entangledness in $\omega_1$-trees in the obvious way. 
In addition, we introduce the following.

\begin{definition}
	Let $1 \le n < \omega$, $X \subseteq \omega_1$, and 
	$\langle (a_{\xi,0}, \ldots, a_{\xi,n-1}) : \xi \in X \rangle$ 
	a sequence of injective $n$-tuples of nodes in an $\omega_1$-tree $T$. 
	The sequence has \emph{increasing height} 
	if for all $\xi < \beta$ in $X$, for all $i, j < n$, 
	$\h_T(a_{\xi,i}) < \h_T(a_{\beta,j})$. 
	For $\vec c = (c_0,\ldots,c_{n-1}) \in T^n$, the sequence is 
	\emph{above $\vec c$} if for all $\xi \in X$ and $i < n$, $c_i <_T a_{\xi,i}$.
\end{definition}

The following characterization of $n$-entangledness in $\omega_1$-trees will prove useful.

\begin{lemma}
	Let $1 \le n < \omega$. 
	An $\omega_1$-tree $T$ is $n$-entangled iff for any  
	injective $\vec c \in T^n$, 
	for any sequence 
	$\langle (a_{\xi,0},\ldots,a_{\xi,n-1}) : 
	\xi < \omega_1 \rangle$ of $n$-tuples of nodes of $T$ 
	above $\vec c$ with increasing height, 
	for all $g : n \to 2$ there exist $\xi < \beta < \omega_1$ 
	such that for all $i < n$, $a_{\xi,i} <_T a_{\beta,i}$ iff $g(i) = 1$.
	\end{lemma}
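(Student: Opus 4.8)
The plan is to prove the two directions of the biconditional separately, with the forward direction being almost immediate and the reverse direction requiring a thinning-out argument.

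For the forward direction, suppose $T$ is $n$-entangled, fix an injective $\vec c \in T^n$, and let $\langle (a_{\xi,0},\ldots,a_{\xi,n-1}) : \xi < \omega_1 \rangle$ be a sequence of $n$-tuples above $\vec c$ with increasing height. The $n$-tuples are injective since the height condition forces $a_{\xi,i} \ne a_{\xi,j}$ once we know $c_i <_T a_{\xi,i}$ and $c_j <_T a_{\xi,j}$ — actually I should be careful here: injectivity of the tuple $(a_{\xi,0},\ldots,a_{\xi,n-1})$ at a fixed $\xi$ follows because the $c_i$ are distinct nodes, so for $i \ne j$ we have $\pr_T(a_{\xi,i},\h_T(c_i)) $ versus $\pr_T(a_{\xi,j},\cdot)$; more simply, if $a_{\xi,i} = a_{\xi,j}$ then both $c_i$ and $c_j$ lie below this common node, hence are comparable, and being of possibly different heights this is consistent — so I actually need $\vec c$ injective to conclude $\Delta(a_{\xi,i},a_{\xi,j}) \le \max(\h_T(c_i),\h_T(c_j)) + 1$, which bounds the $\Delta$-set by $\sup_i \h_T(c_i) + 1 < \omega_1$. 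Thus the sequence satisfies the hypothesis of Definition 4.1, so it realizes every type $g$, giving the desired $\xi < \beta$.

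For the reverse direction, assume the right-hand condition and let $\langle (a_{\xi,0},\ldots,a_{\xi,n-1}) : \xi < \omega_1 \rangle$ be any sequence of injective $n$-tuples whose $\Delta$-set is bounded, say by $\gamma < \omega_1$. First I would pass to an uncountable subsequence of increasing height: since each level is countable, a standard pressing-down or simple counting argument produces an uncountable $X$ on which the heights are strictly increasing in the required sense (discard the countably many tuples with a node of height $\le \gamma$ first if convenient, then thin). Next, for each $\xi$ and each $i < n$, set $c_{\xi,i} := \pr_T(a_{\xi,i},\gamma)$; since $\Delta(a_{\xi,i},a_{\xi,j}) \le \gamma$ for $i \ne j$, the nodes $c_{\xi,0},\ldots,c_{\xi,n-1}$ are pairwise distinct, so $\vec c_\xi := (c_{\xi,0},\ldots,c_{\xi,n-1})$ is an injective element of $T^n$ (all coordinates at level $\gamma$). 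Since level $\gamma$ is countable, $T^n_\gamma$ is countable, so by thinning $X$ further we may assume $\vec c_\xi = \vec c$ is constant on $X$. Then $\langle (a_{\xi,0},\ldots,a_{\xi,n-1}) : \xi \in X \rangle$ is a sequence above the fixed injective $\vec c$ with increasing height, so by hypothesis it realizes every type $g$, and the witnessing pair $\xi < \beta$ works for the original sequence.

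The main obstacle is the bookkeeping in the reverse direction: making sure the two successive thinnings (to increasing height, and to constant projection $\vec c$) are compatible and that the projection nodes really are pairwise distinct — this is exactly where the boundedness of the $\Delta$-set is used, since $\Delta(a_{\xi,i},a_{\xi,j}) \le \gamma$ means $a_{\xi,i}$ and $a_{\xi,j}$ already diverge strictly below level $\gamma$, hence have different nodes at level $\gamma$. One must also double-check that ``realizes every type'' transfers from a subsequence back to the whole sequence, but this is immediate from the definition since realizing a type only asserts the existence of one good pair of indices. I do not expect any genuine difficulty beyond this routine care.
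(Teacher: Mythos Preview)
Your argument follows essentially the same route as the paper's: bound the $\Delta$-set using $\vec c$ for the forward direction, and for the reverse direction project each tuple to a fixed level, thin to a constant injective $\vec c$, and thin to increasing height. One clarification for the forward direction: since $\vec c \in T^n$, all the $c_i$ have the \emph{same} height by definition, so distinct $c_i,c_j$ are automatically incomparable; this immediately gives both the injectivity of each $n$-tuple and the bound $\Delta(a_{\xi,i},a_{\xi,j}) \le \h_T(\vec c)$, so your hedging about ``possibly different heights'' is unnecessary.
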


\begin{proof}
	For the forward implication, assume that $T$ is $n$-entangled, and consider a sequence 
	$\langle (a_{\xi,0},\ldots,a_{\xi,n-1}) : \xi < \omega_1 \rangle$ of $n$-tuples of nodes of $T$ 
	with increasing height and above some injective $\vec c \in T^n$. 
	Recall that the elements of $\vec c$ all have the same height. 
	So for all $\xi < \omega_1$ and $i < j < n$, $c_i$ is below $a_{\xi,i}$ but 
	not below $a_{\xi,j}$, and therefore $\Delta(a_{\xi,i},a_{\xi,j})$ 
	is less than or equal to the height of $c_i$. 
	Hence, the set 
	$\{ \Delta(a_{\xi,i},a_{\xi,j}) : i < j < n, 
	\ \xi < \omega_1 \}$ is bounded by the height of 
	$\vec c$ in $T^n$. 
	Since $T$ is $n$-entangled, this sequence realizes any type.
	
	For the reverse implication, assume that the second statement of the lemma holds, 
	and suppose that 
	$\langle (a_{\xi,0}, \ldots, a_{\xi,n-1}) : \xi < \omega_1 \rangle$ is a sequence of 
	injective $n$-tuples which satisfies that for some $\delta < \omega_1$, 
	every element of the set 
	$$
	\{ \Delta(a_{\xi,i},a_{\xi,j}) : i < j < n, 
	\ \xi < \omega_1 \}
	$$ 
	is less than $\delta$. 
	Then for each $\xi < \omega_1$, the $n$-tuple 
	$$
	(\pr_T(a_{\xi,0},\delta),\ldots,\pr_T(a_{\xi,n-1},\delta))
	$$
	is an injective member of $T^n$ of height $\delta$. 
	Now fix $\vec c \in T^n$ and an uncountable set $X \subseteq \omega_1$ 
	such that for all $\xi \in X$, 
	$$
	\vec c = (\pr_T(a_{\xi,0},\delta),\ldots,\pr_T(a_{\xi,n-1},\delta)).
	$$
	Thinning out further if necessary, we may assume that the sequence indexed by $X$ has 
	increasing height. 
	By reindexing if necessary, we may assume that $X = \omega_1$. 
	By assumption, this sequence realizes any type, and hence 
	our original sequence realizes any type.
\end{proof}

Suppose that $T$ is a Suslin tree. 
Let $2 \le n < \omega$. 
Fix an injective $\vec c \in T^n$, and consider the derived tree $T_{\vec c}$. 
If $T_{\vec c}$ is not Suslin, then there exists a sequence 
$\langle (a_{\xi,0},\ldots,a_{\xi,n-1}) : \xi < \omega_1 \rangle$ of distinct nodes 
in $T_{\vec c}$ such that for all $\xi < \beta < \omega_1$, 
$(a_{\xi,0},\ldots,a_{\xi,n-1})$ and $(a_{\beta,0},\ldots,a_{\beta,n-1})$ 
are not comparable in $T_{\vec c}$. 
By thinning out if necessary, we may assume that this sequence has increasing height. 
So we get a sequence of $n$-tuples above $\vec c$ with increasing height 
which does not realize the type $g : n \to 2$, where $g(i) = 1$ for all $i < n$. 
It follows that any $\omega_1$-tree which is $n$-entangled is $n$-free, meaning that all of its 
derived trees of dimension $n$ are Suslin. 
In particular, if $T$ has a derived tree with dimension $n$ which is special, then 
$T$ is not $n$-entangled.

\begin{thm}
	Let $1 \le n < \omega$. 
	Then there exists an $\omega_1$-Knaster forcing 
	which adds a normal Suslin tree which is $n$-entangled and whose derived 
	trees with dimension $n+1$ are all special. 
	In particular, $n$-entangled does not imply $(n+1)$-entangled.
	\end{thm}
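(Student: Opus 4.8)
The plan is to define a single finite-condition forcing $\m$ which adds a normal Suslin tree $\dot T$ with rational successors (exactly as in Section 2) together with a function $\dot f$ specializing the $(n+1)$st power $\dot T^{n+1}$, and to arrange by genericity that $\dot T$ is $n$-entangled. A condition of $\m$ is a pair $(p,f)$, where $p \in \p$ (the forcing of Definition 2.5) and $f$ is a finite partial function whose domain is a set of injective $(n+1)$-tuples of nodes of $p$ all of the same $h$-value, whose range is a finite subset of $\omega$, and which satisfies $f(\vec x) \ne f(\vec y)$ whenever $\vec x, \vec y \in \dom(f)$ and $x_i <_p y_i$ for all $i < n+1$. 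Order $\m$ by $(q,g) \le (p,f)$ iff $q \le p$ in $\p$ and $g \supseteq f$. Given a generic filter $G$, the tree $\dot T$ whose nodes and relations come from the first coordinates of conditions of $G$ is, as in Section 2, a normal $\omega_1$-tree with rational successors whose level $\alpha$ is $\q_\alpha$, with $h = \h_{\dot T}$; and a routine density argument (given $(p,f)$ and an injective same-$h$-value $(n+1)$-tuple $\vec x$ over $\q^*$, first extend $p$ to contain the coordinates of $\vec x$ as isolated nodes, then give $\vec x$ a value avoiding the finitely many $f$-values on tuples currently $<_p$-comparable to it) shows $\dot f := \bigcup \{ f : (p,f) \in G \}$ is a total function on the injective same-height $(n+1)$-tuples of $\dot T$ with $\vec x <_{\dot T^{n+1}} \vec y \Rightarrow \dot f(\vec x) \ne \dot f(\vec y)$. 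Since every node of a derived tree $T_{\vec a}$ of dimension $n+1$ (with $\vec a$ injective) is an injective tuple of nodes of one height, $\dot f \restriction T_{\vec a}$ specializes $T_{\vec a}$; thus $\m$ forces that all derived trees of dimension $n+1$ are special, and in particular that $\dot T$ is not $(n+1)$-entangled.

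For the Knaster property, given $\langle (p_\alpha,f_\alpha) : \alpha < \omega_1 \rangle$, thin out by the usual pressing-down argument to a stationary set on which $p_\alpha \restrict \alpha = p_\beta \restrict \beta$, $h(x) < \beta$ for all $x \in p_\alpha$ whenever $\alpha < \beta$, and $f_\alpha$, $f_\beta$ agree on all tuples whose coordinates have $h$-value below $\alpha$. For $\xi < \beta$ in this set, Lemma 2.8 with $n = 0$ yields $r \in \p$ end-extending $p_\xi$ and $p_\beta$, with underlying set $p_\xi \cup p_\beta$, adding no relations between $p_\xi \setminus \xi$ and $p_\beta \setminus \beta$. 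Then $f_\xi \cup f_\beta$ is a function (the overlap of the domains consists of low tuples on which the two agree), and it is legal for $r$: any coordinatewise $<_r$-comparability between tuples in $\dom(f_\xi) \cup \dom(f_\beta)$ is witnessed entirely within $p_\xi$ or entirely within $p_\beta$, and in either case both tuples lie in one of $\dom(f_\xi), \dom(f_\beta)$. So $(r, f_\xi \cup f_\beta) \le (p_\xi,f_\xi),(p_\beta,f_\beta)$, and $\m$ is $\omega_1$-Knaster.

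Next, $\m$ forces $\dot T$ $n$-entangled (and, taking $n = 1$ in the argument below, Suslin). By Lemma 4.3 it suffices, for any injective $\vec c \in \dot T^n$, any sequence $\langle \vec a_\xi : \xi < \omega_1 \rangle$ of $n$-tuples of $\dot T$ above $\vec c$ with increasing height, and any $g : n \to 2$, to show densely many conditions force the sequence to realize $g$. Given $(p_0,f_0)$ forcing this data, assume the coordinates of $\vec c$ lie in $p_0$, pick $(p_\xi,f_\xi) \le (p_0,f_0)$ deciding $\vec a_\xi = (a_{\xi,0},\dots,a_{\xi,n-1})$, and thin out exactly as in the proof of Theorem 3.13 (coordinates of $\vec a_\xi$ in $p_\xi \setminus \xi$; predecessor sets below the cut match; the low parts of the $f_\xi$ agree; and, using that the coordinates of $\vec c$ are pairwise incomparable, the minimal nodes $a_{\xi,i}^*$ of $p_\xi \setminus \xi$ below $a_{\xi,i}$ are pairwise distinct). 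Fix $\xi < \beta$, set $y := \{ i < n : g(i) = 1 \}$, and apply Lemma 2.8 to amalgamate $p_\xi, p_\beta$ adding exactly the relations $a_{\xi,i} <_r a_{\beta,i}^*$ for $i \in y$. The crucial point is that $f_\xi \cup f_\beta$ stays legal for the amalgam $r$: a new coordinatewise comparability $\vec u <_r \vec v$ between tuples of $\dom(f_\xi) \cup \dom(f_\beta)$ would force every coordinate $u_k$ to satisfy $u_k \le_{p_\xi} a_{\xi,i_k}$ for some $i_k \in y$; since $\vec u$ has $n+1$ distinct coordinates of one height but $|y| \le n$, two of them would lie below the same $a_{\xi,i}$ and hence be comparable, contradicting injectivity; so no such comparability is created and legality reduces to that of $f_\xi$ and $f_\beta$. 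For $i \in y$ this gives $a_{\xi,i} <_r a_{\beta,i}$, so $r$ forces $a_{\xi,i} <_{\dot T} a_{\beta,i}$. For $i \notin y$, the branch of $p_\beta$ below $a_{\beta,i}^*$ has no node of height in $[\xi,\beta)$, so by Lemma 2.7 we extend $r$ to $r'$ by inserting, for each such $i$, a node $w_i$ of height $h(a_{\xi,i})$ on that branch (below $a_{\beta,i}^*$) using a rational not occurring in $r$; then $r'$ forces $\pr_{\dot T}(a_{\beta,i},h(a_{\xi,i})) = w_i \ne a_{\xi,i}$, i.e. $a_{\xi,i} \not<_{\dot T} a_{\beta,i}$. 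As $r'$ adds no comparabilities among the nodes of $r$, $f_\xi \cup f_\beta$ is still legal, and $(r', f_\xi \cup f_\beta) \le (p_0,f_0)$ forces that $\vec a_\xi$ and $\vec a_\beta$ realize $g$. (The same amalgamation, with the single new relation $a_\xi <_r a_\beta^*$, kills any putative uncountable antichain of $\dot T$ exactly as in Proposition 2.10, the $f$-part being handled by the same pigeonhole since $n+1 \ge 2$.)

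The main obstacle is exactly this compatibility of the tree-amalgamation with the specialization coordinate: each time we add tree-relations (to realize a type, or to defeat an antichain) one must check that $f_\xi$ and $f_\beta$ do not combine into an illegal specialization function. This goes through because the new relations are always rooted at the coordinate nodes $a_{\xi,i}$, so any new coordinatewise comparability between two domain-tuples would force the coordinates of the lower tuple onto a single branch of $p_\xi$ — impossible for an injective tuple of length $n+1 \ge 2$ of nodes of one height. Everything else is a routine adaptation of the arguments already given for $\p$, for Proposition 2.10, and for Theorem 3.13.
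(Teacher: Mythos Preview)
Your proof is correct and follows essentially the same approach as the paper: a finite-condition forcing pairing a tree part with a specialization part, with $\omega_1$-Knasterness and $n$-entangledness both verified via Lemma~2.8 amalgamations and the pigeonhole bound $|y| \le n < n+1$. Two minor differences worth noting: you specialize the injective part of $\dot T^{n+1}$ globally while the paper specializes each derived tree $\dot T_{\vec c}$ separately (your version makes the pigeonhole step slightly cleaner, since you can argue directly with the coordinates $u_k$ rather than tracing back to a common base $\vec c$); and your insertion of the witness nodes $w_i$ for $i \notin y$ is unnecessary, since the amalgam $r$ already contains both $a_{\xi,i}$ and $a_{\beta,i}$ with $a_{\xi,i} \not<_r a_{\beta,i}$, and end-extension then forces $a_{\xi,i} \not<_{\dot T} a_{\beta,i}$.
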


\begin{proof}
	The forcing we will use is a modification of the forcing $\p$ from Definition 2.5, and we 
	assume that the reader is familiar with the notation used in Section 2 about this forcing. 
	Let $p \in \p$. 
	For any $1 \le m < \omega$, let $p^m$ denote the set of all $m$-tuples $\vec a = (a_0,\dots,a_{m-1})$ 
	of nodes of $p$ such that for all $i < j < m$, $h(a_i) = h(a_j)$. 
	If $h(a_i) = \xi$ for all $i < m$, we will write $h(\vec a) = \xi$. 
	For $\vec a = (a_0,\dots,a_{m-1})$ and $\vec b = (b_0,\ldots,b_{m-1})$ in $p^m$, 
	we will write $\vec a <_p \vec b$ to indicate that 
	for all $i < m$, $a_i <_p b_i$.
	
	Define a forcing $\p^*$ as follows. 
	A condition in $\p^*$ is any pair $(p,f)$ satisfying:
	\begin{enumerate}
		\item $p \in \p$;
		\item $f$ is a function with finite domain and range a subset of $\omega$ such that 
		every member of its domain is a pair of the form 
		$(\vec c,\vec a)$, where $\vec c$ and $\vec a$ are injective tuples in $p^{n+1}$ and 
		$\vec c <_p \vec a$;
		\item if $(\vec c,\vec a)$ and $(\vec c,\vec b)$ are both in $\dom(f)$ and 
		$\vec a <_p \vec b$, then $f(\vec c,\vec a) \ne f(\vec c,\vec b)$.
		\end{enumerate}
	Let $(q,g) \le (p,f)$ if $q \le_{\p} p$ and $f \subseteq g$.

	For any condition $(p,f) \in \p^*$ and ordinal $\xi < \omega_1$, 
	let $f \restrict \xi$ be the restriction of $f$ to pairs of the form 
	$(\vec c,\vec a)$ where $h(\vec a) < \xi$.

	Let $\dot T$ be the $\p^*$-name for the 
	tree obtained as the union of all trees $p$ for which there exists 
	some $f$ such that $(p,f)$ is in the generic filter. 
	Let $\dot F$ be a $\p^*$-name for the union of all functions $f$ such that for some $p$, 
	$(p,f)$ is in the generic filter. 
	The following statements are true in $V^\p$:
	\begin{enumerate}
			\item $\dot T$ is a tree;
			\item the underlying set of $\dot T$ is equal to $\q^*$, 
			and for all $\alpha < \omega_1$, level $\alpha$ of $\dot T$ is equal to $\q_\alpha$;
			in particular, $\dot T$ is an $\omega_1$-tree;
			\item $\dot T$ is normal;
			\item the domain of $\dot F$ is equal to 
			the set of all pairs $(\vec c,\vec a)$ 
			such that $\vec c$ is an injective member of 
			$\dot T^{n+1}$ and $\vec a \in \dot T_{\vec c}$;
			\item every derived tree of $\dot T$ 
			of dimension $n+1$ is special. 
		\end{enumerate}
	The proofs of (1)--(3) are basically the same as in the analogous claims for the forcing $\p$ using Lemma 2.6. 
	(4) follows in an easy way from the fact that conditions are finite. 
	(5) is true because as a result of (4) and the definition of $\p^*$, 
	for any injective $\vec c \in \dot T^{n+1}$, 
	the function $\dot F \restrict (\{ \vec c \} \times \dot T_{\vec c})$ 
	witnesses that $\dot T_{\vec c}$ is special. 
	In addition, a proof similar to the argument that $\p$ is $\omega_1$-Knaster 
	shows that $\p^*$ is $\omega_1$-Knaster (also, this argument is implicit in what follows).

	It remains to show that $\p^*$ forces that $\dot T$ is $n$-entangled. 
	Fix a function $g : n \to 2$. 
	Suppose that $(p,f)$ is a condition in $\p^*$, 
	$\vec c = (c_0,\ldots,c_{n-1})$ is injective, and $(p,f)$ forces that 
	$\langle (\dot a_{\xi,0},\ldots,\dot a_{\xi,n-1}) : \xi < \omega_1 \rangle$ 
	is a sequence of $n$-tuples of $\dot T$ above $\vec c$ with increasing height. 
	By extending $(p,f)$ further if necessary, we may assume that $c_0,\ldots,c_{n-1}$ are in $p$. 
	We will find $(q,f^*) \le (p,f)$ and $\xi < \beta < \omega_1$ such that $(q,f^*)$ forces that 
	$(\dot a_{\xi,0},\ldots,\dot a_{\xi,n-1})$ and 
	$(\dot a_{\beta,0},\ldots,\dot a_{\beta,n-1})$ 
	satisfy the type $g$.
	
	For each $\xi < \omega_1$, fix $(p_\xi,f_\xi) \le (p,f)$ 
	and $a_{\xi,0}, \ldots, a_{\xi,n-1}$ in $p_\xi$ 
	such that $(p_\xi,f_\xi)$ forces that for all $i < n$, 
	$\dot a_{\xi,i} = a_{\xi,i}$. 
	By a straightforward thinning out argument, we can 
	find a stationary set $S \subseteq \omega_1$ 
	of limit ordinals such that for all $\xi < \beta$ in $S$, 
	$p_\xi \restrict \xi = p_\beta \restrict \beta$, 
	$f_\xi \restrict \xi = f_\beta \restrict \beta$, 
	and for all $x \in p_\xi$, $h(x) < \beta$.
	
	Consider $\xi < \beta$ in $S$. 
	By Lemma 2.8, there exists $q \le_{\p} p_\xi, p_\beta$ such that $q$ 
	contains no relations between members of $p_\xi \setminus \xi$ and $p_\beta \setminus \beta$. 
	It easily follows that the pair 
	$(q,f_\xi \cup f_\beta)$ is in $\p^*$ and is below 
	$(p_\xi,f_\xi)$ and $(p_\beta,f_\beta)$. 
	Hence, $(p_\xi,f_\xi)$ and $(p_\beta,f_\beta)$ are compatible in $\p^*$. 
	It follows that the $n$-tuples 
	$(a_{\xi,0},\ldots,a_{\xi,n-1})$ and $(a_{\beta,0},\ldots,a_{\beta,n-1})$ must be disjoint. 
	Thinning out further if necessary, we may assume without loss of generality that:
	\begin{enumerate}
		\item for all $\xi \in S$, the nodes of $(a_{\xi,0},\ldots,a_{\xi,n-1})$ 
		are in $p_\xi \setminus \xi$;
		\item for all $\xi < \beta$ in $S$ and $i < n$, the sets 
		$\{ x \in p_\xi \restrict \xi : x <_{p_\xi} a_{\alpha,i} \}$ and 
		$\{ x \in p_\beta \restrict \beta : x <_{p_\beta} a_{\beta,i} \}$ are equal.
	\end{enumerate}
	
	For each $\xi \in S$ and $i < n$, let $a_{\xi,i}^*$ denote the minimal member of 
	the tree $p_{\xi} \setminus \xi$ which is less than or equal to $a_{\xi,i}$. 
	Note that $a_{\xi,i}^*$ is minimal in the tree $p_\xi \setminus \xi$. 
	Clearly, for all $\xi < \beta$ in $S$ and $i < n$, the sets 
	$\{ x \in p_\xi \restrict \xi : x <_{p_\xi} a_{\xi,i}^* \}$ and 
	$\{ x \in p_\beta \restrict \beta : x <_{p_\beta} a_{\beta,i}^* \}$ are equal. 
	For all $\xi \in S$ and $i < j < n$, $a_{\xi,i}^*$ is above $c_i$ and 
	$a_{\xi,j}^*$ is above $c_j$, and $c_i \ne c_j$ have the same height. 
	Thus, $a_{\xi,i}^* \ne a_{\xi,j}^*$. 
	In particular, $a_{\xi,i}$ and $a_{\xi,j}$ are not comparable in $p_\xi$.

	Fix $\xi < \beta$ in $S$. 
	Define $y := \{ i < n : g(i) = 1 \}$. 
	Applying Lemma 2.8, we can find $q \le p_\xi, p_\beta$ in $\p$ whose underlying set is 
	the union of the underlying sets of $p_\xi$ and $p_\beta$ such that  
	for all $b \in p_\xi \setminus \xi$ and $c \in p_\beta \setminus \beta$, 
	$b <_{q} c$ iff for some $i \in y$, 
	$b \le_{p_\xi} a_{\xi,i}$ and $a_{\beta,i}^* \le_{p_\beta} c$. 
	In particular, for all $i < n$ with $g(i) = 1$, 
	$a_{\xi,i} <_{q} a_{\beta,i}$. 
	This description of $<_q$ also 
	implies that for all $i < n$ with $g(i) \ne 1$, $a_{\xi,i} \not <_q a_{\beta,i}$.	
	
	Define $f^* := f_\xi \cup f_\beta$. 
	We claim that $(q,f^*)$ is in $\p^*$. 
	It then easily follows that $(q,f^*)$ forces that for all $i < n$, 
	$a_{\xi,i} <_{\dot T} a_{\beta,i}$ iff $g(i) = 1$. 
	We already know that $q \in \p$. 
	Since $f_\xi \restrict \xi = f_\beta \restrict \beta$, $f^*$ is a function of the appropriate form. 
	Now assume that $(\vec c,\vec a)$ and $(\vec c,\vec b)$ are in the domain of $f^*$ and 
	$\vec a <_q \vec b$. 
	We claim that $f^*(\vec c,\vec a) \ne f^*(\vec c,\vec b)$.
	
	If $(\vec c,\vec a)$ and $(\vec c,\vec b)$ are either both in $\dom(f_\xi)$ or both in 
	$\dom(f_\beta)$, then we are done since $(p_\xi,f_\xi)$ and $(p_\beta,f_\beta)$ are conditions. 
	But the only way that this would not be true is if 
	$\vec c$ is in the common part $(p_\xi \restrict \xi)^{n+1} = (p_\beta \restrict \beta)^{n+1}$, 
	$\vec a \in (p_\xi \setminus \xi)^{n+1}$, and $\vec b \in (p_\beta \setminus \beta)^{n+1}$. 
	We will show that this is impossible.
	
	Let $\vec c = (c_0,\ldots,c_n)$, $\vec a = (a_0,\ldots,a_n)$, and $\vec b = (b_0,\ldots,b_n)$. 
	Then $c_i <_q a_i <_q b_i$ for all $i \le n$. 
	Reviewing the definition of $q$, for any $j \le n$ the only way that $b_j$ 
	could be above a member of $p_\xi \setminus \xi$ 
	is that for some $i_j \in y$, $a_{\beta,i_j}^* \le_{p_\beta} b_j$. 
	Since $y$ has size at most $n$, by the pigeon hole principle 
	there must be $j < k < n$ such that $i_j = i_k$. 
	Now $c_j$ and $a_{\beta,i_j}^*$ are both below $b_j$, so they are comparable in $q$. 
	But $h(a_{\beta,i_j}^*) \ge \beta$ and $h(c_j) < \beta$, so $c_j <_q a_{\beta,i_j}^*$. 
	Similarly, $c_k <_q a_{\beta,i_k}^*$. 
	But $i_j = i_k$, so $c_j$ and $c_k$ are below the same node in $q$. 
	This is impossible since $c_j$ and $c_k$ are distinct and have the same height.
\end{proof}

\section{Freeness and entangledness}

Recall from the previous section 
that an $\omega_1$-tree $T$ is $1$-entangled iff it is Suslin, and 
for all $2 \le n < \omega$, if $T$ is $n$-entangled 
then every derived tree of $T$ with dimension $n$ is Suslin. 
In other words, $n$-entangled implies $n$-free. 
In this section we will prove that $n$-entangled is equivalent to $n$-free.

For any tree $T$, let $\p_T$ denote the partial order with underlying set equal to 
the underlying set of $T$, with the partial ordering $b \le_{\p_T} a$ if $a \le_T b$.

\begin{lemma}
	Let $T$ be a Suslin tree. 
	If $A \subseteq T$ is uncountable, then there exists $a \in T$ such that $T_a$ is uncountable and 
	$A$ is dense below $a$ in $\p_T$.
	\end{lemma}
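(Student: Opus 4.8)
The plan is to isolate the set of nodes below which $A$ fails to be dense, show that this set is downward closed and small relative to $A$, and then apply the Suslin property twice to extract a node with both required features.

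First I would set $Z_1 := \{ b \in T : T_b \cap A = \emptyset \}$. Since $b \le_T b'$ implies $T_{b'} \subseteq T_b$, the set $Z_1$ is upward closed in $T$, so its set $Z_1^0$ of minimal elements is an antichain of $T$ and hence countable, because $T$ is Suslin. Unwinding the definition of $\p_T$, one sees that $A$ fails to be dense below a node $b$ exactly when there is some $b' \ge_T b$ with $b' \in Z_1$; let $Z$ denote the set of all such $b$, so that $Z$ is the downward closure of $Z_1$ in $T$. A short argument — any node of $Z \setminus Z_1$ lies strictly below some node of $Z_1^0$, using that the set of predecessors of a node is well-ordered — shows that $Z = Z_1 \cup \bigcup_{b \in Z_1^0} \{ x \in T : x <_T b \}$. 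Because $Z_1^0$ is countable and each set $\{ x : x <_T b \}$ is a chain in an $\omega_1$-tree and therefore countable, the difference $Z \setminus Z_1$ is countable.

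Next I would observe that $T \setminus Z_1 = \{ b \in T : T_b \cap A \neq \emptyset \}$ is downward closed and contains $A$, hence is uncountable; deleting the countably many nodes of $Z \setminus Z_1$ leaves $T \setminus Z$ still uncountable. Moreover $Z$ is downward closed, so $T \setminus Z$ is upward closed. Let $M$ be the set of minimal elements of $T \setminus Z$, which exists since the set of predecessors of any node of $T$ is well-ordered. Then $M$ is an antichain, hence countable by Suslinness, and since $T \setminus Z$ is upward closed we have $T \setminus Z = \bigcup_{a \in M} T_a$. As $T \setminus Z$ is uncountable while $M$ is countable, there is some $a \in M$ with $T_a$ uncountable. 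For that $a$, the statement $a \notin Z$ means precisely that every $b \ge_T a$ has an element of $A$ above it in $T$, i.e.\ $A$ is dense below $a$ in $\p_T$, so $a$ witnesses the lemma.

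The heart of the proof is really the two appeals to ``$T$ has no uncountable antichain'' — applied to $Z_1^0$ and to $M$ — together with the elementary fact that an upward-closed subset of a tree is the union of the cones $T_a$ above its minimal elements. I do not anticipate a genuine obstacle; the only step that requires a little care is verifying that the ``bad set'' $Z$ coincides with the downward closure of $Z_1$ and that $Z \setminus Z_1$ is countable, which is a routine use of the well-foundedness of $<_T$ below a node.
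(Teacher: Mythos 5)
Your proof is correct, and it takes a genuinely different route from the paper's. The paper argues by contradiction: assuming no suitable $a$ exists, it runs a recursion of length $\omega_1$, at stage $\delta$ picking $a_\delta \in A$ of sufficiently large height and then $b_\delta >_T a_\delta$ with no element of $A$ above it, and checks that $\{ b_\delta : \delta < \omega_1 \}$ is an uncountable antichain. You instead give a direct structural argument: you isolate the upward-closed set $Z_1$ of nodes with nothing from $A$ above them, observe that its downward closure $Z$ is exactly the set of nodes below which $A$ fails to be dense, and use the countability of the antichain of minimal elements of $Z_1$ to see that $Z$ adds only countably many nodes to $Z_1$ (so $T \setminus Z$ is still uncountable, as it contains all but countably many points of $A$), and finally cover the upward-closed set $T \setminus Z$ by the countably many cones over its minimal elements. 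Both arguments use the Suslin hypothesis only through ``no uncountable antichain,'' but you apply it to two canonically defined antichains rather than to one built by transfinite recursion. Your version is arguably cleaner to verify and yields slightly more for free: it shows that the set of $a$ witnessing the lemma is upward closed and contains all but countably many elements of $A$, whereas the paper's contradiction argument produces only the bare existence statement. For the purposes of Lemma 5.3, where the lemma is applied, either form suffices.
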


\begin{proof}
	The fact that $T$ is Suslin easily implies that there are at most countably many 
	terminal nodes of $T$. 
	So if the conclusion fails, then for all $a \in T$ with large enough height, 
	there exists $b >_T a$ 
	such that for all $c \in A$, $c \not >_T b$. 
	Define by recursion nodes $a_i$ and $b_i$ all $i < \omega_1$ as follows. 
	Let $\delta < \omega_1$ and assume that 
	$a_i$ and $b_i$ have been defined for all $i < \delta$. 
	Since $A$ is uncountable, choose $a_\delta \in A$ whose height in $T$ is greater 
	than the heights of $a_i$ and $b_i$ for all $i < \delta$. 
	Now choose $b_\delta >_T a_\delta$ such that for all $c \in A$, $c \not >_T b_\delta$. 

	We claim that $\{ b_i : i < \omega_1 \}$ is an antichain, which contradicts that $T$ is Suslin. 
	Consider $i < j$, and assume for a contradiction that $b_i$ and $b_j$ are comparable. 
	Since $\h_T(b_j) > \h_T(a_j) > \h_T(b_i)$, it follows that $b_i <_T b_j$. 
	As $a_j <_T b_j$ and $\h_T(a_j) > \h_T(b_i)$, we have that $b_i <_T a_j$. 
	But $a_j \in A$, so this contradicts the choice of $b_i$.
\end{proof}

\begin{lemma}
	Suppose that $T$ is a normal Aronszajn tree 
	such that $\p_T$ preserves $\omega_1$. 
	If $A \subseteq T$ is uncountable, $a \in T$, and 
	$A$ is dense below $a$ in $\p_T$, 
	then $a$ forces in $\p_T$ that $A$ contains 
	both an uncountable chain and an 
	uncountable antichain of $T$.
\end{lemma}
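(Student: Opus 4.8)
The plan is to force below $a$ with $\p_T$ and analyze the generic branch. Let $G$ be generic for $\p_T$ with $a \in G$, and set $b := \{ c \in T : \exists g \in G\ c \le_T g \}$, the generic branch, which is a downward closed chain through $a$. Since $T$ is normal, clause (b) of normality gives that for each $\gamma < \omega_1$ the set of $c \ge_T a$ with $\h_T(c) \ge \gamma$ is dense below $a$ in $\p_T$; hence $b$ meets every level of $T$ below $\omega_1^V$. Because $\p_T$ preserves $\omega_1$, we have $\omega_1^{V[G]} = \omega_1^V$, so $b$ is a cofinal branch of $T_a$ and $\ot(b) = \omega_1$ in $V[G]$.

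For the uncountable chain, I observe that for each $c \in b$ with $c \ge_T a$, the set $A_c := \{ d \in A : c \le_T d \}$ is dense below $c$ in $\p_T$: given $e \ge_T c$ we have $e \ge_T a$, so density of $A$ below $a$ provides $d \in A$ with $d \ge_T e \ge_T c$. Since $c \in G$ and $A_c \in V$ is dense below $c$, the filter $G$ meets $A_c$, so $A \cap b$ has a node above $c$. Thus $A \cap b$ is cofinal in $b$, and being cofinal in an order of type $\omega_1$ it is an uncountable chain of $T$ contained in $A$. As this holds in every generic extension of $V$ by $\p_T$ containing $a$, the condition $a$ forces that $A$ contains an uncountable chain of $T$.

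For the uncountable antichain, I build in $V[G]$, by recursion of length $\omega_1$, an increasing continuous sequence of heights $\langle \gamma_i : i < \omega_1 \rangle$ together with nodes $d_i \in A$, writing $b_\gamma$ for the node of $b$ of height $\gamma$. Put $\gamma_0 := \h_T(a)$, take suprema at limits (legitimate since $\omega_1$ is still regular), and at stage $i$ use clause (a) of normality at $b_{\gamma_i}$ to obtain two incomparable nodes above it; since $b$ is a chain, at least one such node $s_i$ lies off $b$, and then density of $A$ below $a$ (applied at $s_i \ge_T a$) yields $d_i \in A$ with $d_i \ge_T s_i$, and since $b$ is downward closed and $s_i \notin b$, also $d_i \notin b$. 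Set $\gamma_{i+1} := \h_T(d_i) + 1$. For each $i$, since $d_i \notin b$ but $d_i \ge_T b_{\gamma_i}$, the node $d_i$ leaves the branch at a height $\eta_i$ with $\gamma_i \le \eta_i < \h_T(d_i) < \gamma_{i+1}$, meaning $b_{\eta_i} \le_T d_i$ but $\pr_T(d_i,\eta_i+1) \ne b_{\eta_i+1}$. Then for $i < j$ we have $\eta_i + 1 \le \gamma_{i+1} \le \gamma_j$, so $d_j \ge_T b_{\gamma_j} \ge_T b_{\eta_i+1}$, whence $\pr_T(d_j,\eta_i+1) = b_{\eta_i+1} \ne \pr_T(d_i,\eta_i+1)$, and $d_i$, $d_j$ are incomparable. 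So $\{ d_i : i < \omega_1 \}$ is an uncountable antichain of $T$ contained in $A$; again this is forced by $a$.

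I expect the main obstacle to be the antichain construction, specifically ensuring that the recursively chosen $d_i$ genuinely form an antichain: this forces the bookkeeping on the heights $\gamma_i$ (each $\gamma_{i+1}$ must overshoot $\h_T(d_i)$ so that later $d_j$ agree with the branch past where $d_i$ splits off) and the use of normality to produce, above every node of the generic branch, a node lying off that branch. The chain argument and the verification that $b$ is a cofinal branch of order type $\omega_1$ are routine once $\omega_1$-preservation and normality are invoked.
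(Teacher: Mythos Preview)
Your proof is correct and follows the same overall architecture as the paper's: force below $a$, take $A \cap G$ as the uncountable chain, and build the antichain from nodes of $A$ that split off the generic branch at increasing heights.

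The one genuine difference lies in the antichain step. The paper first proves an auxiliary claim in $V$: every element of $A$ above $a$ has two incomparable extensions \emph{in $A$} (this is where the Aronszajn hypothesis is used---if the claim failed at some $b$, then $\{c \in A : b <_T c\}$ would be a chain, and it is forced uncountable, contradicting Aronszajn). Then in $V[G]$, for each $b \in G \cap A$ one of those two extensions lies off $G$, and that node is the $c_b$. You instead invoke normality clause (a) to split \emph{in $T$} above each branch node $b_{\gamma_i}$, pick the incomparable successor $s_i$ off the branch, and only then use density of $A$ to extend $s_i$ to $d_i \in A$. This bypasses the paper's auxiliary claim entirely and, as a consequence, never uses the Aronszajn hypothesis at all---your argument goes through for any normal $\omega_1$-tree with $\p_T$ preserving $\omega_1$. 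The paper's route has the small conceptual payoff of isolating a combinatorial fact about $A$ itself that holds already in $V$, but your route is slightly shorter and marginally more general.
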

	
	\begin{proof}
	We first claim that $a$ forces that $\dot G \cap A$ is uncountable. 
	Otherwise there exists $b \ge_T a$ and $\gamma < \omega_1$ such that $b$ forces 
	that $\dot G \cap A \subseteq T \restrict \gamma$. 
	Since $T$ is normal, fix $c >_T b$ with $\h_T(c) \ge \gamma$. 
	As $A$ is dense below $a$, fix $d >_T c$ in $A$. 
	Then $d$ forces that $d \in \dot G \cap A$, which contradicts the choice of $\gamma$.
	
	Secondly, we claim that every member of $A$ which is above $a$ in $T$ has two incomparable 
	nodes above it in $A$. 
	If not, then there exists $b \in A$ above $a$ 
	such that the set $\{ c \in A : b <_T c \}$ is a chain. 
	Now $b$ forces that $\dot G \cap A$ is uncountable by the first claim. 
	And $b$ forces that every member of $\dot G \cap A$ with height greater than $\h_{T}(b)$ is above $b$. 
	It follows that $b$ forces that the set $\{ c \in A : b <_T c \}$ contains the uncountable set 
	$\dot G \cap A$. 
	Hence, the chain $\{ c \in A : b <_T c \}$ is in fact uncountable, contradicting that $T$ is an 
	Aronszajn tree.

	Now we are ready to prove the lemma. 
	Let $G$ be a generic filter on $\p_T$ which contains $a$. 
	We will show that in $V[G]$, $A$ contains an uncountable chain and an uncountable antichain. 
	Now $G$ is a chain of $T$, and $G \cap A$ is uncountable by the first paragraph. 
	So $G \cap A$ is an uncountable chain which is a subset of $A$, which proves the first part. 
	Since $G$ is a chain, 
	the claim in the previous paragraph easily implies that for each $b \in G \cap A$ above $a$, 
	there exists $c_b >_T b$ such that $c_b$ is in $A \setminus G$. 
	Now we can recursively define a sequence $\langle b_i : i < \omega_1 \rangle$ satisfying:
	\begin{enumerate}
		\item for all $i < \omega_1$, $b_i >_T a$ and $b_i \in G \cap A$;
		\item for all $\delta < \omega_1$, for all $i < \delta$, $\h_T(b_\delta) > \h_T(c_{b_i})$.
		\end{enumerate}
	
	We claim that $\{ c_{b_i} : i < \omega_1 \}$ is an 
	uncountable antichain contained in $A$. 
	Suppose for a contradiction that there are $i < j < \omega_1$ such that 
	$c_{b_i}$ and $c_{b_j}$ are comparable. 
	Since $b_j <_T c_{b_j}$ and 
	$\h_T(c_{b_i}) < \h_T(b_j)$, $c_{b_i} <_T c_{b_j}$, 
	and hence also $c_{b_i} <_T b_j$. 
	Since $b_j \in G$ and $b_j \le_{\p_T} c_{b_i}$, 
	$c_{b_i} \in G$, 
	which contradicts the choice of $c_{b_i}$.
	\end{proof}

\begin{lemma}
	Let $T$ be a normal $\omega_1$-tree. 
	Let $\vec c \in T^n$ be injective and assume that $T_{\vec c}$ is Suslin. 
	Suppose that $\langle \vec a_i : i < \omega_1 \rangle$ is a sequence of 
	$n$-tuples above $\vec c$ with increasing height. 
	Let $f, g : n \to 2$ be the functions such that $f(i) = 1$ and $g(i) = 0$ for all $i < n$. 

	Then the sequence $\langle \vec a_i : i < \omega_1 \rangle$ realizes both $f$ and $g$.
	Moreover, there exists $\vec d \in T_{\vec c}$ 
	such that $\vec d$ forces in $\p_{T_{\vec c}}$ 
	that there exist uncountable sets 
	$X, Y \subseteq \omega_1$ 
	such that for all $\xi < \beta$ in $X$, 
	$\vec a_\xi$ and $\vec a_\beta$ satisfy the type $f$, and 
	for all $\gamma < \delta$ in $Y$, $\vec a_\gamma$ and $\vec a_\delta$ satisfy the type $g$.
	\end{lemma}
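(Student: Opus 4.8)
The plan is to reduce to the derived tree $T_{\vec c}$, apply Lemmas 5.1 and 5.2, and then upgrade the resulting antichain to a \emph{coordinate-wise} antichain by an auxiliary recursion. A preliminary point: the tuples $\vec a_i$ need not lie in $T^n$, since their coordinates may have different heights. So first, using the normality of $T$, I would fix for each $i$ a tuple $\hat{\vec a}_i = (\hat a_{i,0},\dots,\hat a_{i,n-1}) \in T^n$ of height $\nu_i := \max_{k<n}\h_T(a_{i,k})$ with $a_{i,k} \le_T \hat a_{i,k}$ for every $k$. Then $\hat{\vec a}_i \in T_{\vec c}$; the map $i \mapsto \hat{\vec a}_i$ is injective, since the $\nu_i$ are strictly increasing because the sequence has increasing height; and the key observation is that if $\hat{\vec a}_i$ lies on a cofinal branch $G$ of $T_{\vec c}$, then (as a branch meets each level in one node) $\hat{\vec a}_i$ is the height-$\nu_i$ node of $G$, so each $a_{i,k} = \pr_T(\hat a_{i,k}, \h_T(a_{i,k}))$ lies on the $k$-th coordinate branch of $G$. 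Set $A := \{\hat{\vec a}_i : i < \omega_1\}$, an uncountable subset of the Suslin tree $T_{\vec c}$. By Lemma 5.1 fix $\vec d \in T_{\vec c}$ with $(T_{\vec c})_{\vec d}$ uncountable and $A$ dense below $\vec d$ in $\p_{T_{\vec c}}$. Since $T^n$ is normal and derived trees of normal trees are normal, $T_{\vec c}$ is normal; it is Aronszajn and $\p_{T_{\vec c}}$ is $\omega_1$-c.c.\ because $T_{\vec c}$ is Suslin; so Lemma 5.2 applies to $\vec d$ and $A$. The claim will be that this $\vec d$ works.

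Next I would work in $V[G]$ for $G$ generic on $\p_{T_{\vec c}}$ with $\vec d \in G$, writing $B_k$ for the branch of $T$ obtained as the $k$-th coordinate projection of $G$. By the proof of Lemma 5.2, $G \cap A$ is uncountable; write $G \cap A = \{\hat{\vec a}_i : i \in X\}$ with $X$ uncountable. For $i \in X$ the observation above gives $a_{i,k} \in B_k$ for all $k$, so for $i < j$ in $X$ and each $k$ the nodes $a_{i,k}, a_{j,k}$ lie on the chain $B_k$ at strictly increasing heights, whence $a_{i,k} <_T a_{j,k}$; thus $\vec a_i$ and $\vec a_j$ satisfy the type $f$, and $X$ is as required for $f$.

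For $g$ one needs an uncountable $Y$ such that $\{\vec a_\gamma : \gamma \in Y\}$ is incomparable in \emph{every} coordinate, which is strictly stronger than being an antichain of $T_{\vec c}$; this is the heart of the argument, and I would obtain it by a recursion in $V[G]$ modelled on the antichain construction inside Lemma 5.2. At stage $\zeta$: choose $i_\zeta \in X$ with $\nu_{i_\zeta}$ above all ordinals named so far; choose a height $\theta_\zeta$, above all ordinals named so far, at which every $\hat a_{i_\zeta,k}$ has at least two extensions (normality of $T$); for each $k$ let $w_{\zeta,k}$ be such an extension different from the height-$\theta_\zeta$ node of $B_k$, so that neither $w_{\zeta,k}$ nor any node above it lies on $B_k$; put $\vec w_\zeta := (w_{\zeta,0},\dots,w_{\zeta,n-1}) \in T_{\vec c}$, which lies above $\hat{\vec a}_{i_\zeta}$ and hence above $\vec d$; and, using density of $A$ below $\vec w_\zeta$ together with the strict monotonicity of $j \mapsto \nu_j$ and of $j \mapsto \min_{k<n}\h_T(a_{j,k})$, choose $\sigma(\zeta)$ with $\hat{\vec a}_{\sigma(\zeta)} \ge_{T^n} \vec w_\zeta$ and $\min_{k<n}\h_T(a_{\sigma(\zeta),k}) \ge \theta_\zeta$. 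Put $Y := \{\sigma(\zeta) : \zeta < \omega_1\}$. Then $\sigma$ is increasing and the tuples $\vec a_{\sigma(\zeta)}$ have increasing height (for $\eta < \zeta$ every coordinate of $\vec a_{\sigma(\eta)}$ has height $\le \nu_{\sigma(\eta)} < \theta_\zeta \le \min_k \h_T(a_{\sigma(\zeta),k})$); and a direct $\pr_T$-computation shows that for $\eta < \zeta$, if $a_{\sigma(\eta),k} <_T a_{\sigma(\zeta),k}$ for some $k$ then $a_{\sigma(\eta),k}$ would be an initial segment of $\hat a_{i_\zeta,k}$, hence on $B_k$, contradicting that $a_{\sigma(\eta),k} \ge_T w_{\eta,k}$ lies off $B_k$. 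Thus $\vec a_{\sigma(\eta)}$ and $\vec a_{\sigma(\zeta)}$ satisfy $g$, and $Y$ is as required for $g$.

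Finally, the ``moreover'' clause yields the first assertion: $\p_{T_{\vec c}}$ preserves $\omega_1$, the statements ``$\langle\vec a_i\rangle$ realizes $f$'' and ``$\langle\vec a_i\rangle$ realizes $g$'' are absolute between $V$ and $V[G]$ with all parameters in $V$, and $\vec d$ forces each of them (take two indices in $X$, respectively $Y$); so both hold in $V$. The main obstacle is the $g$-part: an antichain of $T_{\vec c}$ records only one coordinate of incomparability, whereas realizing the all-$0$ type requires incomparability in every coordinate, so one must steer each coordinate off the generic branch separately — which is exactly where the normality of $T$ and the bookkeeping of the possibly unequal coordinate heights of the $\vec a_i$ are used.
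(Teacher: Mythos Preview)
Your proof is correct, but it takes a genuinely different route from the paper's. The paper argues by induction on $n$: the base case $n=1$ is exactly Lemmas 5.1 and 5.2, and for the inductive step one applies the hypothesis to $T_{\vec c \restrict n}$ to obtain a condition $\vec d^*$ forcing uncountable sets $\dot X^*, \dot Y^*$ that work for the first $n$ coordinates; then one forces with $\p_{T_{\vec c \restrict n}}$, invokes Lemma 1.1 to see that $T_{c_n}$ remains Suslin in the extension, and applies Lemmas 5.1 and 5.2 there to the last coordinate to refine $\dot X^*$ and $\dot Y^*$. Your direct argument avoids both the induction and Lemma 1.1 by working in $T_{\vec c}$ all at once; the cost is the hand-built recursion for the all-$0$ type, where you must steer \emph{every} coordinate off its generic branch simultaneously. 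The paper's induction sidesteps this issue because at each stage it only needs incomparability in a single $T_{c_k}$, so the antichain produced by Lemma 5.2 already suffices. Your approach is more self-contained and more elementary; the paper's is more modular and reuses Lemma 1.1, which it needs anyway for Theorem 5.4.
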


\begin{proof}
	Note that the second conclusion is stronger than the first, since satisfying a type 
	is absolute between models of set theory. 
	We will prove the lemma by induction on $n$. 
	For the base case, let $c \in T$ and assume that $T_c$ is Suslin. 
	Let $\langle a_i : i < \omega_1 \rangle$ be a sequence 
	of nodes with increasing height above $c$. 
	By Lemma 5.1, we can find $d \in T_c$ such that $\{ a_i : i < \omega_1 \}$ is dense below $d$ in $\p_{T_c}$. 
	By Lemma 5.2, $d$ forces in $\p_{T_c}$ that 
	$\{ a_i : i < \omega_1 \}$ contains an uncountable chain and an uncountable antichain. 
	But any two distinct members of a chain will satisfy $f$, and any two distinct members of 
	an antichain will satisfy $g$.
	
	For the inductive step, fix $1 \le n < \omega$ and assume that the desired property holds for $n$. 
	Let $\vec c = (c_0,\ldots,c_n)\in T^{n+1}$ be injective and assume that $T_{\vec c}$ is Suslin. 
	Consider a sequence of $(n+1)$-tuples  
	$\langle \vec a_i : i < \omega_1 \rangle$ with increasing height above $\vec c$.

	Now $T_{\vec c \restrict n}$ is Suslin and $\langle \vec a_i \restrict n : i < \omega_1 \rangle$ is a 
	sequence of $n$-tuples with increasing height above $\vec c \restrict n$. 
	By the inductive hypothesis, there exists $\vec d^* \in T_{\vec c \restrict n}$ and names $\dot X^*$ 
	and $\dot Y^*$ for uncountable subsets of $\omega_1$ 
	such that $\vec d^*$ forces 
	in $\p_{T_{\vec c \restrict n}}$ that 
	for all $\xi < \beta$ in $\dot X^*$, 
	$\vec a_\xi \restrict n$ and $\vec a_\beta \restrict n$ satisfy the type $f \restrict n$, 
	and for all $\gamma < \delta$ in $\dot Y^*$, 
	$\vec a_\gamma \restrict n$ and $\vec a_\delta \restrict n$ 
	satisfy the type $g \restrict n$.
	
	Let $G^*$ be a generic filter on $T_{\vec c \restrict n}$ which contains $\vec d^*$. 
	Let $X^* := (\dot X^*)^G$ and $Y^* := (\dot Y^*)^G$. 
	Since $T_{\vec c}$ is Suslin, $T_{c_n}$ remains a Suslin tree in $V[G^*]$ by Lemma 1.1. 
	Write $\vec a_\xi = (a_{\xi,0},\ldots,a_{\xi,n})$ for all $\xi < \omega_1$. 
	Now $\{ a_{\xi,n} : \xi \in X^* \}$ is an uncountable subset of $T_{c_{n}}$. 
	By Lemmas 5.1 and 5.2 applied in $V[G^*]$, 
	we can find $d >_T c_{n}$ and names $\dot X'$ and $\dot Y'$  
	for uncountable subsets of $X^*$ and $Y^*$ respectively such that $d$ 
	forces in $\p_{T_{c_{n}}}$ over $V[G^*]$ that 
	$\{ a_{\xi,n} : \xi \in \dot X' \}$ is a chain and 
	$\{ a_{\gamma,n} : \gamma \in \dot Y' \}$ is an antichain. 
	Then $d$ forces that if $\xi < \beta$ are in $\dot X'$, 
	then they are in $X^*$ and hence 
	$\vec a_{\xi} \restrict n$ and $\vec a_{\beta} \restrict n$ 
	satisfy the type $f \restrict n$, which is equivalent to saying that 
	$\vec a_{\xi} \restrict n <_{T_{\vec c \restrict n}} \vec a_{\beta} \restrict n$; 
	moreover, $a_{\xi,n} <_{T_{c_{n}}} a_{\beta,n}$. 
	So altogether, $d$ forces that $\vec a_\xi <_{T_{\vec c}} \vec a_{\beta}$, and hence 
	that $\vec a_\xi$ and $\vec a_\beta$ satisfy the type $f$. 
	A similar argument shows that $d$ forces that 
	if $\gamma < \delta$ are in $\dot Y'$, then $\vec a_\gamma$ and $\vec a_\delta$ satisfy the type $g$. 
	Now choose $\p_{T_{\vec c}}$-names $\dot X$ and $\dot Y$ for $\dot X'$ and $\dot Y'$ respectively 
	and a condition $\vec d = (d_0,\ldots,d_n)$ in $\p_{T_{\vec c}}$ with 
	$\vec d^* \le_{T^n} (d_0,\ldots,d_{n-1})$ and $d \le_T d_n$ 
	which forces the above information about $\dot X$ and $\dot Y$.
	\end{proof}

\begin{thm}
	Let $T$ be a normal Suslin tree. 
	Then for all $2 \le n < \omega$, $T$ is $n$-entangled iff $T$ is $n$-free. 
	In particular, $T$ is entangled iff $T$ is free.
	\end{thm}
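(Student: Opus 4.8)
The forward direction is already established in the discussion preceding Theorem~4.4: if $T$ is $n$-entangled then every derived tree of $T$ of dimension $n$ is Suslin, i.e.\ $T$ is $n$-free. So the plan is to prove the reverse implication. Assume $T$ is $n$-free. By Lemma~4.3 it suffices to fix an injective $\vec c = (c_0,\ldots,c_{n-1}) \in T^n$, a sequence $\langle \vec a_i : i < \omega_1 \rangle$ of $n$-tuples above $\vec c$ with increasing height, and a type $g : n \to 2$, and to produce $\xi < \beta < \omega_1$ such that $\vec a_\xi$ and $\vec a_\beta$ satisfy $g$. Set $y := \{ i < n : g(i) = 1 \}$ and $z := n \setminus y$. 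Since $T$ is $n$-free it is $m$-free for every $m \le n$, and every subtuple of an injective tuple is injective, so the derived trees $T_{\vec c}$, $T_{\vec c \restrict y}$, and $T_{\vec c \restrict z}$ (the latter two when the relevant index set is nonempty) are all Suslin. If $y = \emptyset$ or $y = n$ the type $g$ is constant and the required pair is produced directly by Lemma~5.3 applied to $\vec c$ and $\langle \vec a_i : i < \omega_1 \rangle$, so I may assume $\emptyset \ne y \subsetneq n$.

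The idea is to control the coordinates in $y$ and in $z$ in two stages, the second carried out inside a generic extension. First I would apply Lemma~5.3 to the Suslin tree $T_{\vec c \restrict y}$ and the sequence $\langle \vec a_i \restrict y : i < \omega_1 \rangle$ of $|y|$-tuples above $\vec c \restrict y$ with increasing height; using the ``moreover'' clause with the all-ones type, this yields a condition $\vec d \in T_{\vec c \restrict y}$ and a $\p_{T_{\vec c \restrict y}}$-name $\dot X$ for an uncountable subset of $\omega_1$ such that $\vec d$ forces that for all $\xi < \beta$ in $\dot X$ one has $\vec a_\xi \restrict y <_{T_{\vec c \restrict y}} \vec a_\beta \restrict y$, i.e.\ $a_{\xi,i} <_T a_{\beta,i}$ for all $i \in y$. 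Next I would pass to a generic filter $G$ over $V$ for $\p_{T_{\vec c \restrict y}}$ with $\vec d \in G$, and set $X := \dot X^G$, an uncountable subset of $\omega_1 = \omega_1^{V[G]}$. Reordering the coordinates of $\vec c$ so that those indexed by $y$ precede those indexed by $z$, Lemma~1.1 (whose hypothesis that $T_{\vec c}$ is Suslin holds) gives that $T_{\vec c \restrict z}$ is still Suslin in $V[G]$. The sequence $\langle \vec a_i \restrict z : i \in X \rangle$, reindexed by the increasing enumeration of $X$, is a sequence of $|z|$-tuples above $\vec c \restrict z$ with increasing height, so Lemma~5.3 applied inside $V[G]$ produces $\xi < \beta$ in $X$ with $a_{\xi,i} \not<_T a_{\beta,i}$ for all $i \in z$. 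Since $\xi,\beta \in X$ we also have $a_{\xi,i} <_T a_{\beta,i}$ for all $i \in y$, so $\vec a_\xi$ and $\vec a_\beta$ satisfy $g$ in $V[G]$; because $T_{\vec c \restrict y}$ is Suslin the forcing $\p_{T_{\vec c \restrict y}}$ preserves $\omega_1$, hence $\xi,\beta$ already lie in $V$, and the statement ``$\vec a_\xi$ and $\vec a_\beta$ satisfy $g$'' is about the ground-model tree $T$ and so is absolute. Thus the pair works in $V$, proving that $T$ is $n$-entangled. The ``in particular'' clause then follows, since an $\omega_1$-tree is $1$-entangled iff it is Suslin iff it is $1$-free, entangled means $n$-entangled for all $n \ge 1$, and free means $n$-free for all $n \ge 1$.

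The step I expect to be the main obstacle is not combinatorial but organizational: checking that the relevant derived trees $T_{\vec c}$, $T_{\vec c \restrict y}$, $T_{\vec c \restrict z}$ (and the last one as computed in $V[G]$) are normal $\omega_1$-trees so that Lemmas~5.1, 5.2, 5.3 and 1.1 genuinely apply, and executing the absoluteness bookkeeping so that the pair $\xi < \beta$ found in $V[G]$ is recognized as lying in $V$ — this is exactly where $\omega_1$-preservation, a consequence of $T_{\vec c \restrict y}$ being Suslin, is needed. No new difficulty beyond what is already packaged into Lemma~5.3 should arise.
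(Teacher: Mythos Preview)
Your proposal is correct and follows essentially the same route as the paper: split the coordinates according to $g$, use Lemma~5.3 on the ``$1$'' part, force with that derived tree, invoke Lemma~1.1 to keep the ``$0$'' part Suslin in the extension, and apply Lemma~5.3 again there, with absoluteness bringing the witnessing pair back to $V$. The only cosmetic differences are that the paper first permutes the coordinates so that $y=\{0,\ldots,m-1\}$ before running the argument (you work directly with arbitrary $y$ and $z$), and the paper forces a second time to obtain an uncountable set of good pairs whereas you stop after one application of Lemma~5.3 in $V[G]$ since a single pair suffices.
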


\begin{proof}
	We already know that $n$-entangled implies $n$-free. 
	Let $2 \le n < \omega$ and assume that $T$ is $n$-free. 
	We will prove that $T$ is $n$-entangled. 
	Suppose that $\langle \vec a_i : i < \omega_1 \rangle$ is a sequence of 
	$n$-tuples with increasing height above some injective 
	$\vec c = (c_0,\ldots,c_{n-1})$ in $T^n$. 
	Let $g : n \to 2$. 
	We will find $\xi < \beta < \omega_1$ such that $\vec a_\xi$ and $\vec a_\beta$ satisfy $g$. 
	If $g$ is either constant $1$ or constant $0$, then we are done by Lemma 5.3, so assume not.

	Fix positive integers $m$ and $k$ such that $n = m + k$ and the set 
	$\{ i < n : g(i) = 1 \}$ has size $m$. 
	We first prove the statement in the simpler case that for all $i < m$, $g(i) = 1$, and for all 
	$m \le i < n$, $g(i) = 0$. 
	Since the property of a pair satisfying the type $g$ is absolute between models of set theory, 
	it suffices to find a generic extension of $V$ by a c.c.c.\ forcing in which 
	there exists an uncountable $X \subseteq \omega_1$ such that for all $\xi < \beta$ in $X$, 
	$\vec a_\xi$ and $\vec a_\beta$ satisfy $g$. 
	By Lemma 5.3, we can find $\vec d \in T_{\vec c \restrict m}$ and a 
	$\p_{T_{\vec c \restrict m}}$-name $\dot Y$ such that $\vec d$ forces that $\dot Y$ is 
	an uncountable subset of $\omega_1$ and for all $\xi < \beta$ in $\dot Y$, 
	$\vec a_{\xi} \restrict m$ and $\vec a_{\beta} \restrict m$ satisfy the constant $1$ function.
	
	Fix a generic filter $G$ on $\p_{T_{\vec c \restrict m}}$ which contains $\vec d$. 
	Let $\vec c \setminus m := (c_m,\ldots,c_{n-1})$. 
	Then in $V[G]$, $T_{\vec c \setminus m}$ is Suslin by Lemma 1.1. 
	And $\langle (a_{\xi,m},\ldots,a_{\xi,n-1}) : \xi \in \dot Y \rangle$ 
	is a sequence of $k$-tuples with increasing height above 
	$(c_m,\ldots,c_{n-1})$. 
	Applying Lemma 5.3 in $V[G]$, fix a $V[G]$-generic filter $H$ on $\p_{T_{\vec c \setminus m}}$ 
	and an uncountable set $X \subseteq Y$ in $V[G][H]$ so that for all $\xi < \beta$ in $X$, 
	$(a_{\xi,m},\ldots,a_{\xi,n-1})$ and $(a_{\beta,m},\ldots,a_{\beta,n-1})$ satisfy 
	the constant $0$ type. 
	Then in $V[G][H]$, for all $\xi < \beta$ in $X$, $\vec a_\xi$ and $\vec a_\beta$ 
	satisfy the type $g$.
	
	In general, fix a bijection $\sigma : n \to n$ so that $\sigma[m] = \{ i < n : g(i) = 1 \}$. 
	Define $g^* : n \to 2$ by $g^* := g \circ \sigma$. 
	Then $g^*$ satisfies that for all $i < m$, $g^*(i) = 1$, and for all $m \le i < n$, $g^*(i) = 0$. 
	Let $\vec e = (e_0,\ldots,e_{n-1})$ satisfy that for all $i < n$, 
	$e_i := c_{\sigma(i)}$. 
	For all $\xi < \omega_1$, let 
	$\vec b_\xi = (b_{\xi,0},\ldots,b_{\xi,n-1})$ be the $n$-tuple such that for all $i < n$, 
	$b_{\xi,i} := a_{\xi,\sigma(i)}$. 
	Then $\langle \vec b_\xi : \xi < \omega_1 \rangle$ is a pairwise disjoint sequence of $n$-tuples 
	with increasing height above $\vec e$. 
	By the previous paragraph, there exist $\xi < \beta < \omega_1$ such that $\vec b_\xi$ and $\vec b_\beta$ 
	satisfy the type $g^*$. 
	Unwinding the above definitions, 
	it is easy to check that $\vec a_\xi$ and $\vec a_\beta$ satisfy the type $g$.
	\end{proof}

This last theorem together with Theorem 4.4 implies the next result.

\begin{corollary}
	For all $1 \le n < \omega$, there exists an $\omega_1$-Knaster forcing which adds 
	a normal $n$-free Suslin tree such that 
	all derived trees of $T$ of dimension $n+1$ are special. 
	In particular, $n$-free does not imply $(n+1)$-free.
	\end{corollary}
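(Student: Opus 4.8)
The plan is to combine Theorem 4.4 with Theorem 5.4, so that the corollary becomes essentially immediate. Fix $1 \le n < \omega$ and let $\p^*$ be the $\omega_1$-Knaster forcing provided by Theorem 4.4. In $V^{\p^*}$ there is a normal Suslin tree $T$, built as a modification of the generic tree of the forcing $\p$ from Definition 2.5, which is $n$-entangled and all of whose derived trees of dimension $n+1$ are special. I claim that this same $T$ witnesses the corollary, so the remaining work is just to line up the hypotheses and conclusions of the two theorems.

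First I would argue that $T$ is $n$-free. If $n = 1$ this is automatic, since $T$ is Suslin and a $1$-free $\omega_1$-tree is by definition exactly a Suslin tree. If $n \ge 2$, then $T$ is a normal Suslin tree which is $n$-entangled, so Theorem 5.4 applies and yields that $T$ is $n$-free, i.e.\ every derived tree of $T$ of dimension $n$ is Suslin. Next, for the failure of $(n+1)$-freeness, I would observe that each level $\alpha$ of $T$ is $\q_\alpha$, which is infinite, so $T^{n+1}$ has injective members and hence $T$ really does have derived trees of dimension $n+1$. Every such tree is special by the choice of $\p^*$, and a special $\omega_1$-tree is never Suslin, so $T$ is not $(n+1)$-free. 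Since this situation is realized in a generic extension of an arbitrary model of ZFC, it is consistent that there is a normal $n$-free Suslin tree which is not $(n+1)$-free; in particular, $n$-free does not imply $(n+1)$-free.

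There is no real obstacle to overcome here: the content is already packaged in Theorems 4.4 and 5.4. The only points requiring a word of care are the degenerate case $n = 1$, where Theorem 5.4 is not invoked and one instead uses directly that a $1$-free tree is the same thing as a Suslin tree, and the trivial remark that injective $(n+1)$-tuples exist in $T^{n+1}$, so that the class of derived trees of dimension $n+1$ over which specialness is asserted is in fact nonempty.
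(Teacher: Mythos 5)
Your proposal is correct and is exactly the paper's argument: the paper derives the corollary by combining Theorem 4.4 with Theorem 5.4, just as you do, and your extra care with the case $n=1$ (where $1$-free simply means Suslin) and with the nonemptiness of the set of derived trees of dimension $n+1$ only makes the implicit details explicit.
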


We remark that G.\ Scharfenberger-Fabian previously proved that $n$-free does not imply 
$(n+1)$-free using a different argument (\cite[Corollary 3.5]{gido}), which solved 
an open problem of \cite{hamkins}.

\bibliographystyle{plain}
\bibliography{paper37}

\end{document}